\newcommand{\R}[0]{\mathbb{R}}
\newcommand{\T}[0]{\mathbb{T}}
\newcommand{\ta}[0]{\theta}
\newcommand{\bs}[0]{\setminus}
\newcommand{\vep}[0]{\varepsilon}
\newcommand{\supp}[0]{\operatorname{supp}}
\newcommand{\lsm}[0]{\lesssim}
\newcommand{\om}[0]{\omega}
\newcommand{\vp}[0]{\varphi}
\newcommand{\wh}[1]{\widehat{#1}}
\newcommand{\wc}[1]{\widecheck{#1}}
\newcommand{\mc}[1]{\mathcal{#1}}
\newcommand{\ov}[1]{\overline{#1}}
\newcommand{\wt}[1]{\widetilde{#1}}
\newcommand{\st}[1]{\substack{#1}}
\newcommand{\mb}[1]{\mathbf{#1}}
\newcommand{\nms}[1]{\| #1 \|}
\newtheorem{thm}{Theorem}
\newtheorem{lemma}[thm]{Lemma}
\newtheorem{prop}[thm]{Proposition}
\newtheorem{cor}[thm]{Corollary}
\theoremstyle{remark}
\title{Mixed norm $l^2$ decoupling for paraboloids}
\author{Shival Dasu}
\address{Department of Mathematics, Indiana University Bloomington, Bloomington, IN 47405, USA}
\email{sdasu@iu.edu}
\author{Hongki Jung}
\address{Department of Mathematics, Indiana University Bloomington, Bloomington, IN 47405, USA}
\email{jung11@iu.edu}
\author{Zane Kun Li}
\address{Department of Mathematics, University of Wisconsin-Madison, Madison, WI 53706, USA}
\email{zkli@wisc.edu}
\author{Jos\'e Madrid}
\address{Section de Math\'ematiques, Universit\'e de Gen\`eve,  Geneva, Switzerland}
\email{Jose.Madrid@unige.ch}
\begin{document}
\begin{abstract}
We prove the sharp mixed norm $(l^2, L^{q}_{t}L^{r}_{x})$ decoupling estimate for the paraboloid in $d + 1$ dimensions.
\end{abstract}
\maketitle
\section{Introduction}
For $\delta \in (0, 1)$ and $\om \in \R^d$, 
define the parallelpiped
\begin{align*}
\ta_{\om} := \{(\xi_1, \ldots, \xi_{d}, \eta) \in \R^{d + 1}: |\xi_i - \om_i| \leq \delta \text{ for } i = 1, 2, \ldots, d,\, |\eta - 2\om \cdot (\xi -\om) - |\om|^2| \leq 2d\delta^2\}.
\end{align*}
Sometimes to emphasize the dependence of $\ta_{\om}$ on $\delta$, we will
write $\ta_{\om, \delta}$ instead.
Note that $\ta_{\om}$ contains the $\delta^{2}$ neighborhood of the piece of paraboloid
above $\prod_{i = 1}^{d}[\om_{i}, \om_{i} + \delta)$ given by 
\begin{align*}
\{(\xi_1, \ldots, \xi_d, \eta) \in \R^{d + 1}: \xi_{i} \in [\om_{i}, \om_{i} + \delta) \text{ for } i = 1, 2, \ldots, d,\, |\eta - |\xi|^2| \leq \delta^2\}
\end{align*}
since if $|\eta - |\xi|^2| \leq \delta^2$, then $|\eta - 2\om \cdot (\xi - \om) - |\om|^2| \leq |\eta - |\xi|^2| + |\xi - \om|^{2}  \leq 2d\delta^2$.
The parallelpiped $\ta_{\om}$ is comparable to a $\delta \times \cdots \times \delta \times \delta^{2}$ rectangular box and is essentially the $O(\delta^{2})$ neighborhood of the piece of paraboloid living above $B(\om, \delta)$.

Next, for a $\ta \subset \R^{d+1}$, let $P_{\ta}f := (\wh{f}1_{\ta})^{\vee}$. Let $\Xi$ be a $\delta$-separated subset of $[-1, 1]^{d}$.
For $1 \leq q, r \leq \infty$, let $D_{q, r}(\delta, \Xi)$ be the best constant such that
\begin{align*}
\nms{\sum_{\om \in \Xi}P_{\ta_{\om}}f}_{L^{q}_{t}L^{r}_{x}(\R^d \times \R)} \leq D_{q, r}(\delta, \Xi)(\sum_{\om \in \Xi}\nms{P_{\ta_{\om}}f}_{L^{q}_{t}L^{r}_{x}(\R^d \times \R)}^{2})^{1/2}
\end{align*}
for all $f$ with Fourier transform supported in $\bigcup_{\om \in \Xi}\ta_{\om}$. Finally, we define
\begin{align}\label{sharpdecoupling}
D_{q, r}(\delta) := \sup_{\Xi}D_{q, r}(\delta, \Xi)
\end{align}
where the supremum is taken over all $\delta$-separated subsets $\Xi$ of $[-1, 1]^d$.
The way we defined $\ta_{\om}$ and the decoupling constant here was inspired
by a formulation due to Tao \cite{247B}.
By Plancherel, $D_{2, 2}(\delta) \lsm 1$.
Bourgain and Demeter's sharp $(l^{2}, L^{p}_{x, t})$ paraboloid decoupling theorem \cite{BD14} implies that
when $2 \leq p \leq \frac{2(d + 2)}{d}$, one has $D_{p, p}(\delta) \lsm_{p, \vep} \delta^{-\vep}$.
After \cite{BD14}, it was asked if there were mixed norm decoupling theorems,
see for example \cite{Barron, BBGL, YZ, MO}.
To this end, we give the following sharp mixed norm $(l^{2}, L^{q}_{t}L^{r}_{x})$
decoupling for the paraboloid:
\begin{thm}\label{main}
For every $\vep > 0$,  we have
\begin{align}\label{mainbd}
D_{q, r}(\delta) \lsm_{q, r, \vep} \delta^{-\vep}
\end{align}
for all $\delta \in (0, 1)$ if and only if
\begin{align}\label{qrregion}
2 \leq q \leq \infty, \quad 2  \leq r \leq \frac{2(d + 2)}{d}, \quad \text{and}\quad \frac{2}{q} + \frac{d}{r}  \geq \frac{d}{2}.
\end{align} 
\end{thm}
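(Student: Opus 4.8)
The plan is to obtain the ``if'' direction by interpolating a short list of extremal estimates, and the ``only if'' direction by explicit examples. Throughout, set $p_c:=\frac{2(d+2)}{d}$. In the coordinates $(1/q,1/r)$ the region \eqref{qrregion} is exactly the convex hull of the four points corresponding to $(q,r)\in\{(2,2),(\infty,2),(2,p_c),(p_c,p_c)\}$ (a direct check: the line $\frac2q+\frac dr=\frac d2$ passes through $(p_c,p_c)$ and $(\infty,2)$, and meets $r=p_c$ at $(p_c,p_c)$). Since $\ell^2$ decoupling constants interpolate --- after the standard reduction to smooth Fourier cutoffs, $D_{q,r}(\delta)$ is comparable to the norm of $(g_\om)_\om\mapsto\sum_\om g_\om$ from $\ell^2_\om(L^q_tL^r_x)$ to $L^q_tL^r_x$, and mixed-norm and $\ell^2$-valued Lebesgue spaces complex-interpolate with the expected exponents --- it suffices to establish $D_{q,r}(\delta)\lsm_\vep\delta^{-\vep}$ at these four corners. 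Two are immediate: $D_{2,2}(\delta)\lsm1$ by Plancherel, and $D_{p_c,p_c}(\delta)\lsm_\vep\delta^{-\vep}$ by Bourgain--Demeter \cite{BD14}. For $(q,r)=(\infty,2)$, fix $t$: the $P_{\ta_\om}f(\cdot,t)$ have $x$-Fourier supports in the cubes $\om+[-\delta,\delta]^d$, which have bounded overlap since $\Xi$ is $\delta$-separated, so Plancherel in $x$ and Cauchy--Schwarz give $\nms{\sum_\om P_{\ta_\om}f(\cdot,t)}_{L^2_x}^2\lsm_d\sum_\om\nms{P_{\ta_\om}f(\cdot,t)}_{L^2_x}^2\le\sum_\om\nms{P_{\ta_\om}f}_{L^\infty_tL^2_x}^2$, and taking the supremum over $t$ yields $D_{\infty,2}(\delta)\lsm_d1$.

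The crux --- and the step I expect to be the main obstacle --- is the corner $(q,r)=(2,p_c)$, where one must use the $L^2$ in $t$ together with curvature in $x$. Write $F=\sum_\om P_{\ta_\om}f$. Because of the shear in the definition of $\ta_\om$, the $\eta$-projection of $\ta_\om$ is an interval of length $O(\delta)$ about $|\om|^2$. Group the caps by which interval $J_k$ of a fixed partition of $[0,d]$ into length-$\delta$ intervals contains $|\om|^2$, and set $F_k:=\sum_{\om\in\mc O_k}P_{\ta_\om}f$, $\mc O_k:=\{\om\in\Xi:|\om|^2\in J_k\}$; then $F_k$ has $t$-Fourier support in an $O(\delta)$-neighborhood of $J_k$, and these have bounded overlap. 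Plancherel in $t$, Cauchy--Schwarz, and Minkowski's inequality (valid since $p_c\ge2$) give $\nms{F}_{L^2_tL^{p_c}_x}\lsm(\sum_k\nms{F_k}_{L^2_tL^{p_c}_x}^2)^{1/2}$, so it remains to decouple each $F_k$ into its caps with an $O_\vep(\delta^{-\vep})$ constant. Since $F_k$ and each $P_{\ta_\om}f$ ($\om\in\mc O_k$) have $t$-Fourier support in an interval of length $O(\delta)$, they are locally constant at scale $\delta^{-1}$ in $t$. Partition $\R_t$ into length-$\delta^{-1}$ intervals $I$: on each $I$, $\nms{F_k}_{L^2_t(I)L^{p_c}_x}^2\lsm|I|\,\nms{F_k(\cdot,t_I)}_{L^{p_c}_x}^2$ for a suitable $t_I\in I$; and at the fixed time $t_I$ the function $F_k(\cdot,t_I)=\sum_{\om\in\mc O_k}P_{\ta_\om}f(\cdot,t_I)$ has $x$-Fourier support in the $O(\delta)$-neighborhood of the sphere $\{\xi\in\R^d:|\xi|^2=a_k\}$ ($a_k\in J_k$), decomposed into $\delta$-cubes, so the sharp $\ell^2L^{p_c}$ decoupling for the $(d-1)$-sphere in $\R^d$ --- available because $p_c\le\frac{2(d+1)}{d-1}$ --- gives $\nms{F_k(\cdot,t_I)}_{L^{p_c}_x}\lsm_\vep\delta^{-\vep}(\sum_{\om\in\mc O_k}\nms{P_{\ta_\om}f(\cdot,t_I)}_{L^{p_c}_x}^2)^{1/2}$. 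Replacing $\nms{P_{\ta_\om}f(\cdot,t_I)}_{L^{p_c}_x}^2$ by $\tfrac1{|I|}\int_{2I}\nms{P_{\ta_\om}f(\cdot,t)}_{L^{p_c}_x}^2\,dt$ via local constancy in $t$, then summing over $I$ and over $k$, yields $D_{2,p_c}(\delta)\lsm_\vep\delta^{-\vep}$. (For $d=1$ each $\mc O_k$ has $O(1)$ elements, so this is trivial there.) The delicate point inside this argument is that the sphere $\{|\xi|^2=a_k\}$ degenerates when $a_k=O(\delta)$: the corresponding caps lie in an $O(\delta^{1/2})$-ball about the origin, so after parabolic rescaling they constitute a decoupling problem at scale $\delta^{1/2}$; this is absorbed by inducting on the scale and summing the (geometrically convergent) series over dyadic values of $|\om|$, losing only a further $\delta^{-\vep}$. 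Interpolating the four corner estimates then completes the sufficiency.

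For necessity, the inequality $\frac2q+\frac dr\ge\frac d2$ is forced by the focusing example $\wh f=1_{\mc N}$ with $\mc N$ the $\delta^2$-neighborhood of the paraboloid over $[-1,1]^d$: each $P_{\ta_\om}f$ is then a single wave packet with $\nms{P_{\ta_\om}f}_{L^q_tL^r_x}\approx\delta^{d+2-d/r-2/q}$, while $\nms{F}_{L^q_tL^r_x}\gtrsim\delta^2$ already on the unit ball, so $D_{q,r}(\delta)\gtrsim\delta^{d/r+2/q-d/2}$. The conditions $r\ge2$ and $q\ge2$ come from ``spread-out'' examples: take a line of $\sim\delta^{-1}$ caps and translate their wave packets so as to be essentially disjoint in the $x$-variable (respectively the $t$-variable), which forces $D_{q,r}(\delta)\gtrsim\delta^{1/2-1/r}$ (respectively $\delta^{1/2-1/q}$). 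Finally, $r\le p_c$ is the mixed-norm counterpart of the sharpness of Bourgain--Demeter $\ell^2L^p$ decoupling and is obtained from an extension-operator/Knapp-type example; pinning down the extremal configuration in the mixed-norm setting appears to be the subtlest point of this direction and is where I would need to think hardest.
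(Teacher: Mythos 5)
Your overall scheme (reduce to four corner estimates in the $(1/r,1/q)$ plane and interpolate; four explicit examples for necessity) parallels the paper, and three of your corners --- $(2,2)$, $(\infty,2)$, $(p_c,p_c)$ --- are handled as in the paper. But your treatment of the decisive corner $(q,r)=(2,p_c)$ has a genuine gap at its very first step. The claimed almost-orthogonality $\nms{\sum_k F_k}_{L^2_tL^{p_c}_x}\lsm(\sum_k\nms{F_k}_{L^2_tL^{p_c}_x}^2)^{1/2}$ for pieces with boundedly overlapping $t$-frequency supports is not a consequence of ``Plancherel in $t$, Cauchy--Schwarz, and Minkowski'': those tools give $\nms{\sum_kF_k}_{L^2_tL^{p_c}_x}\le\nms{\sum_kF_k}_{L^{p_c}_xL^2_t}\lsm(\sum_k\nms{F_k}_{L^{p_c}_xL^2_t}^2)^{1/2}$, and the inner norms come out in the order $L^{p_c}_xL^2_t$, which dominates $L^2_tL^{p_c}_x$ --- the wrong direction, and not reversible without loss. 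Indeed the general principle is false for $r>2$: taking $F_k(x,t)=e(k\delta t)c_k(x)\phi(t)$ with $c_k(x)=e(kx)$ produces a Dirichlet kernel $D_N(x+t)$, $N\sim\delta^{-1}$, for which the left side is $\sim N^{1-1/p_c}$ while the right side is $\sim N^{1/2}$; the loss $N^{1/2-1/p_c}$ is a power of $\delta$. This is not a technicality here: the extremizer for the necessity of $r\le p_c$ (temporally separated points of constructive interference) is exactly of this concentrating type, so the $(2,p_c)$ bound is borderline and cannot follow from $t$-frequency separation alone. A second gap sits in the fixed-time step: sharp $\ell^2L^{p_c}$ decoupling for the $\delta$-neighborhood of the sphere $\{|\xi|^2=a_k\}$ decouples into caps of diameter $\delta^{1/2}$, not into the $\delta$-cubes $\ta_\om$ you need; bridging $\delta^{1/2}$-caps to $\delta$-cubes is flat decoupling and costs another power of $\delta$. (The degenerate small-$a_k$ issue you flag is real too, but secondary.) For comparison, the paper reaches $(2,p_c)$ by an elementary ``reverse H\"older in time'': after localizing to time intervals of length $\delta^{-2}$, each single piece $\wt P_{\ta_\om}f$ has $t$-frequency support of width $O(\delta^2)$ and is therefore locally constant in $t$ on such intervals, so its normalized $L^{q}_{\#,t}L^r_x$ norms are comparable for all $q$; this makes lowering the time exponent on \emph{both} sides of the decoupling inequality free and yields $\wt D_{2,p_c}(\delta)\lsm\wt D_{p_c,p_c}(2\delta)$ directly, with no sphere decoupling and no orthogonality in $L^2_t(L^{p_c}_x)$.

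On the necessity side, your first three examples match the paper's (bush for the Strichartz line; spatially, resp.\ temporally, translated wave packets for $r\ge2$, resp.\ $q\ge2$), though you should use all $\sim\delta^{-d}$ caps rather than a line of $\delta^{-1}$ of them to get the stated exponents. The condition $r\le p_c$, however, is left unproven: a Knapp example does not produce it, and you acknowledge not knowing the extremal configuration. The paper's construction superposes $\sim\delta^{-(d+1)}$ recentered bushes, tuned so that all caps interfere constructively on a family of unit balls centered at $(\vec k\,\delta^{-1-\vep_0},k_1)$ with $0\le k_1<\delta^{-2}$; the gain over a single bush comes from the $\delta^{-2}$ temporally separated interference sites, and the computation gives exactly $\mb{D}_{q,r}(\delta)\gtrsim\delta^{-(\frac d2-\frac{d+2}{r})}$. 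As it stands, both the sufficiency at the corner $(2,p_c)$ and the necessity of $r\le p_c$ --- the two halves of the theorem that go beyond Bourgain--Demeter plus soft arguments --- are missing from the proposal.
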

The region given in \eqref{qrregion} is illustrated in Figure \ref{mixedfig} below. 
\begin{figure}
\begin{tikzpicture}[scale=5]
\draw[->][thick] (0,0)-- (0.6,0);
\draw[->][thick] (0,0) -- (0,0.6);
\node[below] at (0.6,0) {$\frac{1}{r}$};
\node[left] at  (0,0.65) {$\frac{1}{q}$};
\draw[thick] (1/2,-0.02) node[below] {$\frac{1}{2}$} -- (1/2,0.02);
\draw[thick] (-0.02, 1/4) node[left] {$\frac{1}{4}$} -- (0.02, 1/4);
\draw[thick] (-0.02, 1/2) node[left] {$\frac{1}{2}$} -- (0.02, 1/2);
\draw[very thick] (1/2, 1/2) circle[radius=0.05ex];
\draw[very thick] (1/2, 0) circle[radius=0.05ex];
\draw[very thick] (1/6, 1/2) circle[radius=0.05ex];
\node[above] at (1/6, 1/2) {\small $(\frac{1}{6}, \frac{1}{2})$};
\node[above left] at (1/2 + 0.1,1/2) {\small $( \frac{1}{2}, \frac{1}{2}) $ };
\draw[very thick] (1/6, 1/6) circle[radius=0.05ex];
\node[below left] at (1/6 + 0.1,1/6) {\small $( \frac{1}{6}, \frac{1}{6}) $ };
\draw[thick] (1/2, 0) -- (1/6, 1/6);
\draw[dashed] (1/6, 1/6) -- (0, 1/4);
\draw[thick] (1/6, 1/6) -- (1/6, 1/2);
\draw[fill=gray, opacity = 0.3] (1/6, 1/6) -- (1/6, 1/2) -- (1/2, 1/2) -- (1/2, 0);
\draw[thick] (1/6, 1/2) -- (1/2, 1/2);
\draw[dashed] (0, 1/2) -- (1/6, 1/2);
\draw[thick] (1/2, 1/2) -- (1/2, 0);
\end{tikzpicture}
\quad\quad\quad
\begin{tikzpicture}[scale=5]
\draw[->][thick] (0,0)-- (0.6,0);
\draw[->][thick] (0,0) -- (0,0.6);
\node[below] at (0.6,0) {$\frac{1}{r}$};
\node[left] at  (0,0.65) {$\frac{1}{q}$};
\draw[thick] (1/2,-0.02) node[below] {$\frac{1}{2}$} -- (1/2,0.02);
\draw[thick] (-0.02, 1/2) node[left] {$\frac{1}{2}$} -- (0.02, 1/2);
\draw[very thick] (1/2, 1/2) circle[radius=0.05ex];
\draw[very thick] (1/2, 0) circle[radius=0.05ex];
\draw[very thick] (1/4, 1/2) circle[radius=0.05ex];
\node[above] at (1/4, 1/2) {\small $(\frac{1}{4}, \frac{1}{2})$};
\node[above left] at (1/2 + 0.1,1/2) {\small $( \frac{1}{2}, \frac{1}{2}) $ };
\draw[very thick] (1/4, 1/4) circle[radius=0.05ex];
\node[below left] at (1/4,1/4) {\small $( \frac{1}{4}, \frac{1}{4}) $ };
\draw[thick] (1/2, 0) -- (1/4, 1/4);
\draw[dashed] (1/4, 1/4) -- (0, 1/2);
\draw[thick] (1/4, 1/4) -- (1/4, 1/2);
\draw[fill=gray, opacity = 0.3] (1/4, 1/4) -- (1/4, 1/2) -- (1/2, 1/2) -- (1/2, 0);
\draw[dashed] (0, 1/2) -- (1/4, 1/2);
\draw[thick] (1/4, 1/2) -- (1/2, 1/2);
\draw[thick] (1/2, 1/2) -- (1/2, 0);
\end{tikzpicture}
\quad\quad\quad
\begin{tikzpicture}[scale=5]
\draw[->][thick] (0,0)-- (0.6,0);
\draw[->][thick] (0,0) -- (0,0.6);
\node[below] at (0.6,0) {$\frac{1}{r}$};
\node[left] at  (0,0.65) {$\frac{1}{q}$};
\draw[thick] (1/2,-0.02) node[below] {$\frac{1}{2}$} -- (1/2,0.02);
\draw[thick] (-0.02, 1/2) node[left] {$\frac{1}{2}$} -- (0.02, 1/2);
\draw[very thick] (1/2, 1/2) circle[radius=0.05ex];
\draw[very thick] (1/2, 0) circle[radius=0.05ex];
\draw[very thick] (3/10, 1/2) circle[radius=0.05ex];
\node[above left] at (3/10 + 0.1, 1/2) {\small $(\frac{d}{2(d + 2)}, \frac{1}{2})$};
\node[above left] at (1/2 + 0.1,1/2) {\small $( \frac{1}{2}, \frac{1}{2}) $ };
\draw[very thick] (3/10, 3/10) circle[radius=0.05ex];
\node[below left] at (3/10 + 0.15,3/10) {\small $( \frac{d}{2(d + 2)}, \frac{d}{2(d + 2)}) $ };
\draw[thick] (1/2, 0) -- (3/10, 3/10);
\draw[dashed] (3/10, 3/10) -- (1/6, 1/2);
\draw[thick] (3/10, 3/10) -- (3/10, 1/2);
\draw[fill=gray, opacity = 0.3] (3/10, 3/10) -- (3/10, 1/2) -- (1/2, 1/2) -- (1/2, 0);
\draw[dashed] (1/6, 1/2) -- (3/10, 1/2);
\draw[thick] (3/10, 1/2) -- (1/2, 1/2);
\draw[thick] (1/2, 1/2) -- (1/2, 0);
\end{tikzpicture}
\caption{$d = 1$ (left), $d = 2$ (center), and $d \geq 3$ (right)}\label{mixedfig}
\end{figure}
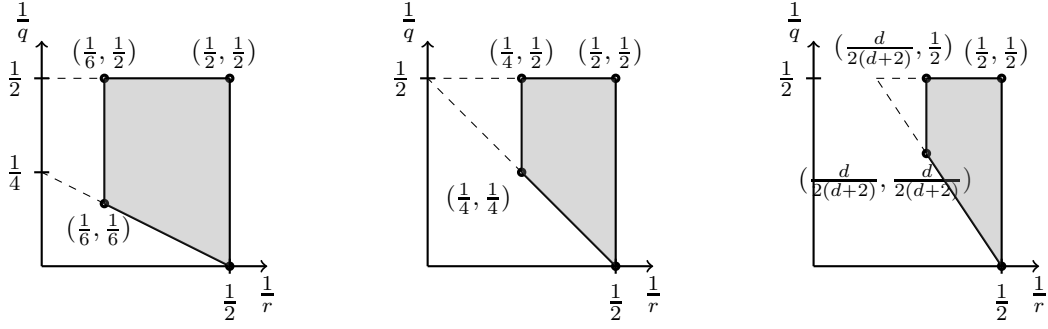
Outside the shaded region in Figure \ref{mixedfig},
interpolation with trivial bounds and supercritical $(l^{2}, L^{p}_{x, t})$ paraboloid decoupling gives the sharp estimate.
More precisely, we have:
\begin{prop}\label{full}
For any $q, r \geq 2$,
\begin{align}\label{fulleq}
D_{q, r}(\delta) \lsm_{q, r, \vep}
\begin{cases}
\delta^{-(\frac{d}{2} - \frac{d}{r} - \frac{2}{r}) - \vep} & \text{ if }\quad  q \leq r \text{ and }\, r \geq \frac{2(d + 2)}{d}\\
\delta^{-(\frac{d}{2} - \frac{d}{r} - \frac{2}{q}) - \vep} & \text{ if }\quad  q \geq r \text{ and }\, \frac{2}{q} + \frac{d}{r} \leq \frac{d}{2}
\end{cases}
\end{align}
and this estimate is sharp up to a $\delta^{-\vep}$ loss.
\end{prop}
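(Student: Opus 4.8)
The plan is to obtain the upper bound \eqref{fulleq} by interpolating known decoupling estimates placed at the corners of the two regions appearing there, and to obtain the matching lower bound from explicit examples. Write $p_c=\tfrac{2(d+2)}{d}$. In the coordinates $(1/q,1/r)$ the region $\{q\le r,\ r\ge p_c\}$ is a quadrilateral whose vertices correspond to $(q,r)\in\{(\infty,\infty),(2,\infty),(2,p_c),(p_c,p_c)\}$, while the region $\{q\ge r,\ \tfrac2q+\tfrac dr\le\tfrac d2\}$ is a triangle with vertices $(q,r)\in\{(\infty,\infty),(\infty,2),(p_c,p_c)\}$. The elementary but crucial observation is that in each case the claimed exponent, $\tfrac d2-\tfrac dr-\tfrac2r$ resp.\ $\tfrac d2-\tfrac dr-\tfrac2q$, is an \emph{affine} function of $(1/q,1/r)$, so it suffices to verify the desired bound at each vertex.

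For the upper bound I would first collect the estimate at each vertex. Since $\Xi$ is $\delta$-separated in $[-1,1]^d$ we have $\#\Xi\lesssim\delta^{-d}$, so by the triangle inequality and Cauchy--Schwarz $D_{\infty,\infty}(\delta),\,D_{2,\infty}(\delta)\lesssim\delta^{-d/2}$; and since the $\xi$-projections of the $\ta_\om$ have bounded overlap, Plancherel in $x$ for each fixed $t$ followed by exchanging $\sup_t$ with $\sum_\om$ gives $D_{\infty,2}(\delta)\lesssim 1$. At $(p_c,p_c)$ Bourgain--Demeter \cite{BD14} gives $D_{p_c,p_c}(\delta)\lesssim_\vep\delta^{-\vep}$, and at $(2,p_c)$ — which satisfies \eqref{qrregion} — Theorem \ref{main} gives $D_{2,p_c}(\delta)\lesssim_\vep\delta^{-\vep}$. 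In every case the exponent on the right equals the value of the relevant affine function at that vertex. These inequalities then propagate to the interiors by interpolation: working with smooth Fourier cutoffs $\wt{P}_{\ta_\om}$ (which are bounded on every mixed Lebesgue space $L^q_tL^r_x$), the linear map $(g_\om)_\om\mapsto\sum_\om\wt{P}_{\ta_\om}g_\om$ has $\ell^2_\om(L^q_tL^r_x)\to L^q_tL^r_x$ norm $\lesssim D_{q,r}(\delta,\Xi)$, and iterated complex interpolation of mixed Lebesgue spaces ($[L^{q_0}_tL^{r_0}_x,L^{q_1}_tL^{r_1}_x]_\theta=L^{q_\theta}_tL^{r_\theta}_x$) yields $D_{q,r}(\delta,\Xi)\le\prod_i D_{q_i,r_i}(\delta,\Xi)^{\lambda_i}$ whenever $(1/q,1/r)=\sum_i\lambda_i(1/q_i,1/r_i)$ is a convex combination of the vertices. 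Taking the supremum over $\Xi$ and using affineness of the exponent gives exactly \eqref{fulleq}, the $\delta^{-\vep}$ losses entering only through the vertices $(p_c,p_c)$ and $(2,p_c)$.

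For sharpness, the second case of \eqref{fulleq} comes from the focusing (Knapp-type) example: take $\Xi$ a maximal $\delta$-net, so $\#\Xi\approx\delta^{-d}$, and let $\widehat{P_{\ta_\om}f}$ be a smooth bump adapted to $\ta_\om$. Then $|P_{\ta_\om}f|\approx1$ on the dual slab $R_\om$ (comparable to a $\delta^{-1}\times\cdots\times\delta^{-1}\times\delta^{-2}$ box through the origin), so $\|P_{\ta_\om}f\|_{L^q_tL^r_x}\approx\delta^{-2/q-d/r}$, while near the origin all the wave packets are in phase, so $|\sum_\om P_{\ta_\om}f|\approx\#\Xi\approx\delta^{-d}$ on a ball of radius $O(1)$; hence $D_{q,r}(\delta)\gtrsim\delta^{-d}/(\delta^{-d/2}\delta^{-2/q-d/r})=\delta^{-(\frac d2-\frac dr-\frac2q)}$, matching the second case of \eqref{fulleq} (and matching the first case on the diagonal $q=r$). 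For the remaining part of the first case, $q<r$, this example is \emph{not} sharp: it only yields $\delta^{-(\frac d2-\frac dr-\frac2q)}$, which is strictly weaker than the target $\delta^{-(\frac d2-\frac dr-\frac2r)}$, so a different construction is required there.

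I expect that construction to be the main obstacle. The difficulty is structural: the "large" set of the focusing example, where $|\sum_\om P_{\ta_\om}f|\approx\delta^{-d}$, is essentially a unit ball in spacetime, so it is not spread out in the time variable and one gains nothing from passing to the smaller norm $L^q_t$; moreover the curvature in the $\eta$-direction (the $|\om|^2$ term in the definition of $\ta_\om$) prevents enlarging the coherent region in $t$ while keeping positive measure in $x$. One must therefore exploit the $\ell^2$-structure in a genuinely different way — e.g.\ via a degenerate or lower-dimensional family of caps, or a superposition tuned to the anisotropy of $L^q_tL^r_x$ — and verifying that such a configuration attains the supercritical exponent $\tfrac d2-\tfrac{d+2}r$ uniformly for all $q\le r$ is the heart of the matter. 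Everything else (the vertex estimates, the interpolation, and the $q=r$ sharpness) is routine.
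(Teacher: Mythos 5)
Your upper bound argument is correct and is in substance the paper's own: the paper likewise passes to smooth projections $\wt{P}_{\ta_\om}$, establishes the corner estimates ($\wt{D}_{q,\infty}(\delta)\lsm\delta^{-d/2}$ by the triangle inequality and Cauchy--Schwarz, $\wt{D}_{\infty,2}(\delta)\leq 1$ by Plancherel, Bourgain--Demeter at $(p_c,p_c)$, and Theorem \ref{main} at $(2,p_c)$), and then applies a mixed-norm Riesz--Thorin theorem, the point being exactly the one you make: both claimed exponents are affine in $(1/q,1/r)$, so it suffices to check the extreme points. Your sharpness argument for the second case and for the diagonal $q=r$ (the bush example of Section \ref{sec:line}) also matches the paper.

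The sharpness of the first case for $q<r$ --- which you correctly flag as unresolved --- is, however, a genuine gap, and not a peripheral one: without it the word ``sharp'' is unproved on the entire open region $q<r$, $r>\frac{2(d+2)}{d}$. The paper closes it with the construction of Section \ref{sec:r2d2d}, which is precisely the ``time-spread superposition'' you speculate about. One superimposes $\sim\delta^{-(d+1)}$ translated copies of the bush, centered at the points $c_{\vec k}=(\vec k\,\delta^{-1-\vep_0},k_1)$ with $\delta^{-2}$ integer time coordinates $k_1$ and $\delta^{-(d-1)}$ spatial translates; each copy is phase-tuned on the Fourier side by the modulation $e(-c_{\vec k}\cdot(\xi,\eta))$ so that all $\sim\delta^{-d}$ wave packets are simultaneously in phase on every unit ball $C_{\vec k}=B(c_{\vec k},10^{-10})$. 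Because the coherent set is now spread over $\sim\delta^{-2}$ unit-separated times filling the whole dual time scale, both $\nms{\sum_{\om}f_{\ta_\om}}_{L^q_tL^r_x}$ and $\nms{f_{\ta_\om}}_{L^q_tL^r_x}$ carry the same factor $\delta^{-2/q}$, so the $q$-dependence cancels and one gets the $q$-independent bound $\mb{D}_{q,r}(\delta)\gtrsim\delta^{-(\frac d2-\frac{d+2}{r})}$. The nontrivial geometric input is Lemma \ref{2ndilatehd}: the moderately dilated dual tubes $\delta^{-\vep_0^2}T^0_\om+c_{\vec k}$ are pairwise disjoint in $\vec k$, which simultaneously isolates each ball $C_{\vec k}$ from the other bushes (controlling the cross terms) and makes the $L^q_tL^r_x$ norm of each $f_{\ta_\om}$ computable on the right-hand side. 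Until such a construction is carried out, your proposal establishes the upper bound in \eqref{fulleq} and its sharpness only for $q\geq r$; the first-case sharpness remains open.
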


One corollary of Theorem \ref{main} is that if
$(q, r)$ lie in \eqref{qrregion}, then
\begin{align}\label{discres}
\nms{\sum_{n_1, \ldots, n_d = 1}^{N}a_{n}e^{2\pi i (n \cdot x + |n|^2 t)}}_{L^{q}_{t}L^{r}_{x}(\T^{d} \times [0, 1])} \lsm_{q, r, \vep} N^{\vep}(\sum_{n_1, \ldots, n_d = 1}^{N}|a_n|^{2})^{1/2}.
\end{align}
The case of $(q, r) = (2, \frac{2d}{d - 2})$ in \eqref{discres} corresponds to where the Strichartz line intersects the line $\frac{1}{q} = \frac{1}{2}$ is of interest in PDE,
see for example \cite[Section 6.2]{KNS}, though our result is only able to give:
\begin{align}\label{torusimprove}
\nms{\sum_{n_1, \ldots, n_d = 1}^{N}a_{n}e^{2\pi i (n \cdot x + |n|^2 t)}}_{L^{2}_{t}L^{\frac{2d}{d - 2}}_{x}(\T^{d} \times [0, 1])} \lsm_{\vep} N^{2/d+\vep}(\sum_{n_1, \ldots, n_d = 1}^{N}|a_n|^{2})^{1/2}.
\end{align}
An application of \cite[Theorem 1]{BGT} in the case of the torus gives \eqref{torusimprove}
with a loss of $N^{1/2}$ instead of $N^{2/d}$ and thus \eqref{torusimprove} represents an improvement in this case for $d \geq 5$.
Note however, Theorem \ref{main} does not discount the possibility that the weaker discrete restriction estimate \eqref{discres}
is valid on the larger region bounded by the Strichartz line and the lines $\frac{1}{q} = \frac{1}{2}$ and $\frac{1}{r} = \frac{1}{2}$.
The example in Section \ref{sec:r2d2d} shows the necessity of $r \leq \frac{2(d + 2)}{d}$ and is built out of wavepackets that have constructive
interference on temporally disjoint intervals.
This was inspired from a ``wavepacket tuning" idea mentioned by Larry Guth in a talk\footnote{\url{https://www.youtube.com/watch?v=h_psKQV-pU4}} given at the HIM Dual Trimester
Program on Harmonic Analysis and Analytic Number Theory in 2021.

The proof that \eqref{qrregion} is necessary for \eqref{mainbd}
follows from four different lower bounds (see Sections \ref{sec:line}-\ref{sec:r2d2d}). 
For the sufficiency of \eqref{qrregion} for \eqref{mainbd}, 
we first reduce to a more smoothed version of decoupling.
We then use an interpolation estimate for decoupling
combined with a reverse H\"{o}lder in time (see Proposition \ref{decrease})
and then apply both results to the known sharp $(l^{2}, L_{x, t}^{2(d + 2)/d})$ paraboloid
decoupling estimate due to Bourgain and Demeter \cite{BD14}. 

\subsection*{Notation}
We will let $e(t) := e^{2\pi i t}$.
All our implied constants are allowed to depend on the spatial dimension $d$.
Given two nonnegative expressions $X$ and $Y$, we write
$X \lsm Y$ if $X \leq CY$ for some constant $C$ that is allowed to depend on $d$. If $C$ depends on some additional parameter $A$,
then we write $X \lsm_{A} Y$. We write $X \sim Y$ if $X \lsm Y$ and $Y \lsm X$.
Let $c \theta$ be defined to be a region with the same center
as $\theta$ but dilated by $c$.
We will let $\phi$ be an even Schwartz function such that
\begin{align}\label{schwartzphi}
1_{[-1, 1]}(t) \leq \phi(t) \lsm_{E} (1 + |t|)^{-E}
\end{align}
and $\supp(\wh{\phi}) \subset [-1, 1]$
where we are free to choose $E$.
An explicit example of such a $\phi$ can be constructed from a modification
of the function constructed  in \cite[Lemma 2.2.8]{thesis} which
gave an even function $\geq 1_{[-1/2, 1/2]}$ and whose Fourier support
is contained in $[-1/2, 1/2]$.
We define the mixed norms:
\begin{align*}
\nms{f}_{L^{q}_{t}L^{r}_{x}(\R^d \times \R)} &:= (\int_{\R}(\int_{\R^d}|f(x, t)|^{r}\, dx)^{q/r}\, dt)^{1/q}\\
\nms{f}_{L^{q}_{t}L^{r}_{x}(\R^d \times I)} &:= \nms{f1_{I}}_{L^{q}_{t}L^{r}_{x}(\R^d \times \R)} = (\int_{I}(\int_{\R^d}|f(x, t)|^{r}\, dx)^{q/r}\, dt)^{1/q}\\
\nms{f}_{L^{q}_{t}L^{r}_{x}(\R^d \times \phi)} &:= \nms{f\phi}_{L^{q}_{t}L^{r}_{x}(\R^d \times \R)}= (\int_{\R}(\int_{\R^d}|f(x, t)|^{r}\, dx)^{q/r}\phi(t)^{q}\, dt)^{1/q}
\end{align*}
where $\phi(t) : \R \rightarrow \R_{\geq 0}$ and $I$ is a time interval.
For an interval $I$ centered at $c$ of length $R$, we let $\phi_{I}(t) := \phi(\frac{t - c}{R})$.
Finally, we define
\begin{align*}
\nms{f}_{L^{q}_{\#, t}L^{r}_{x}(\R^d \times I)} &:= |I|^{-1/q}\nms{f1_I}_{L^{q}_{t}L^{r}_{x}(\R^d \times \R)} = (\frac{1}{|I|}\int_{I}(\int_{\R^d}|f(x, t)|^{r}\, dx)^{q/r}\, dt)^{1/q}\\
\nms{f}_{L^{q}_{\#, t}L^{r}_{x}(\R^d \times \phi_{I})} &:= |I|^{-1/q}\nms{f\phi_{I}}_{L^{q}_{t}L^{r}_{x}(\R^d \times \R)} = (\frac{1}{|I|}\int_{\R}(\int_{\R^d}|f(x, t)|^{r}\, dx)^{q/r}\phi_{I}(t)^{q}\, dt)^{1/q}.
\end{align*}

\subsection*{Acknowledgements}
ZL is supported by NSF grants DMS-2037851 and DMS-2311174.
The authors
would like to thank Ciprian Demeter, Larry Guth, and Terence Tao
for numerous helpful discussions and encouragement.
The authors would also like to thank Nikolay Tzvetkov for letting us know about the comparison of our result \eqref{torusimprove} to \cite[Theorem 1]{BGT} in the case of the torus.
\section{Proof of necessity of \eqref{qrregion} for Theorem \ref{main}}
To prove the necessity of \eqref{qrregion}, we will give four different
examples. 
In this section, for $\delta \in (0, 1)$, we choose $\Xi$ to be a maximal $\delta$-net of $[-1, 1]^d$,
that is, $\Xi$ is a collection of points in $[-1, 1]^d$ that are separated
from each other by a distance of at least $\delta$ and which is maximal
with respect to set inclusion.
A volume counting argument implies that $\#\Xi \sim \delta^{-d}$.
In our examples below, for brevity, we will abbreviate $P_{\ta_{\om}}f$ with $f_{\ta_{\om}}$.
 
We will let $\mb{D}_{q, r}(\delta) := D_{q, r}(\delta, \Xi)$. 
We will show that for all $1 \leq q, r \leq \infty$ we have
\begin{align}\label{lowerboundsum}
\mb{D}_{q, r}(\delta) \gtrsim_{q, r} \max(\delta^{-(\frac{d}{2} - \frac{d}{r} - \frac{2}{q})}, \delta^{-d(\frac{1}{r} - \frac{1}{2})}, \delta^{-d(\frac{1}{q} - \frac{1}{2})}, \delta^{-(\frac{d}{2} - \frac{d + 2}{r})})
\end{align}
for all $\delta$ sufficiently small depending on $d$.
The expressions on the right hand side will be shown in Sections \ref{sec:line}, \ref{sec:r2}, \ref{sec:q2} and \ref{sec:r2d2d}, respectively.
Once we have \eqref{lowerboundsum}, since $D_{q, r}(\delta) \geq \mb{D}_{q, r}(\delta)$, it follows that to have \eqref{mainbd}, we must
have the conditions in \eqref{qrregion} thereby proving
necessity of these conditions.

\subsection{Dual tubes and Schwartz functions}
For notational convenience, let
\begin{align*}
\ta_{\delta} := \ta_{0, \delta} = \{(\xi, \eta) \in \R^{d + 1}: |\xi_i| \leq \delta \text{ for } i = 1, 2, \ldots, d,\, |\eta| \leq 2d\delta^2\}
\end{align*}
and define its dual parallelpiped
\begin{align*}
T_{\delta} := \{(x, t) \in \R^{d + 1}: |x_i| \leq \delta^{-1} \text{ for } i = 1, 2, \ldots, d,\, |t| \leq (2d)^{-1}\delta^{-2}\}.
\end{align*}
Define the $(d + 1) \times (d + 1)$ matrix
\begin{align*}
M_{\om} := \left[\begin{array}{c|c}
I_{d} & \mathbf{0}\\
\hline
2\om & 1
\end{array}\right]
\end{align*}
where here $I_{d}$ is the $d \times d$ identity matrix, $\mb{0}$ is a $d\times 1$ column
vector of $0$'s, and $2\om$ is written as a $1 \times d$ row vector. 
Note that $M_{\om}^{-1} = (\begin{smallmatrix} I_d &\mathbf{0}\\-2\om & 1\end{smallmatrix})$.
Observe that
\begin{align}\label{taqtad}
\ta_{\om} = (\om, |\om|^{2}) + M_{\om}\ta_{\delta}.
\end{align}
With $\phi$ as in \eqref{schwartzphi}, define
\begin{align*}
\psi_{\ta_{\delta}}(\xi, \eta) := \frac{1}{2d}\wh{\phi}(\frac{\eta}{2d\delta^2})\prod_{i = 1}^{d}\wh{\phi}(\frac{\xi_i}{\delta}).
\end{align*}
and for each $\ta_{\om}$, let
\begin{align*}
\psi_{\ta_\om}(\xi, \eta) := \psi_{\ta_{\delta}}(M_{\om}^{-1}((\xi, \eta) - (\om, |\om|^2)))
\end{align*}
where $\phi$ is as defined as in \eqref{schwartzphi}.
Since $\wh{\phi}$ is supported in $[-1, 1]$
and hence $\psi_{\ta_{\delta}}$ is supported in $\ta_{\delta}$.
By \eqref{taqtad}, this then implies that $\psi_{\ta_\om}$ 
is supported in $\ta_{\om}$. Note that $\psi_{\ta_{\om}} = \psi_{\ta_{\delta}}$
if $\om = 0$.

The Fourier inverse of $\psi_{\ta_{\delta}}$ is essentially $|\ta_{\delta}|1_{T_{\delta}}$
and decays rapidly on dilates of $T_{\delta}$. This can be formalized
by the following lemma.
\begin{lemma}\label{dualtad}
For any $E > 0$, we have
\begin{align*}
1_{T_{\delta}}(x, t) \leq \delta^{-(d + 2)}\wc{\psi}_{\ta_{\delta}}(x, t) \lsm_{E} 1_{2T_{\delta}}(x, t) + \sum_{n \geq 1}2^{-Edn}1_{2^{n + 1}T_{\delta} \bs 2^{n}T_{\delta}} (x, t).
\end{align*}
\end{lemma}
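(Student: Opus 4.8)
The plan is to compute $\wc{\psi}_{\ta_{\delta}}$ explicitly as a product of one-dimensional Fourier inverses of $\wh{\phi}$, so that everything reduces to a single elementary fact about $\phi$ itself. First I would note that by the definition
\begin{align*}
\psi_{\ta_{\delta}}(\xi, \eta) = \frac{1}{2d}\wh{\phi}\Big(\frac{\eta}{2d\delta^2}\Big)\prod_{i=1}^{d}\wh{\phi}\Big(\frac{\xi_i}{\delta}\Big),
\end{align*}
the inverse Fourier transform factors: using $(\wh{\phi}(\cdot/a))^{\vee}(s) = a\,\phi(as)$ (since $\phi$ is even, so Fourier inversion of $\wh\phi$ returns $\phi$), one gets
\begin{align*}
\wc{\psi}_{\ta_{\delta}}(x, t) = \delta^{d}\cdot \delta^{2}\cdot \phi(2d\delta^2 t)\prod_{i=1}^{d}\phi(\delta x_i),
\end{align*}
so that $\delta^{-(d+2)}\wc{\psi}_{\ta_{\delta}}(x,t) = \phi(2d\delta^2 t)\prod_{i=1}^{d}\phi(\delta x_i)$. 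Thus the claimed two-sided bound is a purely one-variable statement about $\phi$, applied to the $d+1$ coordinates with the appropriate rescalings.

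Next I would establish the lower bound $1_{T_{\delta}}(x,t) \leq \phi(2d\delta^2 t)\prod_{i}\phi(\delta x_i)$. On $T_{\delta}$ we have $|\delta x_i| \leq 1$ and $|2d\delta^2 t| \leq 1$, so each factor is $\geq 1_{[-1,1]}$ evaluated at a point of $[-1,1]$, hence $\geq 1$ by the left inequality in \eqref{schwartzphi}; off $T_\delta$ the product is nonnegative, which suffices. For the upper bound, I would use $\phi(s) \lsm_E (1+|s|)^{-E}$ from \eqref{schwartzphi} on each factor, so that
\begin{align*}
\phi(2d\delta^2 t)\prod_{i=1}^{d}\phi(\delta x_i) \lsm_{E} (1 + 2d\delta^2|t|)^{-E}\prod_{i=1}^{d}(1 + \delta|x_i|)^{-E}.
\end{align*}
Then I would do a standard dyadic decomposition of $\R^{d+1}$ according to the smallest $n\geq 0$ with $(x,t)\in 2^{n+1}T_\delta$: on $2^{n+1}T_\delta \setminus 2^n T_\delta$ at least one coordinate satisfies $|\delta x_i| \gtrsim 2^n$ or $|2d\delta^2 t| \gtrsim 2^n$, so the corresponding factor is $\lsm_E 2^{-En}$, and the remaining $d$ factors are $\leq 1$; choosing $E$ large (e.g. replace $E$ by $Ed$ in \eqref{schwartzphi}, which is allowed since $E$ is free) gives the weight $2^{-Edn}$, while the $n=0$ term gives the $1_{2T_\delta}$ contribution. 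Summing over $n$ yields exactly the right-hand side of the lemma.

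The main obstacle here is essentially bookkeeping rather than a genuine difficulty: one must be careful that the "off-$T_\delta$" region in the upper bound is tiled cleanly by the dyadic annuli $2^{n+1}T_\delta\setminus 2^n T_\delta$, and that on each annulus one correctly identifies which of the $d+1$ Schwartz factors produces the decay — it is whichever coordinate is responsible for $(x,t)$ lying outside $2^n T_\delta$, and since $T_\delta$ is the product set $\{|x_i|\le\delta^{-1}\}\times\{|t|\le(2d)^{-1}\delta^{-2}\}$, being outside $2^nT_\delta$ forces at least one of $|\delta x_i|$ or $|2d\delta^2 t|$ to exceed $2^n$. Everything else is the routine conversion of a Schwartz tail into a dyadic sum, and the constant $2^{-Edn}$ (rather than merely $2^{-En}$) is obtained for free by starting from \eqref{schwartzphi} with a sufficiently large exponent.
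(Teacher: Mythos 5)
Your proposal is correct and follows essentially the same route as the paper: compute $\delta^{-(d+2)}\wc{\psi}_{\ta_{\delta}}(x,t)=\phi(2d\delta^{2}t)\prod_{i}\phi(\delta x_i)$, get the lower bound from $\phi\geq 1_{[-1,1]}$, and get the upper bound from the Schwartz decay in \eqref{schwartzphi} together with the dyadic annuli $2^{n+1}T_{\delta}\setminus 2^{n}T_{\delta}$, using the freedom in $E$ to produce the factor $2^{-Edn}$.
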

\begin{proof}
We have that 
\begin{align}\label{dualtfinv}
\delta^{-(d + 2)}\wc{\psi}_{\ta_{\delta}}(\xi, \eta) = \phi(2d\delta^{2}t)\prod_{i = 1}^{d}\phi(\delta x_i).
\end{align}
By \eqref{schwartzphi}, this is
\begin{align*}
\geq 1_{[-1, 1]}(2d\delta^{2}t)\prod_{i = 1}^{d}1_{[-1, 1]}(\delta x_i) = 1_{T_{\delta}}(x, t).
\end{align*}
On the other hand, combining \eqref{dualtfinv} with the upper bound
in \eqref{schwartzphi}, gives an upper bound of
\begin{align*}
\lsm_{E} (1 + |2d\delta^{2}t|)^{-E}\prod_{i = 1}^{d}(1 + |\delta x_i|)^{-E}
\end{align*}
from which we see that it is $\lsm_{E} 1$ on $2T_{\delta}$
and $\lsm_{E} 2^{-Edn}$ on $2^{n + 1}T_{\delta}\bs 2^{n}T_{\delta}$.
\end{proof}

Let 
\begin{align*}
T_{\om}^{0} := M_{\om}^{-T}T_{\delta} = \{(x, t) \in \R^{d + 1}: |x_i + 2\om_i t| \leq \delta^{-1} \text{ for } i = 1, 2, \ldots, d, |t| \leq (2d)^{-1}\delta^{-2}\}
\end{align*}
be the tube ``dual" to $\ta_{\om}$
centered at the origin.
Heuristically, the Fourier inverse of $\psi_{\ta_{\om}}$ is
$|\ta_{\om}|e(\om\cdot x + |\om|^{2}t)$ multiplied by a function
that is essentially $1_{T_{\om}^{0}}$ and decays rapidly on dilates of $T_{\om}^{0}$.
We can formalize this using the following proposition.
\begin{prop}\label{dualtgen}
We have
$$\delta^{-(d + 2)}\wc{\psi}_{\ta_{\om}}(x, t) = e(\om \cdot x + |\om|^{2}t)\vp_{T_{\om}^{0}}(x, t)$$
where $\vp_{T_{\om}^{0}}$ satisfies
\begin{align*}
1_{T_{\om}^{0}}(x, t) \leq \vp_{T_{\om}^{0}}(x, t) \lsm_{E} 1_{2T_{\om}^{0}}(x, t) + \sum_{n \geq 1}2^{-Edn}1_{2^{n + 1}T_{\om}^{0} \bs 2^{n}T_{\om}^{0}}(x, t)
\end{align*}
for any $E > 0$.
\end{prop}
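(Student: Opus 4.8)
The plan is to derive the displayed identity from two elementary Fourier-inversion rules --- the translation/modulation rule and the linear-change-of-variables rule $\wc{(g\circ A)}(y) = |\det A|^{-1}\wc{g}(A^{-T}y)$ --- applied to the defining formula $\psi_{\ta_\om}(\xi,\eta) = \psi_{\ta_\delta}(M_\om^{-1}((\xi,\eta) - (\om,|\om|^2)))$, and then to transport the two-sided bound of Lemma \ref{dualtad} through the linear map $M_\om^T$. Concretely, write $\zeta = (\xi,\eta)$, $v = (\om,|\om|^2)$, and set $h(\zeta) := \psi_{\ta_\delta}(M_\om^{-1}\zeta)$, so that $\psi_{\ta_\om}(\zeta) = h(\zeta - v)$.

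First I would compute $\wc h$. Since $\det M_\om = 1$, substituting $u = M_\om^{-1}\zeta$ in $\wc h(y) = \int h(\zeta)e(y\cdot\zeta)\,d\zeta$ gives $\wc h(y) = \int \psi_{\ta_\delta}(u)e((M_\om^T y)\cdot u)\,du = \wc{\psi}_{\ta_\delta}(M_\om^T y)$, and the translation rule then yields $\wc{\psi}_{\ta_\om}(y) = e(v\cdot y)\,\wc{\psi}_{\ta_\delta}(M_\om^T y)$. Taking $y = (x,t)$, one has $v\cdot(x,t) = \om\cdot x + |\om|^2 t$ and $M_\om^T(x,t) = (x_1 + 2\om_1 t,\ldots,x_d + 2\om_d t,\, t)$, so multiplying through by $\delta^{-(d+2)}$ produces exactly the asserted factorization with
\[
\vp_{T_\om^0}(x,t) := \delta^{-(d+2)}\wc{\psi}_{\ta_\delta}\bigl(M_\om^T(x,t)\bigr).
\]
By \eqref{dualtfinv} this is real and nonnegative (it is a product of values of $\phi$), so the sandwiching bounds are meaningful.

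It then remains to transport the tube bounds. The linear bijection $(x,t)\mapsto M_\om^T(x,t)$ maps $cT_\om^0$ onto $cT_\delta$ for every $c>0$: indeed $T_\om^0 = M_\om^{-T}T_\delta$ by definition, and dilation commutes with $M_\om^T$ because $T_\delta$ is centered at the origin, whence $1_{cT_\om^0}(x,t) = 1_{cT_\delta}(M_\om^T(x,t))$ and likewise $1_{2^{n+1}T_\om^0\setminus 2^nT_\om^0}(x,t) = 1_{2^{n+1}T_\delta\setminus 2^nT_\delta}(M_\om^T(x,t))$. Substituting $M_\om^T(x,t)$ for $(x,t)$ in the conclusion of Lemma \ref{dualtad} and invoking these identities gives precisely
\[
1_{T_\om^0}(x,t) \leq \vp_{T_\om^0}(x,t) \lsm_{E} 1_{2T_\om^0}(x,t) + \sum_{n\geq 1}2^{-Edn}1_{2^{n+1}T_\om^0\setminus 2^nT_\om^0}(x,t),
\]
completing the proof. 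The computation is otherwise routine; the only point demanding care is the transpose bookkeeping --- making sure it is $M_\om^T$ (equivalently $M_\om^{-T}$ acting on the tubes) and not $M_\om$ that appears, which one can cross-check against $\ta_\om = (\om,|\om|^2) + M_\om\ta_\delta$ in \eqref{taqtad}.
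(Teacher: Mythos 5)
Your proposal is correct and follows essentially the same route as the paper: a change of variables plus the modulation rule to get $\wc{\psi}_{\ta_\om}(y) = e(v\cdot y)\wc{\psi}_{\ta_\delta}(M_\om^T y)$, define $\vp_{T_\om^0}(x,t) := \delta^{-(d+2)}\wc{\psi}_{\ta_\delta}(M_\om^T(x,t))$, and then apply Lemma \ref{dualtad} using $T_\om^0 = M_\om^{-T}T_\delta$. The paper states this more tersely, but your transpose bookkeeping and the observation that dilation commutes with $M_\om^T$ are exactly the details it leaves implicit.
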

\begin{proof}
A change of variables gives
\begin{align*}
\delta^{-(d + 2)}\wc{\psi}_{\ta_{\om}}(x, t) = e(\om \cdot x + |\om|^{2}t)\delta^{-(d + 2)}\wc{\psi}_{\ta_{\delta}}(M_{\om}^{T}(x, t)).
\end{align*}
We let $\vp_{T_{\om}^{0}}(x, t) := \delta^{-(d + 2)}\wc{\psi}_{\ta_{\delta}}(M_{\om}^{T}(x, t))$.
Applying Lemma \ref{dualtad} and using the definition of $T_{\om}^{0}$
finishes the proof.
\end{proof}

We are now ready to prove \eqref{lowerboundsum}. Strictly speaking in our
calculations below we assume $q$ and $r$ are finite, but it is not hard to modify
our calculations to work when $q$ or $r$ is infinite. We leave this modification
to the interested reader.

\subsection{Necessity of $\frac{2}{q} + \frac{d}{r} \geq \frac{d}{2}$}\label{sec:line}
In this section we will show that
\begin{align}\label{strichartzlinelowerbound}
\mb{D}_{q, r}(\delta) \gtrsim_{q, r} \delta^{-(\frac{d}{2} - \frac{d}{r} - \frac{2}{q})}
\end{align}
for all sufficiently small $\delta$ depending on $d$.
This example is the standard bush example from Bourgain and Demeter's $(l^{2}, L^{p}_{x, t})$ decoupling for the paraboloid showing that to have $D_{p, p}(\delta) \lsm_{p, \vep} \delta^{-\vep}$, one must have $2 \leq p \leq \frac{2(d + 2)}{d}$.

We first claim that $B(0, 10^{-10}) \subset T_{\om}^{0}$ for all $\om \in \Xi$ if
$\delta$ is sufficiently small depending on $d$.
Since $T_{\om}^{0} = M_{\om}^{-T}T_{\delta}$, we need to show that $M_{\om}^{T}B(0, 10^{-10}) \subset T_{\delta}$.
But this follows from that if $(x, t) \in B(0, 10^{-10})$, then $|x_i + 2\om_i t| \leq 3 \cdot 10^{-10} \leq \delta^{-1}$ and $|t| \leq 10^{-10} \leq (2d)^{-1}\delta^{-2}$
for all $\delta$ sufficiently small depending on $d$.

Let $f_{\ta_\om}(x, t)=\delta^{-(d+2)}\wc{\psi}_{\ta_\om}(x, t) = e(\om \cdot x + |\om|^2 t)\vp_{T_{\om}^{0}}(x, t).$
Since $|\om \cdot x + |\om|^2 t| \leq 2 \cdot 10^{-10}$ for $(x, t) \in B(0, 10^{-10})$, we have $\cos(2\pi(\om \cdot x + |\om|^2 t)) \gtrsim 1$ for such $(x, t)$ and hence
\begin{align*}
|\sum_{\om \in \Xi}f_{\ta_\om}(x, t)| \geq |\sum_{\om \in \Xi}\cos(2\pi (\om \cdot x + |\om|^2 t))\vp_{T_{\om}^{0}}(x, t)| \geq |\sum_{\om \in \Xi}\cos(2\pi (\om \cdot x + |\om|^2 t))1_{T_{\om}^{0}}(x, t)|\gtrsim \# \Xi
\end{align*}
for all $(x, t) \in B(0, 10^{-10})$.
This implies that $\nms{\sum_{\om \in \Xi}f_{\ta_\om}}_{L^{q}_{t}L^{r}_{x}} \gtrsim_{q, r} \# \Xi$. On the other hand,  by choosing $E$ sufficiently large, Proposition \ref{dualtgen} implies that
$\nms{f_{\ta_{\om}}}_{L^{q}_{t}L^{r}_{x}} \lsm_{q, r} \delta^{-d/r - 2/q}$ and hence $(\sum_{\om \in \Xi}\nms{f_{\ta_\om}}_{L^{q}_{t}L^{r}_{x}}^{2})^{1/2} \lesssim_{q, r} \delta^{-d/r - 2/q}(\# \Xi)^{1/2}$.
This implies that
$(\# \Xi)^{1/2} \delta^{d/r + 2/q}\lsm_{q, r} \mb{D}_{q, r}(\delta)$.
Using the fact that $\# \Xi \sim \delta^{-d}$ we obtain \eqref{strichartzlinelowerbound}.

\subsection{Necessity of $r \geq 2$}\label{sec:r2}
In this section we will show that
\begin{align}\label{r2lower}
\mb{D}_{q, r}(\delta) \gtrsim_{q, r} \delta^{-d(\frac{1}{r} - \frac{1}{2})}
\end{align}
for all $\delta$ sufficiently small depending on $d$.

For each $\om \in \Xi$, choose points $(x_\om, 0)$ which are $\gtrsim \delta^{-4}$ separated. Let $f_{\ta_\om}$ be the Fourier inverse of
$\delta^{-(d + 2)}e((-x_\om, 0) \cdot (\xi, \eta))\psi_{\ta_\om}(\xi, \eta)$. 
Then we have
$$f_{\ta_\om}(x, t) = e(\om \cdot (x - x_\om) + |\om|^{2}t)\vp_{T_{\om}^{0} + (x_\om, 0)}(x, t)$$
where by Proposition \ref{dualtgen},
\begin{align}
\begin{aligned}\label{star}
1_{T_{\om}^{0} + (x_\om, 0)}(x, t) &\leq \vp_{T_{\om}^{0} + (x_\om, 0)}(x, t)\\
&\lsm_{E} 1_{2T_{\om}^{0} + (x_\om, 0)}(x, t) + \sum_{n \geq 1}2^{-Edn}1_{(2^{n + 1}T_{\om}^{0} + (x_\om, 0))\bs (2^{n}T_{\om}^{0} + (x_\om, 0))}(x, t)
\end{aligned}
\end{align}
for $E > 0$ which we are free to choose.
Choosing $E$ sufficiently large, this formula immediately implies that $\nms{f_{\ta_\om}}_{L^{q}_{t}L^{r}_{x}} \lsm_{q, r} \delta^{-d/r - 2/q}$ and hence
\begin{align}\label{r2eq3}
(\sum_{\om \in \Xi}\nms{f_{\ta_\om}}_{L^{q}_{t}L^{r}_{x}}^{2})^{1/2} \lsm_{q, r} \delta^{-d/r - 2/q}(\# \Xi)^{1/2}.
\end{align}
Next, the triangle inequality gives that
\begin{align*}
\nms{\sum_{\om \in \Xi}f_{\ta_\om}}_{L^{q}_{t}L^{r}_{x}} &\geq \nms{\sum_{\om \in \Xi}f_{\ta_\om}1_{\delta^{-1}T_{\om}^{0} + (x_\om, 0)}}_{L^{q}_{t}L^{r}_{x}} - \nms{\sum_{n: 2^{n} \gtrsim \delta^{-1}}\sum_{\om \in \Xi}f_{\ta_\om}1_{(2^{n + 1}T_{\om}^{0} + (x_\om, 0))\bs (2^{n}T_{\om}^{0} + (x_\om, 0))}}_{L^{q}_{t}L^{r}_{x}}\\
& := I - II.
\end{align*}
We first analyze the main term I. By how $f_{\ta_\om}$ is defined, it is equal to
\begin{align*}
\nms{\sum_{\om \in \Xi}e(\om \cdot (x - x_\om) + |\om|^{2}t)\vp_{T_{\om}^{0} + (x_\om, 0)}(x, t)1_{\delta^{-1}T_{\om}^{0} + (x_\om, 0)}(x, t)}_{L^{q}_{t}L^{r}_{x}}.
\end{align*}
Since the $x_\om$ are $\gtrsim \delta^{-4}$ separated and $T_{\om}^{0}$ is 
essentially a rectangle box of dimension
$O(\delta^{-1}) \times \cdots \times O(\delta^{-1}) \times O(\delta^{-2})$ centered at the origin, 
the $\{\delta^{-1}T_{\om}^{0} + (x_\om, 0)\}_{\om}$ are disjoint.
Therefore
\begin{align*}
|\sum_{\om \in \Xi}e(\om \cdot (x - x_\om) + |\om|^{2}t)&\vp_{T_{\om}^{0} + (x_\om, 0)}(x, t)1_{\delta^{-1}T_{\om}^{0} + (x_\om, 0)}(x, t)|^{r}\\
&= \sum_{\om \in \Xi}\vp_{T_{\om}^{0} + (x_\om, 0)}(x, t)^{r}1_{\delta^{-1}T_{\om}^{0} + (x_\om, 0)}(x, t) \geq \sum_{\om \in \Xi}1_{T_{\om}^{0} + (x_\om, 0)}(x, t)
\end{align*}
and hence  
\begin{align}\label{r2eq2}
I &\geq (\int_{\R}(\int_{\R^d}\sum_{\om \in \Xi}1_{T_{\om}^{0} + (x_\om, 0)}(x, t)\, dx)^{q/r}\, dt)^{1/q}\nonumber\\
&= (\int_{\R}(\sum_{\om \in \Xi}\int_{\R^d}\prod_{i = 1}^{d}1_{|x_i + 2\om_i t| \leq \delta^{-1}}\, dx)^{q/r}\, 1_{|t| \lsm \delta^{-2}}\, dt)^{1/q} \gtrsim_{q, r} \delta^{-d/r - 2/q}(\# \Xi)^{1/r}.
\end{align}

Now we analyze the error term II.
We use trivial bounds and \eqref{star} to obtain that it is
\begin{align}\label{r2error}
&\leq (\# \Xi)\sum_{n: 2^{n} \gtrsim \delta^{-1}}\max_{\om \in \Xi}\nms{\vp_{T_{\om}^{0} + (x_\om, 0)}1_{(2^{n + 1}T_{\om}^{0} + (x_\om, 0))\bs (2^{n}T_{\om}^{0} + (x_\om, 0))}}_{L^{q}_{t}L^{r}_{x}}\nonumber\\
&\lsm_{E} (\# \Xi)\sum_{n: 2^{n} \gtrsim \delta^{-1}}2^{-Edn}\max_{\om \in \Xi}\nms{1_{2^{n + 1}T_{\om}^{0} + (x_\om, 0))}}_{L^{q}_{t}L^{r}_{x}}\nonumber\\
&\lsm_{q, r} \delta^{-d/r - 2/q}(\# \Xi)\sum_{n: 2^{n} \gtrsim \delta^{-1}}2^{-Edn}(2^{n + 1})^{\frac{d}{r} + \frac{1}{q}}.
\end{align}
Choosing $E$ sufficiently large and using that $\# \Xi \sim \delta^{-d}$, we have
\begin{align}\label{r2geom}
(\# \Xi)\sum_{n: 2^{n} \gtrsim \delta^{-1}}2^{-Edn}(2^{n + 1})^{\frac{d}{r} + \frac{1}{q}} \lsm \delta^{-d}\sum_{n: 2^{n} \gtrsim \delta^{-1}}2^{-Edn}(2^{n + 1})^{d + 1} \lsm \delta^{100d}.
\end{align}
Combining this estimate with \eqref{r2eq2} gives that 
\begin{align*}
I - II \gtrsim_{q, r} \delta^{-d/r - 2/q}((\# \Xi)^{1/r} - O(\delta^{100d})).
\end{align*} 
Comparing this with \eqref{r2eq3} then shows that
\begin{align*}
\mb{D}_{q, r}(\delta) \gtrsim_{q, r} (\# \Xi)^{\frac{1}{r} - \frac{1}{2}} - O(\delta^{100d})(\# \Xi)^{-1/2} \gtrsim \delta^{-d(\frac{1}{r} - \frac{1}{2})}
\end{align*}
for all sufficiently small $\delta$ which completes the proof of \eqref{r2lower}.

\subsection{Necessity of $q \geq 2$}\label{sec:q2}
Here we will show that
\begin{align}\label{q2lower}
\mb{D}_{q, r}(\delta) \gtrsim_{q, r} \delta^{-d(\frac{1}{q} - \frac{1}{2})}
\end{align}
for all $\delta$ sufficiently small depending on $d$.
This argument is similar to that for \eqref{r2lower} except
now we choose points spaced out in time.

For each $\om \in \Xi$, choose points $(0, t_\om)$ which are $\gtrsim \delta^{-4}$ separated. Let $f_{\ta_\om}$ be the Fourier inverse of
$\delta^{-(d + 2)}e((0, -t_\om) \cdot (\xi, \eta))\psi_{\ta_\om}(\xi, \eta)$, then
$$
f_{\ta_\om}(x, t) =  e(\om \cdot x + |\om|^{2}(t - t_\om))\vp_{T_{\om}^{0} + (0, t_\om)}(x, t),
$$
where 
\begin{align}
\begin{aligned}\label{2star}
1_{T_{\om}^{0} + (0, t_\om)}(x, t) &\leq \vp_{T_{\om}^{0} + (0, t_\om)}(x, t)\\
&\lsm_{E} 1_{2T_{\om}^{0} + (0, t_\om)}(x, t) + \sum_{n \geq 1}2^{-Edn}1_{(2^{n + 1}T_{\om}^{0} + (0, t_\om))\bs (2^{n}T_{\om}^{0} + (0, t_\om))}(x, t)
\end{aligned}
\end{align}
for $E > 0$ which we are free to choose.
As before, choosing $E$ sufficiently large gives $\nms{f_{\ta_\om}}_{L^{q}_{t}L^{r}_{x}} \lsm_{q, r} \delta^{-d/r - 2/q}$ and hence
\begin{align}\label{q2eq3}
(\sum_{\om \in \Xi}\nms{f_{\ta_\om}}_{L^{q}_{t}L^{r}_{x}}^{2})^{1/2} \lsm_{q, r} \delta^{-d/r - 2/q}(\# \Xi)^{1/2}.
\end{align}
Next, the triangle inequality gives
\begin{align*}
\nms{\sum_{\om \in \Xi}f_{\ta_\om}}_{L^{q}_{t}L^{r}_{x}} &\geq \nms{\sum_{\om \in \Xi}f_{\ta_\om}1_{\delta^{-1}T_{\om}^{0} + (0, t_\om)}}_{L^{q}_{t}L^{r}_{x}} - \nms{\sum_{n: 2^{n} \gtrsim \delta^{-1}}\sum_{\om \in \Xi}f_{\ta_\om}1_{(2^{n + 1}T_{\om}^{0} + (0, t_\om))\bs (2^{n}T_{\om}^{0} + (0, t_\om))}}_{L^{q}_{t}L^{r}_{x}}\\
& := I - II.
\end{align*}
We first analyze the main term $I$. Substituting the definition of $f_{\ta_\om}$, we have that it is equal to
\begin{align*}
\nms{\sum_{\om \in \Xi}e(\om \cdot x + |\om|^{2}(t - t_\om))\vp_{T_{\om}^{0} + (0, t_\om)}(x, t)1_{\delta^{-1}T_{\om}^{0} + (0, t_\om)}(x, t)}_{L^{q}_{t}L^{r}_{x}}.
\end{align*}
Since the $t_\om$ are $\gtrsim \delta^{-4}$ separated, as in the proof of \eqref{r2lower}, the $\delta^{-1}T_{\om}^{0} + (0, t_\om)$ are pairwise disjoint.
Therefore
\begin{align*}
|\sum_{\om \in \Xi}e(\om \cdot x + |\om|^{2}(t - t_{\om}))&\vp_{T_{\om}^{0} + (0, t_\om)}(x, t)1_{\delta^{-1}T_{\om}^{0} + (0, t_\om)}(x, t)|^{r}\\
&= \sum_{\om \in \Xi}\vp_{T_{\om}^{0} + (0, t_\om)}(x, t)^{r}1_{\delta^{-1}T_{\om}^{0} + (0, t_\om)}(x, t) \geq \sum_{\om \in \Xi}1_{T_{\om}^{0} + (0, t_\om)}(x, t)
\end{align*}
and hence
\begin{align*}
I &\geq (\int_{\R}(\int_{\R^d}\sum_{\om \in \Xi}1_{T_{\om}^{0} + (0, t_\om)}(x, t)\, dx)^{q/r}\, dt)^{1/q}= (\int_{\R}(\sum_{\om \in \Xi}1_{|t - t_{\om}| \lsm \delta^{-2}}\int_{\R^d}\prod_{i = 1}^{d}1_{|x_i + 2\om_i t| \leq \delta^{-1}}\, dx)^{q/r})^{1/q}.
\end{align*}
Fixing $t$ and applying a change of variables $x$ gives that the above
is
\begin{align}\label{q2eq2}
= \delta^{-d/r}(\int_{\R}(\sum_{\om \in \Xi}1_{|t - t_{\om}| \lsm \delta^{-2}})^{q/r}\, dt)^{1/q} = \delta^{-d/r}(\int_{\R}\sum_{\om \in \Xi}1_{|t - t_{\om}| \lsm \delta^{-2}}\, dt)^{1/q} \sim_{q, r} \delta^{-d/r - 2/q}(\#\Xi)^{1/q}
\end{align}
where here we have used that the $t_{\om}$ are $\gtrsim \delta^{-4}$ separated.

Now we analyze the error term II.
We use trivial bounds and \eqref{2star} to obtain that it is
\begin{align*}
&\leq (\# \Xi)\sum_{n: 2^{n} \gtrsim \delta^{-1}}\max_{\om \in \Xi}\nms{\vp_{T_{\om}^{0} + (0, t_\om)}1_{(2^{n + 1}T_{\om}^{0} + (0, t_\om))\bs (2^{n}T_{\om}^{0} + (0, t_\om))}}_{L^{q}_{t}L^{r}_{x}}\\
&\lsm_{E} (\# \Xi)\sum_{n: 2^{n} \gtrsim \delta^{-1}}2^{-Edn}\max_{\om \in \Xi}\nms{1_{2^{n + 1}T_{\om}^{0} + (0, t_\om)}}_{L^{q}_{t}L^{r}_{x}}.
\end{align*}
Proceeding in the same way as estimating \eqref{r2error} and \eqref{r2geom} shows that
if we choose $E$ sufficiently large, we have $II \lsm_{q, r}\delta^{-d/r - 2/q}\delta^{100d}$.
Combining this with \eqref{q2eq2} gives that 
\begin{align*}
I - II \gtrsim_{q, r} \delta^{-d/r - 2/q}((\# \Xi)^{1/q} - O(\delta^{100d})).
\end{align*} 
Comparing this with \eqref{q2eq3} then shows that
\begin{align*}
\mb{D}_{q, r}(\delta) \gtrsim_{q, r} (\# \Xi)^{\frac{1}{q} - \frac{1}{2}} - O(\delta^{100d})(\# \Xi)^{-1/2} \gtrsim \delta^{-d(\frac{1}{q} - \frac{1}{2})}
\end{align*}
for all sufficiently small $\delta$ which completes the proof of \eqref{q2lower}.
\subsection{Necessity of $r \leq \frac{2(d + 2)}{d}$}\label{sec:r2d2d}
Finally, we will show that we have
\begin{align}\label{lowerbddlarge}
\mb{D}_{q, r}(\delta) \gtrsim_{q, r} \delta^{-(\frac{d}{2} - \frac{d + 2}{r})}
\end{align}
for all $\delta$ sufficiently small depending on $d$.

Let $\vep_0 := 1/2$. The only property that we will need about $\vep_0$ is that $0 < \vep_{0}^2 < \vep_0$. For integers $0 \leq k_1 < \delta^{-2}$ and $0 \leq k_i < \delta^{-1}$, $i = 2, \ldots, d$, let $c_{\vec{k}} := (k_1\delta^{-1 - \vep_0}, k_2\delta^{-1 - \vep_0}, \ldots, k_{d}\delta^{-1 - \vep_0}, k_1) = (\vec{k}\delta^{-1 - \vep_0}, k_1)$
and $C_{\vec{k}} := B(c_{\vec{k}}, 10^{-10})$.
Our example will consist of $\delta^{-(d + 1)}$ many bush examples from Section \ref{sec:line} each tuned so that they have constructive interference on 
the $\{C_{\vec{k}}\}$.

\subsubsection{Properties of $\{C_{\vec{k}}\}$}
By construction, the $\{C_{\vec{k}}\}$ are disjoint. We will need the following additional basic geometric properties of $\{C_{\vec{k}}\}$.

\begin{lemma}\label{Cveckcontain}
For each $\vec{k}$, we have $C_{\vec{k}} \subset T_{\om}^{0} + c_{\vec{k}}$.
\end{lemma}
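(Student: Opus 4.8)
The plan is to obtain this inclusion as an immediate translate of the containment $B(0,10^{-10}) \subset T_{\om}^{0}$ that was already established at the start of Section~\ref{sec:line}.

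First I would rewrite the ball $C_{\vec k}$ as a translate of a ball centered at the origin: since $C_{\vec k} = B(c_{\vec k}, 10^{-10})$, we have $C_{\vec k} = c_{\vec k} + B(0,10^{-10})$. Subtracting $c_{\vec k}$ from both sides of the desired inclusion, the claim $C_{\vec k} \subset T_{\om}^{0} + c_{\vec k}$ is therefore equivalent to $B(0,10^{-10}) \subset T_{\om}^{0}$.

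Second, I would quote the computation from Section~\ref{sec:line}: for every $\om \in \Xi$ and every $\delta$ small enough depending on $d$, if $(x,t) \in B(0,10^{-10})$ then $|x_i + 2\om_i t| \le 3\cdot 10^{-10} \le \delta^{-1}$ for each $i$ (using $|\om_i| \le 1$ since $\om \in [-1,1]^d$) and $|t| \le 10^{-10} \le (2d)^{-1}\delta^{-2}$, so $(x,t) \in T_{\om}^{0}$ by the definition of $T_{\om}^{0}$. Combining the two steps yields the lemma for all such $\delta$.

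I do not expect any real obstacle here; the statement is purely a bookkeeping observation that the bush wavepacket of Section~\ref{sec:line} has simply been recentered at $c_{\vec k}$. The only thing to be careful about is that the translation by $c_{\vec k}$ is applied in the same way to $C_{\vec k}$ and to $T_{\om}^{0}$, which is exactly what the displayed inclusion records, and that $\delta$ is taken small enough (depending only on $d$) so that the second step applies uniformly in $\om \in \Xi$ and in $\vec k$.
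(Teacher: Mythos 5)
Your proposal is correct and is exactly the paper's argument: the paper's proof is the one-line observation that the lemma follows from $B(0,10^{-10}) \subset T_{\om}^{0}$, shown in Section~\ref{sec:line}, and you have simply spelled out the translation step and re-quoted that computation. No further comment is needed.
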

\begin{proof}
This follows from that $B(0, 10^{-10}) \subset T_{\om}^{0}$ as shown in Section \ref{sec:line}.
\end{proof}

\begin{lemma}\label{2ndilatehd}
Suppose $\vec{k} \neq \vec{k}'$. For all sufficiently small
$\delta$ (depending on $d$) and $M$ such that $M \lsm \delta^{-\vep_{0}^2}$,
we have $(MT_{\om}^{0} + c_{\vec{k}}) \cap (MT_{\om}^{0} + c_{\vec{k}'}) = \emptyset$.
\end{lemma}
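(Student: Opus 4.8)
The plan is to argue by contradiction. Assume that for some $\vec{k} \neq \vec{k}'$ the two translated tubes $MT_{\om}^{0} + c_{\vec{k}}$ and $MT_{\om}^{0} + c_{\vec{k}'}$ have a common point; the goal is to show this is impossible once $\delta$ is small enough in terms of $d$, for every $M \lsm \delta^{-\vep_0^2}$.

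First I would use the standard ``doubling'' observation: $T_{\om}^{0}$ is a bounded, convex, origin-symmetric parallelepiped --- it is cut out by the symmetric conditions $|x_i + 2\om_i t| \leq \delta^{-1}$ ($i = 1, \ldots, d$) and $|t| \leq (2d)^{-1}\delta^{-2}$ --- so $MT_{\om}^{0} - MT_{\om}^{0} = 2MT_{\om}^{0}$. Hence a common point of the two translates forces $c_{\vec{k}} - c_{\vec{k}'} \in 2MT_{\om}^{0}$. Since $c_{\vec{k}} - c_{\vec{k}'} = ((\vec{k} - \vec{k}')\delta^{-1-\vep_0},\, k_1 - k_1')$, unwinding the definition of $T_{\om}^{0}$ turns this membership into the system $|k_1 - k_1'| \leq M d^{-1}\delta^{-2}$ together with $|(k_i - k_i')\delta^{-1-\vep_0} + 2\om_i(k_1 - k_1')| \leq 2M\delta^{-1}$ for each $i = 1, \ldots, d$. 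Only the spatial inequalities will actually be needed.

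Next I would split into two cases depending on whether $k_1 = k_1'$. If $k_1 \neq k_1'$, apply the $i = 1$ spatial inequality: since $\om \in [-1,1]^d$ gives $|2\om_1| \leq 2 \leq \frac{1}{2}\delta^{-1-\vep_0}$ for $\delta$ small, one has $|\delta^{-1-\vep_0} + 2\om_1| \geq \frac{1}{2}\delta^{-1-\vep_0}$, whence $1 \leq |k_1 - k_1'| \leq 4M\delta^{\vep_0} \lsm \delta^{\vep_0 - \vep_0^2}$, the last step using $M \lsm \delta^{-\vep_0^2}$; because $\vep_0 - \vep_0^2 > 0$ this fails for all sufficiently small $\delta$. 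If instead $k_1 = k_1'$, then since $\vec{k} \neq \vec{k}'$ there is some index $i \in \{2, \ldots, d\}$ with $k_i \neq k_i'$, and the $i$-th spatial inequality collapses to $\delta^{-1-\vep_0} \leq |k_i - k_i'|\delta^{-1-\vep_0} \leq 2M\delta^{-1} \lsm \delta^{-1-\vep_0^2}$, i.e. $\delta^{-(\vep_0 - \vep_0^2)} \lsm 1$, again impossible for $\delta$ small. Either way we reach a contradiction, which proves the disjointness.

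I do not expect any genuinely hard step: this is a short geometric computation. The points that need care are the bookkeeping --- translating ``the translates meet'' into ``$c_{\vec{k}} - c_{\vec{k}'} \in 2MT_{\om}^{0}$'' and then into coordinatewise inequalities --- and verifying that every ``$\delta$ sufficiently small'' threshold depends only on $d$ (through the $(2d)^{-1}$ in $T_{\om}^{0}$, the uniform bound $|\om_i| \leq 1$, and the implied constant in $M \lsm \delta^{-\vep_0^2}$) and not on $\vec{k}, \vec{k}', M$ or $\om$. Conceptually, the hypothesis $\vep_0^2 < \vep_0$ is exactly what makes this run: the separation scale $\delta^{-1-\vep_0}$ built into the centers $c_{\vec{k}}$ beats, by the positive power $\delta^{\vep_0 - \vep_0^2}$, the largest dilation $M \lsm \delta^{-\vep_0^2}$ permitted.
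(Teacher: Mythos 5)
Your proposal is correct and follows essentially the same route as the paper: a common point yields the coordinatewise inequalities $|(k_i-k_i')\delta^{-1-\vep_0}+2\om_i(k_1-k_1')|\leq 2M\delta^{-1}$ (your packaging via $c_{\vec{k}}-c_{\vec{k}'}\in 2MT_{\om}^{0}$ is just the paper's subtraction of the two membership conditions), followed by the same two cases $k_1\neq k_1'$ and $k_1=k_1'$, each killed by $\vep_0-\vep_0^2>0$ for $\delta$ small depending only on $d$.
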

\begin{proof}
Suppose there was an $(x, t) \in (MT_{\om}^{0} + c_{\vec{k}}) \cap (MT_{\om}^{0} + c_{\vec{k}'})$. Then
$|(x_i - k_i \delta^{-1 - \vep_0}) + 2\om_i (t - k_1)| \leq M\delta^{-1}$
and
$|(x_i - k_{i}'\delta^{-1 - \vep_0}) + 2\om_i (t - k_{1}'
)| \leq M\delta^{-1}$
for $i = 1, 2, \ldots, d$.
Combining these two sets of inequalities gives
\begin{align}\label{kkp}
|(k_{i} - k_{i}')\delta^{-1 - \vep_0} + 2\om_i (k_1 - k_{1}')| \leq 2M\delta^{-1}
\end{align}
for $i = 1, 2, \ldots, d$.
If $k_{1} \neq k_{1}'$, then $|k_1 - k_{1}'| \geq 1$ and looking at the $i = 1$ case of \eqref{kkp} gives
\begin{align*}
|\delta^{-1 - \vep_0} + 2\om_1| \leq |k_{1} - k_{1}'||\delta^{-1 - \vep_0} + 2\om_1| \lsm \delta^{-1 - \vep_0^2}.
\end{align*}
But by our choice of $\vep_0$, this cannot happen for all $\delta$ sufficiently small.
Thus we must have $k_1 = k_1'$. This reduces \eqref{kkp} to
$|k_{i} - k_{i}'| \leq 2M\delta^{\vep_0} \lsm \delta^{\vep_0 - \vep_{0}^2}$ for $i = 2, \ldots, d$. But if $\delta$ is sufficiently small, then we must have
$|k_i - k_{i}'| = 0$ as both $k_i$ and $k_{i}'$ are integers.
But this contradicts that $\vec{k} \neq \vec{k}'$.
Therefore if $\vec{k} \neq \vec{k}'$, $(MT_{\om}^{0} + c_{\vec{k}}) \cap (MT_{\om}^{0} + c_{\vec{k}'})$ must be empty. This completes the proof of the lemma.
\end{proof}

\begin{lemma}\label{keycjkl}
For all sufficiently small $\delta$ (depending on $d$) and $M$ such that $M \lsm \delta^{-\vep_0^2}$, we have $C_{\vec{k}'} \subset MT_{\om}^{0} + c_{\vec{k}}$ if and only if $\vec{k}' = \vec{k}$.
\end{lemma}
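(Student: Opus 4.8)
The plan is to deduce this directly from Lemmas \ref{Cveckcontain} and \ref{2ndilatehd}, with essentially no new computation; the geometric content has already been isolated in those two lemmas.

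For the ``if'' direction, suppose $\vec{k}' = \vec{k}$. Lemma \ref{Cveckcontain} gives $C_{\vec{k}} \subset T_{\om}^{0} + c_{\vec{k}}$, and since $M \geq 1$ and $T_{\om}^{0}$ is a parallelpiped centered at the origin, dilating it by $M$ only enlarges it, so $T_{\om}^{0} + c_{\vec{k}} \subset MT_{\om}^{0} + c_{\vec{k}}$. Hence $C_{\vec{k}'} = C_{\vec{k}} \subset MT_{\om}^{0} + c_{\vec{k}}$.

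For the ``only if'' direction I would argue by contradiction: suppose $C_{\vec{k}'} \subset MT_{\om}^{0} + c_{\vec{k}}$ but $\vec{k}' \neq \vec{k}$. Applying Lemma \ref{Cveckcontain} to $\vec{k}'$ and again using $M \geq 1$ gives $C_{\vec{k}'} \subset T_{\om}^{0} + c_{\vec{k}'} \subset MT_{\om}^{0} + c_{\vec{k}'}$. Since $C_{\vec{k}'}$ is nonempty (it contains $c_{\vec{k}'}$), we conclude $(MT_{\om}^{0} + c_{\vec{k}}) \cap (MT_{\om}^{0} + c_{\vec{k}'}) \supset C_{\vec{k}'} \neq \emptyset$. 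But $M \lsm \delta^{-\vep_0^2}$ and $\delta$ is sufficiently small depending on $d$, so Lemma \ref{2ndilatehd} forces this intersection to be empty, a contradiction. Therefore $\vec{k}' = \vec{k}$.

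There is no real obstacle here: the work has been front-loaded into Lemma \ref{2ndilatehd}, whose proof is where the separation $\delta^{-1-\vep_0}$ of the centers $c_{\vec{k}}$ and the inequality $0 < \vep_0^2 < \vep_0$ are actually exploited. The only point to watch is purely bookkeeping — the implied constant in the hypothesis $M \lsm \delta^{-\vep_0^2}$ of this lemma should be taken no larger than the one appearing in Lemma \ref{2ndilatehd}, and ``sufficiently small $\delta$'' should be read consistently across the two statements, which is harmless.
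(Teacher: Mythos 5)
Your proof is correct and follows essentially the same route as the paper: the ``if'' direction is Lemma \ref{Cveckcontain} plus the trivial containment $T_{\om}^{0} \subset MT_{\om}^{0}$, and the ``only if'' direction combines Lemma \ref{Cveckcontain} with the disjointness from Lemma \ref{2ndilatehd}. The only (harmless) addition is your explicit remark that $M \geq 1$ and that the implied constants must be read consistently, which the paper leaves implicit.
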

\begin{proof}
If $\vec{k} = \vec{k}'$, then the conclusion follows from Lemma \ref{Cveckcontain}. 
On the other hand, if $C_{\vec{k}'} \subset MT_{\om}^{0} + c_{\vec{k}}$, then by Lemma \ref{Cveckcontain}, $(MT_{\om}^{0} + c_{\vec{k}'}) \cap (MT_{\om}^{0} + c_{\vec{k}}) \neq \emptyset$ which by Lemma \ref{2ndilatehd} implies $\vec{k} = \vec{k}'$.
This completes the proof of the lemma.
\end{proof}

\subsubsection{The example}
Let $f_{\ta_\om}$ be the Fourier inverse of
\begin{align*}
\delta^{-(d + 2)}\sum_{\st{0 \leq k_1 < \delta^{-2}\\0 \leq k_i < \delta^{-1}\\i = 2, \ldots, d}}  e(-c_{\vec{k}} \cdot (\xi, \eta)) \psi_{\ta_\om}(\xi, \eta)
\end{align*}
where $\xi = (\xi_1, \ldots, \xi_{d})$ and $\eta \in \R$.
Therefore
\begin{align*}
f_{\ta_\om}(x, t) = \sum_{\st{0 \leq k_1 < \delta^{-2}\\0 \leq k_i < \delta^{-1}\\i = 2, \ldots, d}}e(\om \cdot (x - \vec{k}\delta^{-1 - \vep_0}) + |\om|^{2}(t - k_1))\vp_{T_{\om}^{0} + c_{\vec{k}}}(x, t).
\end{align*}
We then write
\begin{align*}
\sum_{\om \in \Xi}f_{\ta_\om}(x, t) &= \sum_{\om \in \Xi}\sum_{\st{0 \leq k_1 < \delta^{-2}\\0 \leq k_i < \delta^{-1}\\i = 2, \ldots, d}}e(\om \cdot (x - \vec{k}\delta^{-1 - \vep_0}) + |\om|^{2}(t - k_1))\vp_{T_{\om}^{0} + c_{\vec{k}}}(x, t)1_{\delta^{-\vep_0^2}T_{\om}^{0} + c_{\vec{k}}}(x, t)\\
&\hspace{0.25in} + \sum_{\om \in \Xi}\sum_{\st{0 \leq k_1 < \delta^{-2}\\0 \leq k_i < \delta^{-1}\\i = 2, \ldots, d}}e(\om \cdot (x - \vec{k}\delta^{-1 - \vep_0}) + |\om|^{2}(t - k_1))\vp_{T_{\om}^{0} + c_{\vec{k}}}(x, t)1_{\R^{d+1} \bs (\delta^{-\vep_0^2}T_{\om}^{0} + c_{\vec{k}})}(x, t)\\
&:= I + II
\end{align*}
and hence
\begin{align*}
\nms{\sum_{\om \in \Xi}f_{\ta_\om}}_{L^{q}_{t}L^{r}_{x}} \geq \nms{I}_{L^{q}_{t}L^{r}_{x}} - \nms{II}_{L^{q}_{t}L^{r}_{x}}.
\end{align*}
We first have
\begin{align}\label{IIhd1}
\nms{II}_{L^{q}_{t}L^{r}_{x}}&\lsm (\# \Xi)\delta^{-(d+1)}\max_{\om, \vec{k}}\nms{\vp_{T_{\om}^{0} + c_{\vec{k}}}(x, t)1_{\R^{d+1} \bs (\delta^{-\vep_0^2}T_{\om}^{0} + c_{\vec{k}})}}_{L^{q}_{t}L^{r}_{x}}\nonumber\\
&\lsm_{E} (\# \Xi)\delta^{-(d+1)}\max_{\om, \vec{k}}\sum_{n: 2^{n} \gtrsim \delta^{-\vep_0^2}}2^{-Edn}\nms{1_{(2^{n + 1}T_{\om}^{0} + c_{\vec{k}}) \bs (2^{n}T_{\om}^{0} + c_{\vec{k}})}}_{L^{q}_{t}L^{r}_{x}}\nonumber\\
&\lsm_{q, r} (\# \Xi) \delta^{-(d+1) - d/r - 2/q}\sum_{n: 2^{n} \gtrsim \delta^{-\vep_0^2}}2^{-Edn}(2^{n + 1})^{\frac{d}{r} + \frac{1}{q}}\nonumber\\
&\lsm_{q, r} \delta^{-3d- 3}\sum_{n: 2^{n} \gtrsim \delta^{-\vep_0^2}}2^{-(Ed - d - 1)n}
\end{align}
where in the second inequality we have used Proposition \ref{dualtgen} and in the last inequality we have used that $q, r \geq 1$ and $\# \Xi \sim \delta^{-d}$.
Choosing $E = 2000/\vep_{0}^2 + (d + 1)/d$ then shows that the above is
$\lsm_{q, r} \delta^{1000d}$.

Next, we have
\begin{align*}
&\|I\|_{L^{q}_{t}L^{r}_{x}} \geq \nms{I \cdot 1_{\bigcup_{\vec{k}}}C_{\vec{k}}}_{L^{q}_{t}L^{r}_{x}}\\
& = \nms{\sum_{\om \in \Xi}\sum_{\st{0 \leq k_1 < \delta^{-2}\\0 \leq k_i < \delta^{-1}\\i = 2, \ldots, d}}\sum_{\st{0 \leq k_{1}' < \delta^{-2}\\0 \leq k_{i}' < \delta^{-1}\\i = 2, \ldots, d}}e(\om \cdot (x - \vec{k}\delta^{-1 - \vep_0}) + |\om|^{2}(t - k_1))\vp_{T_{\om}^{0} + c_{\vec{k}}}(x, t)1_{\delta^{-\vep_0^2}T_{\om}^{0} + c_{\vec{k}}}(x, t)1_{C_{\vec{k}'}}}_{L^{q}_{t}L^{r}_{x}}.
\end{align*}
By Lemma \ref{keycjkl}, for fixed $\vec{k}$, the only $\{C_{\vec{k}'}\}$ that intersects $\delta^{-\vep_{0}^2}T_{\om}^{0} + c_{\vec{k}}$ is $C_{\vec{k}}$. Therefore the above is equal to
\begin{align*}
&\nms{\sum_{\om \in \Xi}\sum_{\st{0 \leq k_1 < \delta^{-2}\\0 \leq k_i < \delta^{-1}\\i = 2, \ldots, d}}e(\om \cdot (x - \vec{k}\delta^{-1 - \vep_0}) + |\om|^{2}(t - k_1))\vp_{T_{\om}^{0} + c_{\vec{k}}}(x, t)1_{\delta^{-\vep_0^2}T_{\om}^{0} + c_{\vec{k}}}(x, t)1_{C_{\vec{k}}}(x, t)}_{L^{q}_{t}L^{r}_{x}}\\
&\geq \nms{\sum_{\om \in \Xi}\sum_{\st{0 \leq k_1 < \delta^{-2}\\0 \leq k_i < \delta^{-1}\\i = 2, \ldots, d}}\cos(2\pi(\om \cdot (x - \vec{k}\delta^{-1 - \vep_0}) + |\om|^{2}(t - k_1)))\vp_{T_{\om}^{0} + c_{\vec{k}}}(x, t)1_{\delta^{-\vep_0^2}T_{\om}^{0} + c_{\vec{k}}}(x, t)1_{C_{\vec{k}}}(x, t)}_{L^{q}_{t}L^{r}_{x}}.
\end{align*}
Since $|\om \cdot (x - \vec{k}\delta^{-1 - \vep_0}) + |\om|^2 (t - k_1)| \leq 2 \cdot 10^{-10}$ for $(x, t) \in C_k$, we have $\cos(2\pi(\om \cdot (x - \vec{k}\delta^{-1 - \vep_0}) + |\om|^{2}(t - k_1))) \gtrsim 1$ for such $(x, t)$.
Therefore the above is
\begin{align}\label{3star}
\gtrsim_{q, r} \nms{\sum_{\om \in \Xi}\sum_{\st{0 \leq k_1 < \delta^{-2}\\0 \leq k_i < \delta^{-1}\\i = 2, \ldots, d}}1_{T_{\om}^{0} + c_{\vec{k}}}(x, t)1_{\delta^{-\vep_0^2}T_{\om}^{0} + c_{\vec{k}}}(x, t)1_{C_{\vec{k}}}(x, t)}_{L^{q}_{t}L^{r}_{x}} \geq (\# \Xi)\nms{\sum_{\st{0 \leq k_1 < \delta^{-2}\\0 \leq k_i < \delta^{-1}\\i = 2, \ldots, d}}1_{C_{\vec{k}}}(x, t)}_{L^{q}_{t}L^{r}_{x}}
\end{align}
where in the second inequality we have made use of Lemma \ref{Cveckcontain}.
Since the $\{C_{\vec{k}}\}$ are disjoint,
\begin{align*}
|\sum_{\st{0 \leq k_1 < \delta^{-2}\\0 \leq k_i < \delta^{-1}\\i = 2, \ldots, d}}1_{C_{\vec{k}}}(x, t)|^{r} = \sum_{\st{0 \leq k_1 < \delta^{-2}\\0 \leq k_i < \delta^{-1}\\i = 2, \ldots, d}}1_{C_{\vec{k}}}(x, t) \geq \sum_{\st{0 \leq k_1 < \delta^{-2}\\0 \leq k_i < \delta^{-1}\\i = 2, \ldots, d}}1_{|t - k_1| \leq 10^{-11}}\prod_{i = 1}^{d}1_{|x_i - k_{i}\delta^{-1 - \vep_0}| \leq 10^{-11}/\sqrt{d}}
\end{align*}
which implies that
\begin{align*}
\nms{\sum_{\st{0 \leq k_1 < \delta^{-2}\\0 \leq k_i < \delta^{-1}\\i = 2, \ldots, d}}1_{C_{\vec{k}}}}_{L^{q}_{t}L^{r}_{x}} &\geq (\int_{\R}(\int_{\R^d}\sum_{\st{0 \leq k_1 < \delta^{-2}\\0 \leq k_i < \delta^{-1}\\i = 2, \ldots, d}}1_{|t - k_1| \leq 10^{-11}}\prod_{i = 1}^{d}1_{|x_i - k_{i}\delta^{-1 - \vep_0}| \leq 10^{-11}/\sqrt{d}}\, dx)^{q/r}\, dt)^{1/q}\\
&=(\int_{\R}(\sum_{0 \leq k_1 < \delta^{-2}}1_{|t - k_1| \leq 10^{-11}}\sum_{\st{0 \leq k_i < \delta^{-1}\\i = 2, 3, \ldots, d}}\int_{\R^d}\prod_{i = 1}^{d}1_{|x_i - k_{i}\delta^{-1 - \vep_0}| \leq 10^{-11}/\sqrt{d}}\, dx)^{q/r}\, dt)^{1/q}\\
&\gtrsim_{q, r} \delta^{-\frac{d - 1}{r}}(\int_{\R}(\sum_{0 \leq k_1 < \delta^{-2}}1_{|t - k_1| \leq 10^{-11}})^{q/r}\, dt)^{1/q}\\
&= \delta^{-\frac{d - 1}{r}}(\int_{\R}1_{\bigcup_{0 \leq k_1 < \delta^{-2}}1_{|t - k_1| \leq 10^{-11}}}\, dt)^{1/q} \gtrsim_{q, r}\delta^{-\frac{d - 1}{r} - \frac{2}{q}}.
\end{align*}
Thus inserting this into \eqref{3star}, we have obtained that for all $\delta$ sufficiently small, we have
\begin{align}\label{tuninglowerbdhd}
\nms{\sum_{\om \in \Xi}f_{\ta_{\om}}}_{L^{q}_{t}L^{r}_{x}} \gtrsim_{q, r} (\# \Xi)\delta^{-\frac{d - 1}{r} - \frac{2}{q}} - \delta^{1000d}.
\end{align}

Next we compute $(\sum_{\om \in \Xi}\nms{f_{\ta_{\om}}}_{L^{q}_{t}L^{r}_{x}}^{2})^{1/2}$. For any $\om$, we have
\begin{align*}
|f_{\ta_{\om}}(x, t)| &= |\sum_{\st{0 \leq k_1 < \delta^{-2}\\0 \leq k_i < \delta^{-1}\\i = 2, \ldots, d}}e(\om \cdot (x - \vec{k}\delta^{-1 - \vep_0}) + |\om|^2 (t - k_1))\vp_{T_{\om}^{0} + c_{\vec{k}}}(x, t)|\\
&\leq \sum_{\st{0 \leq k_1 < \delta^{-2}\\0 \leq k_i < \delta^{-1}\\i = 2, \ldots, d}}\vp_{T_{\om}^{0} + c_{\vec{k}}}(x, t)\lsm_{E}\sum_{\st{0 \leq k_1 < \delta^{-2}\\0 \leq k_i < \delta^{-1}\\i = 2, \ldots, d}}\sum_{n \geq 0}2^{-Edn}1_{2^{n + 1}T_{\om}^{0} + c_{\vec{k}}}(x, t)\\
&= \sum_{n \geq 0}2^{-Edn}\sum_{\st{0 \leq k_1 < \delta^{-2}\\0 \leq k_i < \delta^{-1}\\i = 2, \ldots, d}}1_{2^{n + 1}T_{\om}^{0} + c_{\vec{k}}}(x, t)
\end{align*}
where in the last inequality we have used Proposition \ref{dualtgen}.
Therefore
\begin{align*}
\nms{f_{\ta_{\om}}}_{L^{q}_{t}L^{r}_{x}} &\leq \sum_{n: 2^{n} \lsm \delta^{-\vep_0^2}}2^{-Edn}\nms{\sum_{\st{0 \leq k_1 < \delta^{-2}\\0 \leq k_i < \delta^{-1}\\i = 2, \ldots, d}}1_{2^{n + 1}T_{\om}^{0} + c_{\vec{k}}}}_{L^{q}_{t}L^{r}_{x}} + \sum_{n: 2^{n} \gtrsim \delta^{-\vep_0^2}}2^{-Edn}\nms{\sum_{\st{0 \leq k_1 < \delta^{-2}\\0 \leq k_i < \delta^{-1}\\i = 2, \ldots, d}}1_{2^{n + 1}T_{\om}^{0} + c_{\vec{k}}}}_{L^{q}_{t}L^{r}_{x}}\\
&:= III + IV.
\end{align*}

Treating IV as in how we treated II in \eqref{IIhd1}, we have
\begin{align}\label{ivbd1}
IV \leq \delta^{-(d + 1) - d/r - 2/q}\sum_{n: 2^{n} \gtrsim \delta^{-\vep_0^2}}2^{-Edn}(2^{n + 1})^{\frac{d}{r} + \frac{1}{q}} \lsm_{q, r} \delta^{1000d}
\end{align}
if we choose $E$ sufficiently large. We now turn to III. Since $2^{n} \lsm \delta^{-\vep_0^2}$,
Lemma \ref{2ndilatehd} shows that for fixed $n$, $\{2^{n + 1}T_{\om}^{0} + c_{\vec{k}}\}_{\vec{k}}$ is a disjoint collection of tubes. Therefore
\begin{align*}
|\sum_{\st{0 \leq k_1 < \delta^{-2}\\0 \leq k_i < \delta^{-1}\\i = 2, \ldots, d}}1_{2^{n + 1}T_{\om}^{0} + c_{\vec{k}}}(x, t)|^{r} = \sum_{\st{0 \leq k_1 < \delta^{-2}\\0 \leq k_i < \delta^{-1}\\i = 2, \ldots, d}}1_{2^{n + 1}T_{\om}^{0} + c_{\vec{k}}}(x, t).
\end{align*}
Thus
\begin{align*}
\nms{\sum_{\st{0 \leq k_1 < \delta^{-2}\\0 \leq k_i < \delta^{-1}\\i = 2, \ldots, d}}1_{2^{n + 1}T_{\om}^{0} + c_{\vec{k}}}}_{L^{q}_{t}L^{r}_{x}} &= (\int_{\R}(\sum_{\st{0 \leq k_1 < \delta^{-2}\\0 \leq k_i < \delta^{-1}\\i = 2, \ldots, d}}\int_{\R^d}1_{2^{n + 1}T_{\om}^{0} + c_{\vec{k}}}(x, t)\, dx)^{q/r}\, dt)^{1/q}\\
&=(\int_{\R}(\delta^{-(d - 1)}\sum_{0 \leq k_1 < \delta^{-2}}\int_{\R^d}1_{2^{n + 1}T_{\om}^{0}}(x, t- k_1)\, dx)^{q/r}\, dt)^{1/q}.
\end{align*}
For $(x, t)$ such that $(x, t - k_1) \in 2^{n + 1}T_{\om}^{0}$ and $0 \leq k_1 < \delta^{-2}$, we have that
$(x, t)$ satisfy $|x_i + 2\om_i t| \leq 2^{n + 1}\delta^{-1}$
for each $i$ and $|t| \leq 2^{n + 1}(2d)^{-1}\delta^{-2} + \delta^{-2} \leq 2^{n + 1}\delta^{-2}$. Therefore the above centered expression is  $\lsm_{q, r} \delta^{-\frac{d + 1}{r}}(2^{n + 1})^{\frac{d}{r} + \frac{1}{q}}\delta^{-\frac{d}{r} - \frac{2}{q}}$.
This gives that
\begin{align*}
III \lsm_{q, r} \delta^{-\frac{2d + 1}{r} - \frac{2}{q}}\sum_{n: 2^{n} \lsm \delta^{-\vep_0^2}}2^{-Edn}(2^{n + 1})^{\frac{d}{r} + \frac{1}{q}} \lsm \delta^{-\frac{2d + 1}{r} - \frac{2}{q}}
\end{align*}
if we choose $E$ sufficiently large. Combining this with \eqref{ivbd1} gives that
\begin{align*}
(\sum_{\om \in \Xi}\nms{f_{\ta_{\om}}}_{L^{q}_{t}L^{r}_{x}}^{2})^{1/2} \lsm_{q, r} (\# \Xi)^{1/2}\delta^{-\frac{2d + 1}{r} - \frac{2}{q}}.
\end{align*}
Comparing this with \eqref{tuninglowerbdhd} gives
\begin{align*}
\mb{D}_{q, r}(\delta) \gtrsim_{q, r} (\# \Xi)^{1/2}\delta^{\frac{d + 2}{r}} - \delta^{1000d}(\# \Xi)^{-1/2}\delta^{\frac{2d + 1}{r} + \frac{2}{q}} \gtrsim \delta^{-(\frac{d}{2} - \frac{d + 2}{r})}
\end{align*}
for all $\delta$ sufficiently small depending on $d$ which proves \eqref{lowerbddlarge}.
\section{Proof of sufficiency of \eqref{qrregion} for Theorem \ref{main}}

Using trivial bounds, it suffices to prove \eqref{mainbd} just for all
$\delta$ sufficiently small depending on $d$.
\subsection{Reduction to smooth decoupling}\label{red}
It will turn out to be more convenient to control a certain more
smoothed version of decoupling.
Let $\chi$ be a Schwartz function which is equal to $1$ on $[-1, 1]^{d + 1}$
and vanishes outside $[-2, 2]^{d + 1}$. If $A_{\om}$ is the linear transformation such that $A_{\om}\ta_{\om} = [-1, 1]^{d + 1}$,
then we define $\chi_{\ta_{\om}} = \chi \circ A_{\om}$. This function is then
equal to 1 on $\ta_{\om}$ and vanishes outside $2\ta_{\om}$. 

For $\ta \subset \R^{d + 1}$, we now define $\wt{P}_{\ta}f := (\wh{f}\chi_{\ta})^{\vee}$. Let $c := 100\sqrt{d}$. We have chosen $c$
sufficiently large so that if $\om$ and $\om'$ are $c\delta$-separated,
then $4\ta_{\om} \cap 4\ta_{\om'} \subset \ta_{\om, 4\delta} \cap \ta_{\om', 4\delta} = \emptyset$.
Given a $c\delta$-separated
subset $\Xi'$ of $[-1, 1]^{d}$, let $\wt{D}_{q, r}(\delta, \Xi')$ be the best constant such that
\begin{align}\label{smoothdef}
\nms{\sum_{\om \in \Xi'}\wt{P}_{\ta_\om}g}_{L^{q}_{t}L^{r}_{x}(\R^d \times \R)} \leq \wt{D}_{q, r}(\delta, \Xi')(\sum_{\om \in \Xi'}\nms{\wt{P}_{\ta_{\om}}g}_{L^{q}_{t}L^{r}_{x}(\R^d \times \R)}^{2})^{1/2}
\end{align}
for all $g$ with Fourier transform supported in $\bigcup_{\om \in \Xi'}2\ta_{\om}$. 
Finally, we let 
\begin{align}\label{smoothdecoupling}
\wt{D}_{q, r}(\delta) := \sup_{\Xi'}\wt{D}_{q, r}(\delta, \Xi')
\end{align}
where the supremum is taken over all $c\delta$-separated subsets $\Xi'$
of $[-1, 1]^{d}$.
The help the reader better recall the notation,
we have opted to use $\wt{D}$ and $\wt{P}$ to denote
the smoothed versions of $D$ and $P$.

The relationship between the decoupling constants \eqref{sharpdecoupling} and \eqref{smoothdecoupling} is contained in the following two propositions.
\begin{prop}[Smooth decoupling implies rough decoupling]\label{smoothrough}
We have $$D_{q, r}(\delta) \lsm \wt{D}_{q, r}(\delta).$$
\end{prop}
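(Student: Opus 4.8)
The plan is to deduce rough decoupling from smooth decoupling by replacing the sharp cutoffs $1_{\ta_\om}$ with the smooth cutoffs $\chi_{\ta_\om}$, at the cost of losing a constant but no power of $\delta$. Fix a $\delta$-separated set $\Xi$ and an $f$ with $\wh f$ supported in $\bigcup_{\om \in \Xi}\ta_\om$. The first step is to pass from a $\delta$-separated set to a bounded number of $c\delta$-separated subsets: since $\Xi \subset [-1,1]^d$ is $\delta$-separated, a pigeonholing argument shows that $\Xi$ can be partitioned into $O_d(c^d) = O_d(1)$ many $c\delta$-separated subsets $\Xi_1, \ldots, \Xi_L$. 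By the triangle inequality in $L^q_t L^r_x$ on the left and the embedding $\ell^2 \hookrightarrow \ell^2$ (just summing the $L$ pieces), it suffices to prove the desired bound for each $\Xi_j$ separately, i.e.\ we may as well assume $\Xi$ itself is $c\delta$-separated.

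Now with $\Xi$ $c\delta$-separated, the second step is the actual smoothing. Since $\chi_{\ta_\om} = 1$ on $\ta_\om$, for each $\om$ we have $P_{\ta_\om}f = P_{\ta_\om}(\wt P_{\ta_\om} f)$, and in particular $\sum_{\om}P_{\ta_\om}f = \sum_\om P_{\ta_\om}(\wt P_{\ta_\om}f)$. Set $g := \sum_{\om \in \Xi}\wt P_{\ta_\om}f$; since the $2\ta_\om$ are pairwise disjoint (as $\Xi$ is $c\delta$-separated and $c = 100\sqrt d$), $\wh g$ is supported in $\bigcup_\om 2\ta_\om$ with $\wh g\, \chi_{\ta_\om} = \wh f\, \chi_{\ta_\om}$, so $\wt P_{\ta_\om}g = \wt P_{\ta_\om}f$ and $g$ is a legitimate input for the smooth decoupling inequality \eqref{smoothdef}. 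We then estimate
\begin{align*}
\nms{\sum_{\om \in \Xi}P_{\ta_\om}f}_{L^q_t L^r_x} = \nms{\sum_{\om \in \Xi}P_{\ta_\om}g}_{L^q_t L^r_x}.
\end{align*}
The third step handles the left side: each $P_{\ta_\om}$ is a Fourier multiplier with symbol $1_{\ta_\om}$, and since $\ta_\om$ is an affine image of a fixed box, $1_{\ta_\om}$ is (uniformly in $\om$) an $L^r_x$-to-$L^r_x$ and $L^q_t$-to-$L^q_t$ bounded multiplier after rescaling — but $1$ is not a nice multiplier, so instead one writes $P_{\ta_\om} g = (\wh g\, 1_{\ta_\om})^\vee$ and uses that $1_{\ta_\om} = 1_{\ta_\om}\chi_{2\ta_\om}$-type manipulations do not obviously help; the cleanest route is to observe $P_{\ta_\om}\bigl(\sum_\om P_{\ta_\om}g\bigr)$ trivially and instead bound $\nms{\sum_\om P_{\ta_\om}g}_{L^q_t L^r_x} \lsm \nms{\sum_\om \wt P_{\ta_\om}g}_{L^q_t L^r_x}$ directly. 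This last inequality is the key point and is where a mixed-norm boundedness of the projection onto $\bigcup_\om \ta_\om$ is needed; since $\bigcup_\om\ta_\om \subset \bigcup_\om 2\ta_\om$ and the $2\ta_\om$ are disjoint, the operator $\sum_\om P_{\ta_\om}$ applied to functions Fourier-supported in $\bigcup 2\ta_\om$ equals the multiplier with symbol $\sum_\om 1_{\ta_\om}$, and one reduces to showing this is bounded on $L^q_t L^r_x$ with a constant independent of $\delta$ and $\Xi$. I expect this is handled by writing $\sum_\om 1_{\ta_\om} = 1_{S}$ for $S = \bigcup \ta_\om$ and noting $1_S = \sum_\om 1_S \chi_{\ta_\om}$ where, after the affine rescaling $A_\om$ that maps $\ta_\om$ to the unit cube, $1_S\chi_{\ta_\om}$ becomes a cube multiplier, which is bounded on $L^q_t L^r_x$ for $1 < q, r < \infty$ by iterated one-dimensional Hilbert transform / Marcinkiewicz-type bounds; the endpoint cases $q$ or $r \in \{1,\infty\}$ are handled separately or excluded since \eqref{qrregion} keeps us in the open range.

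The main obstacle is precisely this mixed-norm multiplier boundedness of $\sum_\om P_{\ta_\om}$ with constant uniform in $\delta$ and in the configuration $\Xi$: the symbol $\sum_\om 1_{\ta_\om}$ is the indicator of a union of many thin tilted boxes, and one must not pick up a factor growing with $\#\Xi$. The resolution should be that each $P_{\ta_\om}$ is, up to the affine map $A_\om$ (which is a composition of anisotropic dilations and a shear $M_\om$, both of bounded "complexity"), a projection onto a fixed unit cube, and such projections are bounded on $L^q_t L^r_x$ with a constant depending only on $d$; because the $\ta_\om$ have bounded overlap (indeed are disjoint) one can sum without loss. I would invoke a standard lemma — e.g.\ that a Fourier projection onto a ball or box, suitably normalized, is bounded on $L^q_t L^r_x$ for $1<q,r<\infty$ (using that $L^q_t L^r_x$ is a UMD-valued $L^q$ space so vector-valued Mikhlin applies), and that shears $M_\om$ act isometrically on these mixed norms up to the Jacobian, which is $1$. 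Combining the three steps yields $D_{q,r}(\delta, \Xi) \lsm \wt D_{q,r}(\delta)$ with an implied constant depending only on $d$ (and, through the multiplier bounds, on $q, r$), and taking the supremum over $\Xi$ gives $D_{q,r}(\delta) \lsm \wt D_{q,r}(\delta)$ as claimed.
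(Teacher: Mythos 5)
Your first two steps match the paper's strategy: split $\Xi$ into $O(1)$ many $c\delta$-separated pieces and exploit that $\chi_{\ta_{\om}} = 1$ on $\ta_{\om}$ together with the disjointness of the $\{2\ta_{\om}\}$. But your third step introduces a genuine gap. You reduce matters to showing that the Fourier multiplier with symbol $\sum_{\om}1_{\ta_{\om}}$ is bounded on $L^{q}_{t}L^{r}_{x}$ with a constant independent of $\delta$ and $\Xi$, and you justify the summation over $\om$ by saying that since the $\ta_{\om}$ are disjoint ``one can sum without loss.'' That is not a valid inference: operator norms of Fourier projections onto disjoint sets do not combine without loss on $L^{p}$ for $p \neq 2$ (this is precisely the difficulty behind Rubio de Francia-type theorems, and the multiplier norm of the indicator of a union of $N$ arbitrary disjoint intervals genuinely grows with $N$ in general). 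Moreover, your proposed escape hatch for the endpoints --- that $q, r \in \{1, \infty\}$ are ``excluded since \eqref{qrregion} keeps us in the open range'' --- is false: \eqref{qrregion} explicitly includes $q = \infty$, and the paper needs Proposition \ref{smoothrough} there.

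The irony is that you do not need any multiplier bound at all: you are one observation away from the paper's proof. Since $\wh{f}$ is supported in $\bigcup_{\om'}\ta_{\om'}$ and, after your reduction, $2\ta_{\om}$ meets no $\ta_{\om'}$ with $\om' \neq \om$, one has $\wh{f}\chi_{\ta_{\om}} = \wh{f}1_{\ta_{\om}}\chi_{\ta_{\om}} = \wh{f}1_{\ta_{\om}}$, i.e.\ $\wt{P}_{\ta_{\om}}f = P_{\ta_{\om}}f$ \emph{exactly} for such $f$. Hence with your $g$ (which in fact equals $f$) one has $\sum_{\om}P_{\ta_{\om}}f = \sum_{\om}\wt{P}_{\ta_{\om}}g$ and $\nms{\wt{P}_{\ta_{\om}}g}_{L^{q}_{t}L^{r}_{x}} = \nms{P_{\ta_{\om}}f}_{L^{q}_{t}L^{r}_{x}}$, so applying \eqref{smoothdef} to $g$ finishes the proof with no operator-norm input. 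This is exactly what the paper does (it applies the smooth decoupling to $g = \sum_{\om' \in \Xi_{i}}P_{\ta_{\om'}}f$ and uses $\wt{P}_{\ta_{\om}}g = P_{\ta_{\om}}f$), and it works for all $1 \leq q, r \leq \infty$.
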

\begin{proof}
It suffices to show that for each $\delta$-separated set $\Xi$, we have
that $D_{q, r}(\delta, \Xi) \lsm \wt{D}_{q, r}(\delta)$.
Partition $\Xi$ into $O(1)$ many subsets $\Xi_{i}$, each of which
is $1000c\delta$-separated.
For each $\om \in \Xi_{i}$, our choice of $c$ at the beginning of Section \ref{red} shows that $\ta_{\om}$ intersects one and only one element
of $\{2\ta_{\om'}: \om' \in \Xi_i\}$ (namely $2\ta_{\om}$) 
since the $\Xi_i$ are $1000c\delta$-separated. Thus for $\om, \om' \in \Xi_i$,
\begin{align*}
\wh{f}\chi_{\ta_{\om}}1_{\ta_{\om'}} = \begin{cases}
\wh{f}1_{\ta_{\om}} & \text{ if } \om = \om'\\
0 & \text{ else.}
\end{cases}
\end{align*}
This implies that for each $\om \in \Xi_i$ we have
\begin{align*}
\wt{P}_{\ta_{\om}}(\sum_{\om' \in \Xi_i}P_{\ta_{\om'}}f) = P_{\ta_{\om}}f.
\end{align*}
Therefore
\begin{align*}
\nms{\sum_{\om \in \Xi}P_{\ta_{\om}}f}_{L^{q}_{t}L^{r}_{x}(\R^d \times \R)} &\lsm \max_{i}\nms{\sum_{\om \in \Xi_i}P_{\ta_{\om}}f}_{L^{q}_{t}L^{r}_{x}(\R^d \times \R)}\\
&= \max_{i}\nms{\sum_{\om \in \Xi_i}\wt{P}_{\om}(\sum_{\om' \in \Xi_i}P_{\ta_{\om'}}f)}_{L^{q}_{t}L^{r}_{x}(\R^d \times \R)}\\
&\leq \max_{i}\wt{D}_{q, r}(\delta, \Xi_i)(\sum_{\om \in \Xi_i}\nms{\wt{P}_{\om}(\sum_{\om' \in \Xi_i}P_{\ta_{\om'}}f)}_{L^{q}_{t}L^{r}_{x}(\R^d \times \R)}^{2})^{1/2}\\
&= \max_{i}\wt{D}_{q, r}(\delta, \Xi_i)(\sum_{\om \in \Xi_i}\nms{P_{\ta_{\om}}f}_{L^{q}_{t}L^{r}_{x}(\R^d \times \R)}^{2})^{1/2}\\
&\leq \wt{D}_{q, r}(\delta)(\sum_{\om \in \Xi}\nms{P_{\ta_{\om}}f}_{L^{q}_{t}L^{r}_{x}(\R^d \times \R)}^{2})^{1/2}
\end{align*}
which completes the proof of the proposition.
\end{proof}

\begin{prop}[Rough decoupling implies smooth decoupling]\label{roughsmooth}
We have
$$\wt{D}_{q, r}(\delta) \leq D_{q, r}(2\delta).$$
\end{prop}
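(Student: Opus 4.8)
The plan is to show that a smooth decoupling for scale $\delta$ is controlled by a rough decoupling at scale $2\delta$. The key observation is that $\chi_{\ta_{\om}}$ is supported in $2\ta_{\om, \delta}$, and $2\ta_{\om, \delta}$ is comparable to — in fact contained in — the parallelepiped $\ta_{\om, 2\delta}$ (up to recentering / re-indexing), which is a cap at scale $2\delta$. So a function $g$ Fourier-supported in $\bigcup_{\om \in \Xi'}2\ta_{\om}$ is essentially a function adapted to a collection of scale-$2\delta$ caps, and we want to run the rough decoupling inequality \eqref{sharpdecoupling} at scale $2\delta$ on it.

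Concretely, I would first fix a $c\delta$-separated set $\Xi'$ and a $g$ with $\wh{g}$ supported in $\bigcup_{\om \in \Xi'}2\ta_{\om}$. The goal is to bound $\wt{D}_{q, r}(\delta, \Xi')$ by $D_{q, r}(2\delta)$. The natural move: pick for each $\om \in \Xi'$ a point $\om^{*}$ in the $2\delta$-grid such that $2\ta_{\om, \delta} \subset \ta_{\om^{*}, 2\delta}$; one should check that the resulting set $\Xi^{*} = \{\om^{*}\}$ is $2\delta$-separated (this uses that $\Xi'$ is $c\delta$-separated with $c = 100\sqrt{d}$, so distinct $\om$'s land in distinct, well-separated $2\delta$-caps), and that the assignment $\om \mapsto \om^{*}$ is injective. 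Since $\wt{P}_{\ta_{\om}}g$ has Fourier support in $2\ta_{\om} \subset \ta_{\om^{*}, 2\delta}$, we have $P_{\ta_{\om^{*}}, 2\delta}(\wt{P}_{\ta_{\om}}g) = \wt{P}_{\ta_{\om}}g$, and moreover $P_{\ta_{\om^{*}}, 2\delta}g = \wt{P}_{\ta_{\om}}g$ provided the $\{\ta_{\om^{*}, 2\delta}\}$ are pairwise disjoint, which follows from $2\delta$-separation and the geometry of the caps (again mirroring the argument in Proposition \ref{smoothrough}).

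Given all this, applying the definition of $D_{q, r}(2\delta, \Xi^{*})$ to $g$ directly yields
\begin{align*}
\nms{\sum_{\om \in \Xi'}\wt{P}_{\ta_{\om}}g}_{L^{q}_{t}L^{r}_{x}} = \nms{\sum_{\om^{*} \in \Xi^{*}}P_{\ta_{\om^{*}}, 2\delta}g}_{L^{q}_{t}L^{r}_{x}} \leq D_{q, r}(2\delta, \Xi^{*})\Big(\sum_{\om^{*} \in \Xi^{*}}\nms{P_{\ta_{\om^{*}}, 2\delta}g}_{L^{q}_{t}L^{r}_{x}}^{2}\Big)^{1/2} = D_{q, r}(2\delta, \Xi^{*})\Big(\sum_{\om \in \Xi'}\nms{\wt{P}_{\ta_{\om}}g}_{L^{q}_{t}L^{r}_{x}}^{2}\Big)^{1/2},
\end{align*}
and since $D_{q, r}(2\delta, \Xi^{*}) \leq D_{q, r}(2\delta)$ by the definition \eqref{sharpdecoupling}, taking the supremum over $\Xi'$ gives $\wt{D}_{q, r}(\delta) \leq D_{q, r}(2\delta)$.

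The main obstacle — really the only non-routine point — is verifying the containment $2\ta_{\om, \delta} \subset \ta_{\om^{*}, 2\delta}$ for a well-chosen $\om^{*}$ together with the separation and injectivity of $\Xi^{*}$. This is where the affine structure of the caps (recorded in \eqref{taqtad} via the matrices $M_{\om}$) and the precise dimensions ($\delta \times \cdots \times \delta \times \delta^{2}$, with the extra $2d\delta^{2}$ slack built into the definition of $\ta_{\om}$) have to be used carefully: dilating a cap by $2$ in frequency roughly doubles the base side length and quadruples the height, and one needs the $2d\delta^{2}$ fattening in $\ta_{\om^{*}, 2\delta}$ to absorb the resulting parallelogram. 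Choosing $\om^{*}$ to be the point of the standard $2\delta$-grid nearest to $\om$ (or the unique grid cell containing $B(\om, \delta)$) and then checking the defining inequalities of $\ta_{\om^{*}, 2\delta}$ on $2\ta_{\om, \delta}$ is the crux; everything else is bookkeeping analogous to Proposition \ref{smoothrough}.
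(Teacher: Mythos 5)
Your overall strategy is the paper's: reinterpret the smooth projections at scale $\delta$ as pieces of a rough decoupling at scale $2\delta$, using that $\supp\chi_{\ta_{\om,\delta}} = 2\ta_{\om,\delta}$ sits inside a scale-$2\delta$ cap and that the $c\delta$-separation of $\Xi'$ forces the orthogonality $1_{\ta_{\om,2\delta}}\chi_{\ta_{\om',\delta}} = 0$ for $\om\neq\om'$. However, the step you single out as the crux --- choosing $\om^{*}$ on a $2\delta$-grid near $\om$ and verifying $2\ta_{\om,\delta}\subset\ta_{\om^{*},2\delta}$ --- fails as stated whenever $\om^{*}\neq\om$. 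Already the base constraints break: if $|\om_i-\om^{*}_i|>0$ then a point with $|\xi_i-\om_i|=2\delta$ can have $|\xi_i-\om^{*}_i|>2\delta$, so it escapes $\ta_{\om^{*},2\delta}$; and the height constraint is measured against the tangent plane at $\om^{*}$ rather than at $\om$, which introduces a further tilt error. So the snapping-to-a-grid device does not produce the containment you need.

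The repair is to drop the grid entirely: the definition \eqref{sharpdecoupling} takes a supremum over \emph{all} $2\delta$-separated subsets of $[-1,1]^d$, and $\Xi'$ itself is $2\delta$-separated since $c\delta = 100\sqrt{d}\,\delta \geq 2\delta$. Taking $\om^{*}=\om$, the containment $2\ta_{\om,\delta}\subset\ta_{\om,2\delta}$ is immediate from the definitions ($|\xi_i-\om_i|\leq 2\delta$ matches, and $4d\delta^2 \leq 2d(2\delta)^2$), which is exactly what the paper does. One more caution: you attribute the disjointness $\ta_{\om,2\delta}\cap 2\ta_{\om',\delta}=\emptyset$ for $\om\neq\om'$ to ``$2\delta$-separation,'' but $2\delta$-separation alone is not enough --- intersection only forces $|\om_i-\om'_i|\leq 4\delta$ in each coordinate, so you genuinely need the $c\delta$-separation of $\Xi'$ with $c=100\sqrt{d}$ (this is \eqref{intchi} in the paper). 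With these two corrections your argument coincides with the paper's proof.
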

\begin{proof}
Given a $2\delta$-separated set $\Xi$, $D_{q, r}(2\delta, \Xi)$ is the best
constant such that
\begin{align}\label{rseq1}
\nms{\sum_{\om \in \Xi}P_{\ta_{\om, 2\delta}}f}_{L^{q}_{t}L^{r}_{x}(\R^d \times \R)} \leq D_{q, r}(2\delta, \Xi)(\sum_{\om \in \Xi}\nms{P_{\ta_{\om, 2\delta}}f}_{L^{q}_{t}L^{r}_{x}(\R^d \times \R)}^{2})^{1/2}
\end{align}
for all $f$ with Fourier transform supported in $\bigcup_{\om \in \Xi}\ta_{\om, 2\delta}$.

Now let $\Xi'$ be an arbitrary $c\delta$-separated subset of $[-1, 1]^d$.
Note that for $\om, \om' \in \Xi'$,
\begin{align}\label{intchi}
1_{\ta_{\om, 2\delta}}\chi_{\ta_{\om', \delta}} = 
\begin{cases}
\chi_{\ta_{\om, \delta}} & \text{ if } \om = \om'\\
0 & \text{ else.}
\end{cases}
\end{align}
Indeed, if $\om = \om'$, then this follows from that $\supp(\chi_{\ta_{\om, \delta}}) = 2\ta_{\om, \delta} \subset \ta_{\om, 2\delta}$. On the other hand if $\om \neq \om'$, then
since $\Xi'$ is $c\delta$-separated, $d(\om, \om') \geq c\delta$.
This implies that
$1_{\ta_{\om, 2\delta}}\chi_{\ta_{\om', \delta}} = 1_{\ta_{\om, 2\delta}\cap 2\ta_{\om', \delta}}\chi_{\ta_{\om', \delta}}$.
But $\ta_{\om, 2\delta} \cap 2\ta_{\om', \delta} = \emptyset$
since any $(\xi, \eta) \in \ta_{\om, 2\delta} \cap 2\ta_{\om', \delta}$
must satisfy $|\xi_i - \om_i|, |\xi_i - \om_{i}'| \leq 2\delta$
and hence $|\om_i - \om_{i}'| \leq 4\delta$ which violates the
fact that $d(\om, \om') \geq c\delta = 100\sqrt{d}\delta$ by our
choice of $c$ at the beginning of Section \ref{red}.
Equation \eqref{intchi} then implies that for each $\om \in \Xi'$,
\begin{align*}
P_{\ta_{\om, 2\delta}}(\sum_{\om' \in \Xi'}\wt{P}_{\ta_{\om', \delta}}g) = \wt{P}_{\ta_{\om, \delta}}g.
\end{align*}

Since $\Xi'$ is also a $2\delta$-separated set, 
for all $g$ with Fourier transform supported in $\bigcup_{\om \in \Xi'}2\ta_{\om, \delta} \subset \bigcup_{\om \in \Xi'}\ta_{\om, 2\delta}$, we have
\begin{align*}
\nms{\sum_{\om \in \Xi'}\wt{P}_{\ta_{\om, \delta}}g}_{L^{q}_{t}L^{r}_{x}(\R^d \times \R)} &= \nms{\sum_{\om \in \Xi'}P_{\ta_{\om, 2\delta}}(\sum_{\om' \in \Xi'}\wt{P}_{\ta_{\om', \delta}}g)}_{L^{q}_{t}L^{r}_{x}(\R^d \times \R)}\\
&\leq D_{q, r}(2\delta, \Xi')(\sum_{\om \in \Xi'}\nms{P_{\ta_{\om, 2\delta}}(\sum_{\om' \in \Xi'}\wt{P}_{\ta_{\om', \delta}}g)}_{L^{q}_{t}L^{r}_{x}(\R^d \times \R)}^{2})^{1/2}\\
&= D_{q, r}(2\delta, \Xi')(\sum_{\om \in \Xi'}\nms{\wt{P}_{\ta_{\om, \delta}}g}_{L^{q}_{t}L^{r}_{x}(\R^d \times \R)}^{2})^{1/2}
\end{align*}
where in both equalities we have used \eqref{intchi}.
Since the above calculation is true for all $g$
with Fourier transform supported in $\bigcup_{\om \in \Xi'}2\ta_{\om, \delta}$, by \eqref{smoothdef}, it follows
that $\wt{D}_{q, r}(\delta, \Xi') \leq D_{q, r}(2\delta, \Xi') \leq D_{q, r}(2\delta).$
Finally taking the supremum over all $c\delta$-separated sets $\Xi'$ then completes the proof.
\end{proof}

We will use Propositions \ref{smoothrough} and \ref{roughsmooth} in the following ways:
\begin{itemize}
\item\label{1stbullet} By Proposition \ref{smoothrough}, to prove that \eqref{qrregion} is sufficient for \eqref{mainbd}, it suffices to show that if $(q, r)$ satisfy \eqref{qrregion}, then $\wt{D}_{q, r}(\delta) \lsm_{q, r, \vep} \delta^{-\vep}$.
\item Bourgain and Demeter's paraboloid decoupling theorem with Proposition \ref{roughsmooth} implies that
$\wt{D}_{p, p}(\delta) \lsm_{p, \vep} \delta^{-\vep}$ for all $2 \leq p \leq \frac{2(d + 2)}{d}$.
\end{itemize}
With this reduction to smoothed decoupling, we are now
ready to begin proving some basic properties
surrounding the smoothed decoupling constant.

\subsection{The $L^{\infty}_{t}L^{2}_{x}$ estimate}
We first prove the bottom right corner of the shaded region in Figure \ref{mixedfig}. More precisely, we will show that $\wt{D}_{\infty, 2}(\delta) \leq 1$.
This estimate follows from Plancherel. Fix an arbitrary $c \delta$-separated subset $\Xi'$ of $[-1, 1]^{d}$.
For each fixed time $t$, we have
\begin{align*}
\nms{\sum_{\om \in \Xi'}\wt{P}_{\ta_{\om}}f}_{L^{2}_{x}(\R^d)} = \nms{\sum_{\om \in \Xi'}\mc{F}_{x}(\wt{P}_{\ta_{\om}}f)}_{L^{2}_{\xi}(\R^d)}
\end{align*}
where $\mc{F}_{x}(g)(\xi, t) = \int_{\R^d}g(x, t)e^{-2\pi i x \cdot \xi}\, dx$ is the Fourier transform in space only. 
Since $\wt{P}_{\ta_{\om}}f$ has Fourier transform supported in $2\ta_{\om}$, 
$\mc{F}_{x}(\wt{P}_{\ta_{\om}}f)$ is supported in a cube of side length $2\delta$
centered at $\om$. Since $\Xi'$ is a $c\delta$-separated set, the supports
of $\mc{F}_{x}(\wt{P}_{\ta_{\om}}f)$ are disjoint.
Thus the above is
\begin{align*}
= (\sum_{\om \in \Xi'}\nms{\mc{F}_{x}(\wt{P}_{\ta_{\om}}f)}_{L^{2}_{\xi}(\R^d)}^{2})^{1/2} = (\sum_{\om \in \Xi}\nms{\wt{P}_{\ta_{\om}}f}_{L^{2}_{x}(\R^d)}^{2})^{1/2}.
\end{align*}
Taking a supremum over all time $t$ and then a supremum over $\Xi$ then shows that $\wt{D}_{\infty, 2}(\delta) \leq 1$. 

\subsection{Interpolation}
We would now like to interpolate the $L^{\infty}_{t}L^{2}_{x}$ estimate
with the estimate for $L^{2}_{t}L^{2}_{x}$ and $L^{\frac{2(d + 2)}{d}}_{t}L^{\frac{2(d + 2)}{d}}_{x}$
to obtain that
$\wt{D}_{q, r}(\delta) \lsm_{q, r, \vep} \delta^{-\vep}$ for any $(q, r)$ such that $(\frac{1}{r}, \frac{1}{q})$
is contained in the triangle bounded by the Strichartz line $\frac{2}{q} + \frac{d}{r} = \frac{d}{2}$ and the lines $\frac{1}{r} = \frac{1}{2}$ and $\frac{1}{r} = \frac{1}{q}$.
We will first need that the smooth Fourier projection onto
$\ta_{\om}$ is a bounded operator in the mixed norm $L^{q}_{t}L^{r}_{x}(\R^{d} \times \R)$.
\begin{lemma}\label{bdd}
For any $\om$,
\begin{align*}
\nms{\wt{P}_{\ta_\om}f}_{L^{q}_{t}L^{r}_{x}(\R^{d} \times \R)} \lsm \nms{f}_{L^{q}_{t}L^{r}_{x}(\R^{d} \times \R)}.
\end{align*}
\end{lemma}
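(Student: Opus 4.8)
The plan is to realize $\wt{P}_{\ta_{\om}}$ as convolution against a Schwartz kernel whose $L^1$ norm is bounded \emph{uniformly in both $\om$ and $\delta$}, and then to conclude via the mixed-norm form of Young's inequality. Write $\wt{P}_{\ta_{\om}}f = f * K_{\om}$ where $K_{\om} := \wc{\chi}_{\ta_{\om}} = \wc{(\chi \circ A_{\om})}$. Since $\chi$ is Schwartz and $\chi_{\ta_{\om}}$ is its composition with the invertible affine map $A_{\om}$, the function $\chi_{\ta_{\om}}$ is again Schwartz, so $K_{\om}$ is Schwartz and in particular lies in $L^1(\R^{d+1})$; thus the convolution makes sense.

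The key point is that $\nms{K_{\om}}_{L^1(\R^{d+1})} = \nms{\wc{\chi}}_{L^1(\R^{d+1})}$, a constant depending only on $\chi$ (hence only on $d$). To see this, recall from \eqref{taqtad} that $\ta_{\om} = (\om, |\om|^2) + M_{\om}\ta_{\delta}$, so we may take $A_{\om}(\xi, \eta) = S_{\delta}M_{\om}^{-1}((\xi, \eta) - (\om, |\om|^2))$, where $S_{\delta} = \operatorname{diag}(\delta^{-1}, \ldots, \delta^{-1}, (2d\delta^2)^{-1})$ is the diagonal matrix with $S_{\delta}\ta_{\delta} = [-1, 1]^{d+1}$. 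Performing the change of variables $(\xi, \eta) = (\om, |\om|^2) + M_{\om}S_{\delta}^{-1}(u, v)$ in the inverse Fourier integral defining $K_{\om}$ and using $\det M_{\om} = 1$ gives
\begin{align*}
K_{\om}(x, t) = e(x \cdot \om + |\om|^2 t)\,(\det S_{\delta})^{-1}\,\wc{\chi}\big(S_{\delta}^{-1}M_{\om}^{T}(x, t)\big).
\end{align*}
Taking absolute values and then changing variables $(y, s) = S_{\delta}^{-1}M_{\om}^{T}(x, t)$ (whose Jacobian is exactly $(\det S_{\delta})^{-1}$, again since $\det M_{\om}^{T} = 1$) makes the two determinant factors cancel, leaving $\nms{K_{\om}}_{L^1} = \nms{\wc{\chi}}_{L^1} \lsm 1$. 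The mechanism here is just that the $L^1$ mass of the kernel is invariant under the affine rescaling and modulation relating $\ta_{\om}$ to the unit cube.

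Finally I would apply Young's inequality adapted to the mixed norm: for $g \in L^1(\R^{d+1})$ one has $\nms{f * g}_{L^{q}_{t}L^{r}_{x}} \leq \nms{g}_{L^1(\R^{d+1})}\nms{f}_{L^{q}_{t}L^{r}_{x}}$. This follows by viewing $f * g$ as a superposition of translates of $f$: Minkowski's integral inequality in the $x$-variable bounds $\nms{(f*g)(\cdot, t)}_{L^{r}_{x}}$ by $(G * F)(t)$, where $F(t) := \nms{f(\cdot, t)}_{L^{r}_{x}}$ and $G(s) := \int_{\R^d}|g(y, s)|\, dy$, and then the one-dimensional Young inequality in $t$ gives $\nms{G * F}_{L^{q}_{t}} \leq \nms{G}_{L^1}\nms{F}_{L^{q}_{t}} = \nms{g}_{L^1(\R^{d+1})}\nms{f}_{L^{q}_{t}L^{r}_{x}}$. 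Combining this with the kernel bound from the previous paragraph finishes the proof. There is no genuine obstacle here; the only thing that requires care is verifying that the Jacobian factors cancel, so that the kernel bound is truly uniform in $\delta$ (and $\om$) — which is precisely what makes this a useful input for the decoupling interpolation that follows.
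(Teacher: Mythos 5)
Your proposal is correct and is essentially the paper's own argument: the paper likewise writes $\wt{P}_{\ta_\om}f = f \ast \wc{\chi}_{\ta_\om}$, applies the mixed norm Young's inequality (which it cites from Benedek--Panzone rather than rederiving via Minkowski as you do), and notes that a rescaling argument gives $\nms{\wc{\chi}_{\ta_\om}}_{L^1} \lsm 1$ uniformly. You have simply written out the change-of-variables and the Jacobian cancellation that the paper leaves as ``a simple rescaling argument,'' and both computations are correct.
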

\begin{proof}
In this proof we will make use of a mixed norm Young's inequality, see for example \cite[p. 319]{BP}.
Recall that by definition, we have $\wt{P}_{\ta_{\om}}f = f \ast \wc{\chi}_{\ta}$ and hence
Young's inequality gives
\begin{align*}
\nms{\wt{P}_{\ta_{\om}}f}_{L^{q}_{t}L^{r}_{x}(\R^d \times \R)} \leq \nms{f}_{L^{q}_{t}L^{r}_{x}(\R^d \times \R)} \nms{\wc{\chi}_{\ta}}_{L^{1}_{t}L^{1}_{x}(\R^d \times \R)}. 
\end{align*}
A simple rescaling argument shows that $\wc{\chi}_{\ta}$ has $L^{1}$ norm $\lsm 1$ which completes the proof of the lemma.
\end{proof}
Now we state the interpolation result we will make use of.
\begin{prop}\label{interpolate}
Suppose for some $\alpha \in [0, 1]$ we have that
\begin{align*}
\frac{1}{q} = \frac{\alpha}{q_1} + \frac{1 - \alpha}{q_2}\quad \text{ and } \quad \frac{1}{r} = \frac{\alpha}{r_1} + \frac{1 - \alpha}{r_2}
\end{align*}
with $(q_i, r_i), (q, r) \in [1, \infty]^2$. Then
\begin{align*}
\wt{D}_{q, r}(\delta) \lsm_{q_1, q_2, r_1, r_2}\wt{D}_{q_1, r_1}(\delta)^{\alpha}\wt{D}_{q_2, r_2}(\delta)^{1 - \alpha}.
\end{align*}
\end{prop}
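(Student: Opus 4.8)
The plan is to derive Proposition \ref{interpolate} from vector-valued complex interpolation (Riesz--Thorin) applied to a single linear operator that is manifestly independent of $q$ and $r$; the only non-formal ingredient will be the boundedness statement of Lemma \ref{bdd}.

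Fix a $c\delta$-separated set $\Xi'$. Choose Schwartz functions $\eta_{\om}$ that are $1$ on $2\ta_{\om}$ and supported in $4\ta_{\om}$ (these are pairwise disjoint by our choice of $c$), put $\wt{\wt{P}}_{\ta_{\om}}h:=(\wh{h}\,\eta_{\om})^{\vee}$, and note that, just as in Lemma \ref{bdd}, each $\wt{\wt{P}}_{\ta_{\om}}$ is bounded on every $L^{q}_{t}L^{r}_{x}(\R^{d}\times\R)$ with norm $\lsm 1$. Consider
\[
S_{\Xi'}\big((h_{\om})_{\om\in\Xi'}\big):=\sum_{\om\in\Xi'}\wt{\wt{P}}_{\ta_{\om}}h_{\om},\qquad S_{\Xi'}\colon\ell^{2}_{\om}\big(L^{q}_{t}L^{r}_{x}\big)\longrightarrow L^{q}_{t}L^{r}_{x}.
\]
Because $S_{\Xi'}$ is built only from fixed Fourier multipliers and summation, it is literally the same operator for all $(q,r)$; write $\nms{S_{\Xi'}}_{q,r}$ for its norm between these spaces. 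I would then record two comparisons. First, $\wt{D}_{q,r}(\delta,\Xi')\lsm\nms{S_{\Xi'}}_{q,r}$: given $g$ with $\wh{g}$ supported in $\bigcup_{\om}2\ta_{\om}$, feed in $h_{\om}:=\wt{P}_{\ta_{\om}}g$; since $\eta_{\om}\equiv1$ on $\supp\chi_{\ta_{\om}}$ this yields $S_{\Xi'}((h_{\om}))=\sum_{\om}\wt{P}_{\ta_{\om}}g$, while $\nms{(h_{\om})}_{\ell^{2}(L^{q}_{t}L^{r}_{x})}=(\sum_{\om}\nms{\wt{P}_{\ta_{\om}}g}^{2}_{L^{q}_{t}L^{r}_{x}})^{1/2}$, which is exactly the right-hand side of \eqref{smoothdef}. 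Second, $\nms{S_{\Xi'}}_{q,r}\lsm\wt{D}_{q,r}(\delta)$: for any tuple $(h_{\om})$, the functions $f_{\om}:=\wt{\wt{P}}_{\ta_{\om}}h_{\om}$ have Fourier support in $4\ta_{\om}\subset\ta_{\om,4\delta}$ with the $\ta_{\om,4\delta}$ pairwise disjoint, so rough decoupling at scale $4\delta$ together with Proposition \ref{smoothrough} and $\nms{f_{\om}}\lsm\nms{h_{\om}}$ gives $\nms{S_{\Xi'}}_{q,r}\lsm\wt{D}_{q,r}(4\delta)$, which a routine rescaling (see the last paragraph) converts to $\nms{S_{\Xi'}}_{q,r}\lsm\wt{D}_{q,r}(\delta)$.

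The heart of the matter is then the complex interpolation identity
\[
\big[\,\ell^{2}_{\om}\big(L^{q_{1}}_{t}L^{r_{1}}_{x}\big),\ \ell^{2}_{\om}\big(L^{q_{2}}_{t}L^{r_{2}}_{x}\big)\,\big]_{\theta}=\ell^{2}_{\om}\big(L^{q}_{t}L^{r}_{x}\big),\qquad\theta=1-\alpha,
\]
which follows from $[\ell^{2}(A_{1}),\ell^{2}(A_{2})]_{\theta}=\ell^{2}([A_{1},A_{2}]_{\theta})$ and the standard complex interpolation of iterated Lebesgue spaces. Applying Riesz--Thorin to the exponent-independent $S_{\Xi'}$ gives $\nms{S_{\Xi'}}_{q,r}\le\nms{S_{\Xi'}}_{q_{1},r_{1}}^{\alpha}\nms{S_{\Xi'}}_{q_{2},r_{2}}^{1-\alpha}$; combining this with the two comparisons and taking the supremum over all $c\delta$-separated $\Xi'$ produces the asserted bound. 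Alternatively, one can run Stein interpolation on the dual inequality $(\sum_{\om}\nms{\wt{P}_{\ta_{\om}}h}^{2}_{L^{q'}_{t}L^{r'}_{x}})^{1/2}\lsm\wt{D}_{q,r}(\delta)\nms{h}_{L^{q'}_{t}L^{r'}_{x}}$, whose input $h$ carries no Fourier constraint, so the usual analytic modification of $h$ causes no trouble.

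The real obstacle, I expect, is not the interpolation — which is soft once the operator has been isolated — but the two comparisons, specifically keeping the Fourier-support constraints compatible with interpolation and returning from the slightly dilated caps $\ta_{\om,4\delta}$ to genuine scale-$\delta$ caps. The smooth projections are not idempotent and the cutoffs $\eta_{\om}$ have supports a fixed constant multiple larger than $\ta_{\om}$, so a short parabolic-rescaling argument (or simply absorbing the resulting $\delta^{-O(\vep)}$, which is harmless in the intended application) is needed to pass from scale $4\delta$ back to scale $\delta$. A secondary point requiring care is the endpoint $q=\infty$, which genuinely occurs since one of the input estimates is $\wt{D}_{\infty,2}(\delta)\le1$: this is precisely why one should phrase everything through the smooth projections $\wt{P},\wt{\wt{P}}$, whose $L^{q}_{t}L^{r}_{x}\to L^{q}_{t}L^{r}_{x}$ boundedness in Lemma \ref{bdd} is uniform up to and including the endpoints, rather than through rough Fourier restrictions, whose multipliers fail to be bounded there.
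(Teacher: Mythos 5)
Your proposal is essentially the paper's proof: both treat $(h_\om)_{\om \in \Xi'} \mapsto \sum_{\om}(\text{smooth projection of } h_\om)$ as a single exponent-independent linear operator, bound it from $\ell^2(L^{q_i}_tL^{r_i}_x)$ to $L^{q_i}_tL^{r_i}_x$ by $\wt{D}_{q_i,r_i}$ with Lemma \ref{bdd} as the only non-formal input, and conclude by mixed-norm vector-valued Riesz--Thorin. The one divergence is your handling of the non-idempotency of $\wt{P}_{\ta_\om}$: enlarging the multiplier to $\eta_\om \equiv 1$ on $2\ta_\om$ makes the comparison $\wt{D}_{q,r}(\delta,\Xi') \lsm \nms{S_{\Xi'}}_{q,r}$ exact, but pushes the Fourier support of $S_{\Xi'}((h_\om))$ out to $\bigcup_\om 4\ta_\om$, so your endpoint bounds only reach $\wt{D}_{q_i,r_i}(4\delta)$. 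The paper avoids this by taking $T(\{f_j\}) = \sum_j \wt{P}_{\ta_j}f_j$, whose Fourier support lies in $\bigcup_j 2\ta_j$; the definition of $\wt{D}_{q_i,r_i}(\delta,\Xi')$ then applies at scale $\delta$ directly, and the resulting pieces $\wt{P}_{\ta_\om}T(\{f_j\}) = \wt{P}_{\ta_\om}\wt{P}_{\ta_\om}f_\om$ are controlled by two applications of Lemma \ref{bdd}, with no change of scale. One caveat on your remaining step: passing from $\wt{D}_{q_i,r_i}(4\delta)$ back to $\wt{D}_{q_i,r_i}(\delta)$ is not a parabolic rescaling (that rescales a single cap to the full paraboloid rather than comparing the two separation scales); what works is either regrouping the $O(1)$ many $\delta$-caps inside each $4\delta$-cap via Cauchy--Schwarz and Lemma \ref{bdd}, or, as you correctly observe, simply keeping the $4\delta$ on the right-hand side, which is harmless in every application of the proposition.
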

\begin{proof}
Let $\Xi'$ be a $c\delta$-separated subset of $[-1, 1]^d$.
Let $T$ be a linear operator defined by
\begin{align*}
T(\{f_{j}\}_{j \in \Xi'}) := \sum_{j \in \Xi'}\wt{P}_{\ta_{j}}f_{j}.
\end{align*}
Note that $T(\{f_j\})$ has Fourier transform supported in $\bigcup_{j \in \Xi'}2\ta_{j}$.
Then for each $i = 1, 2$, we have
\begin{align*}
\nms{T(\{f_j\})}_{L^{q}_{t}L^{r}_{x}(\R^d \times \R)} &\leq \wt{D}_{q_i, r_i}(\delta)(\sum_{\om \in \Xi'}\nms{\wt{P}_{\ta_{\om}}(\sum_{j \in \Xi'}\wt{P}_{\ta_j}f_j)}_{L^{q}_{t}L^{r}_{x}(\R^d \times \R)}^{2})^{1/2}\\
&\lsm \wt{D}_{q_i, r_i}(\delta)(\sum_{j \in \Xi'}\nms{f_{j}}_{L^{q}_{t}L^{r}_{x}(\R^{d} \times \R)}^{2})^{1/2}
\end{align*}
where in the last inequality we have used Lemma \ref{bdd} and 
that the choice of $c$ at the beginning of Section \ref{red} ensures that the $\{2 \ta_{j}\}_{j \in \Xi'}$ are disjoint.
Thus $T: l^{2}L^{q_i}_{t}L^{r_i}_{x}(\R^{d} \times \R \times \Sigma) \rightarrow L^{q_{i}}_{t}L^{r_i}_{x}(\R^d \times \R)$ for $i = 1, 2$. Applying a mixed norm Riesz-Thorin
interpolation theorem (see for example \cite[p. 316]{BP}), 
shows that $\wt{D}_{q, r}(\delta, \Xi') \lsm_{q_1, q_2, r_1, r_2} \wt{D}_{q_1, r_1}(\delta)^{\alpha}\wt{D}_{q_2, r_2}(\delta)^{1 - \alpha}$. Taking the supremum over all $\Xi'$ then completes the proof.
\end{proof}
This then gives the following corollary which already proves part of the sufficiency of \eqref{qrregion} for Theorem \ref{main}.
\begin{cor}\label{strichartzline}
If $q$ and $r$ are such that $(\frac{1}{r}, \frac{1}{q})$ lies in the closed triangle bounded by the Strichartz line $\frac{2}{q} + \frac{d}{r} = \frac{d}{2}$ and the lines $\frac{1}{r} = \frac{1}{2}$ and $\frac{1}{r} = \frac{1}{q}$, then
$\wt{D}_{q, r}(\delta) \lsm_{q, r, \vep} \delta^{-\vep}$. 
\end{cor}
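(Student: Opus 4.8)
The plan is to deduce this from Proposition \ref{interpolate} together with the three ``vertex'' estimates already at our disposal. In the $(\frac1r,\frac1q)$ plane the triangle in question has vertices $(\frac12,0)$, $(\frac12,\frac12)$ and $(\frac{d}{2(d+2)},\frac{d}{2(d+2)})$, corresponding respectively to the exponent pairs $(q,r)=(\infty,2)$, $(q,r)=(2,2)$ and $(q,r)=(p,p)$ with $p:=\frac{2(d+2)}{d}$. At the first vertex we proved $\wt{D}_{\infty,2}(\delta)\le 1$ in the previous subsection; at the second, combining the Plancherel bound $D_{2,2}(\delta)\lsm 1$ with Proposition \ref{roughsmooth} gives $\wt{D}_{2,2}(\delta)\le D_{2,2}(2\delta)\lsm 1$; and at the third, Bourgain--Demeter's sharp $(l^2,L^{p}_{x,t})$ decoupling together with Proposition \ref{roughsmooth} gives $\wt{D}_{p,p}(\delta)\lsm_{\vep}\delta^{-\vep}$. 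So the corollary is just a matter of propagating these three bounds to the whole triangle by two rounds of the two-point interpolation of Proposition \ref{interpolate}.

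First I would fill in the Strichartz edge joining $(\frac12,0)$ to $(\frac{d}{2(d+2)},\frac{d}{2(d+2)})$. For $\theta\in[0,1]$ set $(\frac1r,\frac1q)=(1-\theta)(\frac12,0)+\theta(\frac1p,\frac1p)$, so that $\frac1q=\frac\theta p$ and $\frac1r=\frac{1-\theta}{2}+\frac\theta p$; these identities say exactly that the pair $(q,r)$ satisfies the hypothesis of Proposition \ref{interpolate} with $\alpha=\theta$, $(q_1,r_1)=(p,p)$ and $(q_2,r_2)=(\infty,2)$. Hence Proposition \ref{interpolate} together with $\wt{D}_{p,p}(\delta)\lsm_{\vep}\delta^{-\vep}$ and $\wt{D}_{\infty,2}(\delta)\le 1$ gives $\wt{D}_{q,r}(\delta)\lsm_{q,r,\vep}\delta^{-\vep}$ for every $(q,r)$ with $(\frac1r,\frac1q)$ on this edge.

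Second, I would interpolate the edge just obtained against the vertex $V_1:=(\frac12,\frac12)$, i.e. $(q,r)=(2,2)$. The triangle is convex and $V_1$ lies on two of its three edges, so for any point $P=(\frac1r,\frac1q)$ in the closed triangle the ray from $V_1$ through $P$ meets the Strichartz edge at a unique point $Q=(\frac1{r_*},\frac1{q_*})$ (the case $P=V_1$ being trivial since $\wt{D}_{2,2}(\delta)\lsm 1$). Writing $P=\beta V_1+(1-\beta)Q$ for some $\beta\in[0,1]$, this is precisely the hypothesis of Proposition \ref{interpolate} with $\alpha=\beta$, $(q_1,r_1)=(2,2)$, $(q_2,r_2)=(q_*,r_*)$; combining $\wt{D}_{2,2}(\delta)\lsm 1$ with the edge bound from the previous step yields $\wt{D}_{q,r}(\delta)\lsm_{q,r,\vep}\delta^{-\vep}$, which is the claim.

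The argument is essentially bookkeeping, so there is no serious obstacle; the only points needing care are that the exponent identities in Proposition \ref{interpolate} really hold for the chosen interpolation points (immediate once one works in convex coordinates in the $(\frac1r,\frac1q)$ plane), that all interpolation endpoints lie in $[1,\infty]^2$ (true since every exponent involved is $\geq 2$), and that for a fixed target $(q,r)$ the auxiliary point $Q$ is fixed, so the $(q_1,q_2,r_1,r_2)$-dependent constant in Proposition \ref{interpolate} is harmless --- in particular the case $Q=(\frac12,0)$, where $q_*=\infty$, is allowed since Proposition \ref{interpolate} permits infinite exponents.
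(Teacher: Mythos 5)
Your proposal is correct and follows essentially the same route as the paper: interpolate the $\wt{D}_{\frac{2(d+2)}{d},\frac{2(d+2)}{d}}$ bound (via Proposition \ref{roughsmooth}) with $\wt{D}_{\infty,2}(\delta)\leq 1$ to cover the Strichartz edge, then interpolate each point of that edge with the $\wt{D}_{2,2}$ bound to fill the triangle. The extra bookkeeping you include (verifying the convexity identities, the admissibility of infinite exponents, and the harmlessness of the endpoint-dependent constants) is consistent with what the paper leaves implicit.
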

\begin{proof}
First, interpolate the $L^{\frac{2(d + 2)}{d}}_{t}L^{\frac{2(d + 2)}{d}}_{x}$ estimate with the $L^{\infty}_{t}L^{2}_{x}$ estimate 
to obtain that the estimate $\wt{D}_{q, r}(\delta) \lsm_{q, r, \vep} \delta^{-\vep}$ for all $(q, r)$ such that $\frac{2}{q} + \frac{d}{r} = \frac{d}{2}$.
Next, interpolate each estimate on this Strichartz line with the $L^{2}_{t}L^{2}_{x}$ estimate to obtain the whole region.
\end{proof}
\subsection{Reverse H\"{o}lder in time}
The rest of the proof of the sufficiency of Theorem \ref{main} is devoted to showing that
if we know $L^{q}_{t}L^{r}_{x}$ decoupling
is small, then so is $L^{\wt{q}}_{t}L^{r}_{x}$ for any $2 \leq \wt{q} < q < \infty$.

Fix a $c \delta$-separated
subset $\Xi'$ of $[-1, 1]^{d}$. For $1 \leq q, r \leq \infty$, let
$\ov{D}_{q, r}(\delta, \Xi')$ be the best constant such that
\begin{align*}
\nms{\sum_{\om \in \Xi'}\wt{P}_{\ta_{\om}}f}_{L^{q}_{t}L^{r}_{x}(\R^d \times [0, \delta^{-2}])} \leq \ov{D}_{q, r}(\delta, \Xi')(\sum_{\om \in \Xi'}\nms{\wt{P}_{\ta_{\om}}f}_{L^{q}_{t}L^{r}_{x}(\R^d \times \phi_{[0, \delta^{-2}]})}^{2})^{1/2}
\end{align*}
whenever $f$ has Fourier transform supported in $\bigcup_{ \om \in \Xi'} 2 \ta_{\om}$. Here we recall that $\phi$ is the particular Schwartz function defined in the notation section in the introduction.
Finally, let $$\ov{D}_{q, r}(\delta) := \sup_{\Xi'}\ov{D}_{q, r}(\delta, \Xi')$$
where the supremum is once again taken over all $c\delta$-separated subsets $\Xi'$ of $[-1, 1]^d$.
We will need the following two results which show that the time localized $\ov{D}_{q, r}(\delta)$
is essentially the same size as $\wt{D}_{q, r}(\delta)$.

\begin{prop}\label{equiv}
If $(q, r) \in [2, \infty] \times [1, \infty]$, then
$$\ov{D}_{q, r}(\delta) \lesssim \wt{D}_{q, r}(2 \delta)$$
and
$$\wt{D}_{q,r}(\delta) \lesssim_{q} \ov{D}_{q, r}(\delta).$$
\end{prop}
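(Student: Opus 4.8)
The plan is to prove the two inequalities separately. For the first inequality $\ov{D}_{q,r}(\delta) \lsm \wt{D}_{q,r}(2\delta)$, I would start with a $c\delta$-separated set $\Xi'$ and an $f$ with $\wh{f}$ supported in $\bigcup_{\om \in \Xi'} 2\ta_{\om}$, and observe that the left side of the definition of $\ov{D}_{q,r}(\delta, \Xi')$ only makes it \emph{smaller} to replace the sharp cutoff $1_{[0,\delta^{-2}]}$ by the Schwartz bump $\phi_{[0,\delta^{-2}]}$ (up to a constant, since $\phi \geq 1_{[-1,1]}$); so $\nms{\sum_\om \wt{P}_{\ta_\om}f}_{L^q_tL^r_x(\R^d\times[0,\delta^{-2}])} \lsm \nms{(\sum_\om \wt{P}_{\ta_\om}f)\phi_{[0,\delta^{-2}]}}_{L^q_tL^r_x}$. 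The key point is that multiplication by $\phi_{[0,\delta^{-2}]}$ on the physical side is convolution by $\wh{\phi_{[0,\delta^{-2}]}}$ on the Fourier side, which is supported in an interval of length $O(\delta^2)$ in the $\eta$-variable; hence $(\sum_\om \wt{P}_{\ta_\om}f)\phi_{[0,\delta^{-2}]}$ has Fourier transform supported in $\bigcup_\om (2\ta_{\om,\delta} + O(\delta^2)\mathbf{e}_{d+1}) \subset \bigcup_\om 2\ta_{\om, 2\delta}$, and similarly each piece $(\wt{P}_{\ta_\om}f)\phi_{[0,\delta^{-2}]}$ is Fourier supported in $2\ta_{\om,2\delta}$. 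One must also check that $\Xi'$, being $c\delta$-separated, is in particular $c(2\delta)/2$-separated — more precisely I should verify the separation hypothesis needed to apply $\wt{D}_{q,r}(2\delta, \Xi')$; since $c\delta$-separation is the same as $\tfrac{c}{2}(2\delta)$-separation this is fine once we note the definition of $\wt{D}$ allows any $c\delta'$-separated set at scale $\delta'$. Then apply the definition of $\wt{D}_{q,r}(2\delta)$ to the function $(\sum_\om \wt{P}_{\ta_\om}f)\phi_{[0,\delta^{-2}]}$, noting that the smooth projection $\wt{P}_{\ta_{\om,2\delta}}$ applied to it recovers $(\wt{P}_{\ta_\om}f)\phi_{[0,\delta^{-2}]}$ because $\chi_{\ta_{\om,2\delta}}$ equals $1$ on $2\ta_{\om,\delta}+O(\delta^2)\mathbf{e}_{d+1}$ and the $\{2\ta_{\om,2\delta}\}$ are disjoint (by the choice of $c$). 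This yields $\nms{(\sum_\om \wt{P}_{\ta_\om}f)\phi_{[0,\delta^{-2}]}}_{L^q_tL^r_x} \leq \wt{D}_{q,r}(2\delta)(\sum_\om \nms{(\wt{P}_{\ta_\om}f)\phi_{[0,\delta^{-2}]}}_{L^q_tL^r_x}^2)^{1/2}$, and the right side is exactly the square function appearing in the definition of $\ov{D}_{q,r}$, finishing the first bound.

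For the second inequality $\wt{D}_{q,r}(\delta) \lsm_q \ov{D}_{q,r}(\delta)$, I would exploit translation invariance in time together with the localization to windows of length $\delta^{-2}$. Given a $c\delta$-separated $\Xi'$ and $g$ with $\wh{g}$ supported in $\bigcup_\om 2\ta_\om$, tile $\R$ by intervals $I_j = [j\delta^{-2}, (j+1)\delta^{-2})$. On each $I_j$, after translating to $[0,\delta^{-2}]$ the function $g$ still has Fourier support in the same set (modulation does not move Fourier support), so the definition of $\ov{D}_{q,r}(\delta,\Xi')$ applies and gives $\nms{\sum_\om \wt{P}_{\ta_\om}g}_{L^q_tL^r_x(\R^d\times I_j)} \leq \ov{D}_{q,r}(\delta)(\sum_\om \nms{\wt{P}_{\ta_\om}g}_{L^q_tL^r_x(\R^d\times \phi_{I_j})}^2)^{1/2}$. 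Raising to the $q$-th power and summing over $j$ (using $q \geq 2$ so that $\ell^q \subset \ell^2$ going one way and $\ell^2 \subset \ell^q$ the other way — here I need to be careful about the direction: I want to sum the $q$-th powers on the left, which reassembles $\nms{\cdot}_{L^q_tL^r_x(\R^d\times\R)}^q$ exactly, and on the right I sum the $q$-th powers of the $\ell^2$-square-functions, then apply Minkowski's inequality in the form $\ell^2(\ell^q) \hookrightarrow \ell^q(\ell^2)$ valid since $q \geq 2$ to pull the $j$-sum inside the $\om$-sum). This converts $(\sum_j (\sum_\om \nms{\cdot}_{L^q_tL^r_x(\R^d\times\phi_{I_j})}^2)^{q/2})^{1/q} \leq (\sum_\om (\sum_j \nms{\cdot}_{L^q_tL^r_x(\R^d\times\phi_{I_j})}^q)^{2/q})^{1/2}$, and then $\sum_j \nms{h}_{L^q_tL^r_x(\R^d\times\phi_{I_j})}^q \lsm \nms{h}_{L^q_tL^r_x(\R^d\times\R)}^q$ because $\sum_j \phi_{I_j}(t)^q \lsm 1$ uniformly (the translates of a fixed Schwartz bump at unit-rescaled spacing have bounded overlap in any fixed power). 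Putting this together gives $\nms{\sum_\om \wt{P}_{\ta_\om}g}_{L^q_tL^r_x(\R^d\times\R)} \lsm_q \ov{D}_{q,r}(\delta)(\sum_\om \nms{\wt{P}_{\ta_\om}g}_{L^q_tL^r_x(\R^d\times\R)}^2)^{1/2}$, i.e. $\wt{D}_{q,r}(\delta,\Xi') \lsm_q \ov{D}_{q,r}(\delta)$; take the supremum over $\Xi'$.

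I expect the main obstacle to be the bookkeeping in the second inequality: correctly chaining the two nested-norm inequalities (the $\ell^q$ reassembly of the $L^q_t$ norm over the time-tiles, and the $\ell^2(\ell^q)\hookrightarrow\ell^q(\ell^2)$ Minkowski step which is precisely where the hypothesis $q \geq 2$ is used, explaining the $\lsm_q$ dependence), while keeping track that the Schwartz weights $\phi_{I_j}$ have controlled $q$-power overlap and that the sharp-cutoff-versus-Schwartz-weight comparison on each tile goes in the right direction. A secondary subtlety, shared by both parts, is making sure the definitions are applied to the correct separated sets and at the correct scales — in the first part we feed a $c\delta$-separated set into $\wt{D}_{q,r}(2\delta)$, which is legitimate since $c\delta = \tfrac{c}{2}\cdot 2\delta$, and we must confirm that the smooth projection $\wt{P}_{\ta_{\om,2\delta}}$ is an exact reproducing operator on the relevant Fourier-support set, which is guaranteed by the choice $c = 100\sqrt{d}$ made at the start of Section \ref{red} together with the fact that convolution against $\wh{\phi_{[0,\delta^{-2}]}}$ only enlarges the $\eta$-support by $O(\delta^2)$.
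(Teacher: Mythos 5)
Your second inequality is exactly the paper's argument: tile time into length-$\delta^{-2}$ intervals, apply the localized constant on each tile, use Minkowski's inequality in the form $\ell^2(\ell^q)\hookrightarrow \ell^q(\ell^2)$ (this is where $q\geq 2$ and the $\lsm_q$ enter), and finish with the bounded overlap $\sum_{I}\phi_I^q\lsm 1$. That half is correct as written.

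In the first inequality there is a concrete misstep in the separation bookkeeping, and it is precisely the point you flagged and then waved away. The constant $\wt{D}_{q,r}(2\delta)$ is by definition a supremum over sets that are $c\cdot(2\delta)=2c\delta$-separated; your $\Xi'$ is only $c\delta$-separated, and $c\delta$-separation is \emph{not} the hypothesis ``$c\delta'$-separated at scale $\delta'=2\delta$'' --- that would require $2c\delta$-separation. So you cannot feed $\Xi'$ directly into $\wt{D}_{q,r}(2\delta,\Xi')$, and the step as written fails. The paper avoids this by applying the \emph{rough} decoupling constant $D_{q,r}(2\delta)$ instead: that constant is a supremum over merely $2\delta$-separated sets, which a $c\delta$-separated set certainly is (since $c=100\sqrt d\geq 2$), and the sharp projections $P_{\ta_{j,2\delta}}$ reproduce the pieces $(\wt{P}_{\ta_\om}f)\phi_{[0,\delta^{-2}]}$ exactly because their Fourier supports lie in $\ta^\ast_\om\subset\ta_{\om,2\delta}$ and are separated. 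One then invokes Proposition \ref{smoothrough} to pass from $D_{q,r}(2\delta)$ to $\wt{D}_{q,r}(2\delta)$; the separation issue is absorbed there by partitioning into $O(1)$ sufficiently separated subsets. Alternatively you could repair your version directly by pigeonholing $\Xi'$ into $O(1)$ many $2c\delta$-separated subfamilies, using the triangle inequality on the left and the trivial inclusion of square functions on the right. Everything else in your first part --- the comparison $1_{[0,\delta^{-2}]}\leq\phi_{[0,\delta^{-2}]}$, the $O(\delta^2)$ enlargement of the $\eta$-support under multiplication by $\phi_{[0,\delta^{-2}]}$, and the exact reproducing property on the separated supports --- matches the paper.
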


\begin{prop}\label{decrease}
If $1 \leq \wt{q} < q \leq \infty$, then $$\ov{D}_{\wt{q}, r}(\delta) \lsm_{\wt{q}, q} \ov{D}_{q, r}(\delta).$$
\end{prop}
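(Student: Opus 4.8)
The plan is to pass from the time exponent $q$ down to the smaller exponent $\wt q$ by pairing a crude H\"older inequality on the interval $I := [0,\delta^{-2}]$ with a \emph{reverse} H\"older estimate for each individual cap, the latter exploiting that $\wt{P}_{\ta_\om}f$ is locally constant in $t$ at the scale $\delta^{-2} = |I|$. Fix a $c\delta$-separated $\Xi'\subset[-1,1]^{d}$, let $f$ have Fourier transform in $\bigcup_{\om\in\Xi'}2\ta_\om$, and set $g := \sum_{\om\in\Xi'}\wt{P}_{\ta_\om}f$. H\"older in time on $I$ followed by the definition of $\ov{D}_{q,r}(\delta)$ gives
\begin{align*}
\nms{g}_{L^{\wt q}_t L^{r}_x(\R^d\times I)} \leq |I|^{\frac1{\wt q}-\frac1q}\nms{g}_{L^{q}_t L^{r}_x(\R^d\times I)} \leq |I|^{\frac1{\wt q}-\frac1q}\ov{D}_{q,r}(\delta)\Big(\sum_{\om\in\Xi'}\nms{\wt{P}_{\ta_\om}f}_{L^{q}_t L^{r}_x(\R^d\times\phi_I)}^{2}\Big)^{1/2}.
\end{align*}
So it suffices to prove, for each $\om$, the per-cap reverse H\"older bound
\begin{align}\label{capRH}
|I|^{\frac1{\wt q}-\frac1q}\nms{\wt{P}_{\ta_\om}f}_{L^{q}_t L^{r}_x(\R^d\times\phi_I)} \lsm_{\wt q,q}\nms{\wt{P}_{\ta_\om}f}_{L^{\wt q}_t L^{r}_x(\R^d\times\phi_I)};
\end{align}
taking the $\ell^{2}$ sum over $\om\in\Xi'$ then yields $\ov{D}_{\wt q,r}(\delta,\Xi')\lsm_{\wt q,q}\ov{D}_{q,r}(\delta)$, and a supremum over $\Xi'$ finishes the proof.

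To prove \eqref{capRH} I would set $\Phi_\om(t) := \nms{\wt{P}_{\ta_\om}f(\cdot,t)}_{L^{r}_x(\R^d)}$, so that \eqref{capRH} reads $\nms{\Phi_\om\phi_I}_{L^{q}_t}\lsm_{\wt q,q}|I|^{\frac1q-\frac1{\wt q}}\nms{\Phi_\om\phi_I}_{L^{\wt q}_t}$, and first establish that $\Phi_\om$ is essentially constant at scale $\delta^{-2}$. For this, a modulation by $e(-(\om\cdot x + |\om|^{2}t))$ followed by the volume-preserving shear $(x,t)\mapsto(x - 2\om t,\,t)$ turns $\wt{P}_{\ta_\om}f$ into a function $F_\om$ with Fourier transform supported in $2\ta_\delta$; since this shear fixes the time coordinate and only translates the spatial one, $\nms{F_\om(\cdot,t)}_{L^{r}_x} = \Phi_\om(t)$ for every $t$. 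For each fixed $x$, the $t$-Fourier transform of $F_\om(x,\cdot)$ is then supported in $[-4d\delta^{2},4d\delta^{2}]$, so $F_\om(x,\cdot) = F_\om(x,\cdot)\ast\wc{\beta}$ for a fixed bump $\beta$ equal to $1$ on that interval with $\wc{\beta}$ an $L^{1}$-normalized function concentrated at scale $\delta^{-2}$ with rapidly decaying tails. Minkowski's integral inequality then yields the pointwise bound $\Phi_\om \leq \Phi_\om\ast\gamma_R$ with $R := \delta^{-2}$, where $\gamma_R(s)\lsm_{E'}R^{-1}(1+|s|/R)^{-E'}$ for any prescribed $E'$ and $\int\gamma_R\lsm 1$.

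Granting the local constancy $\Phi_\om\le\Phi_\om\ast\gamma_R$, \eqref{capRH} should follow from a routine argument: decompose $\R$ into intervals $J$ of length $R=|I|$; use local constancy to bound $\Phi_\om$ on $J$ by an average of $\Phi_\om$ over dilates of $J$ with rapidly decaying weights; apply H\"older at the single scale $R$ on each $J$, which costs only the factor $|J|^{\frac1q-\frac1{\wt q}}$; raise to the $q$-th power and sum over $J$; and combine the embedding $\ell^{\wt q}\hookrightarrow\ell^{q}$ (valid since $\wt q<q$) with a discrete Young/Schur estimate for the convolution against the rapidly decaying kernel. The slow variation and rapid decay of $\phi_I$ at scale $R$ let one reattach the weight on both sides. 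The case $q=\infty$ is handled the same way with Jensen's inequality in place of the discrete embedding.

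The hard part is the per-cap estimate \eqref{capRH}, and within it the fact that $\Phi_\om$ is locally constant at the \emph{full} scale $\delta^{-2}$ rather than only at scale $\delta^{-1}$: the axis-parallel $\eta$-width of $\ta_\om$ is only $O(\delta)$, so without the modulation-and-shear one would only get local constancy at scale $\delta^{-1}$, which is too weak and would reintroduce a power of $\delta$ in \eqref{capRH}. The remaining bookkeeping with the Schwartz tails of $\phi_I$ and the discrete convolution is the most tedious piece but is entirely standard, and the shear is essentially the only place the shape of $\ta_\om$ is used.
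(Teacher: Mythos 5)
Your reduction (H\"older in time on $[0,\delta^{-2}]$, the definition of $\ov{D}_{q,r}(\delta)$, then a per-cap reverse H\"older bound, which is exactly \eqref{proptarget2}) follows the paper's proof, and your modulation-plus-shear observation --- that after removing the phase $e(\om\cdot x+|\om|^{2}t)$ and shearing $(x,t)\mapsto(x-2\om t,t)$ the Fourier support lies in $2\ta_{\delta}$, whose $\eta$-thickness is $O(\delta^{2})$, so the norm function is locally constant at scale $\delta^{-2}$ --- is the same geometric fact the paper encodes via the dual tube of temporal length $\delta^{-2}$ pointing in the direction $(-2\om,1)$. The gap is in your last step, where you reattach the weight. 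You prove $\Phi_{\om}\le\Phi_{\om}\ast\gamma_{R}$ for the \emph{unweighted} norm function and then invoke ``slow variation and rapid decay of $\phi_{I}$ at scale $R$'' to move $\phi_{I}$ in and out of the interval decomposition. But the paper's $\phi$ is only assumed to satisfy $1_{[-1,1]}\le\phi\lsm_{E}(1+|t|)^{-E}$ with $\supp\wh{\phi}\subset[-1,1]$: there is no lower bound, hence no doubling or slow-variation property, for $\phi$ away from $[-1,1]$ (being entire of exponential type it can vanish at points and can oscillate between a polynomial-size envelope and something far smaller on unit intervals). Consequently, when you bound $\sup_{J}\Phi_{\om}$ by averages of $\Phi_{\om}$ over intervals $J'$ and then need to trade $\nms{\Phi_{\om}}_{L^{\wt{q}}(J')}$ for $\nms{\Phi_{\om}\phi_{I}}_{L^{\wt{q}}(J')}$ while multiplying by $\sup_{J}\phi_{I}$, the required ratio of values of $\phi_{I}$ on nearby intervals is not polynomially controlled, and the rapidly decaying factor $(1+d(J,J')/R)^{-E'}$ cannot be guaranteed to absorb it. As described, this step does not close.

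The fix is small and stays inside your framework: run the modulation-and-shear argument on the \emph{weighted} function $\wt{P}_{\ta_{\om}}f\,\phi_{[0,\delta^{-2}]}$ instead of $\wt{P}_{\ta_{\om}}f$. Since $\wh{\phi_{[0,\delta^{-2}]}}$ is supported in $[-\delta^{2},\delta^{2}]$ (this is precisely why $\phi$ was chosen with compactly supported Fourier transform), the product has Fourier support in the slab $\ta_{\om}^{\ast}$ of \eqref{convsupport}, so the scalar function $\Psi(t):=\phi_{[0,\delta^{-2}]}(t)\,\nms{\wt{P}_{\ta_{\om}}f(\cdot,t)}_{L^{r}_{x}}$ itself satisfies $\Psi\le\Psi\ast\gamma_{R}$ with $R=\delta^{-2}$. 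Then no weight reattachment or interval decomposition is needed: H\"older gives $\nms{\Psi}_{L^{\infty}_{t}}\le\nms{\Psi\ast\gamma_{R}}_{L^{\infty}_{t}}\lsm R^{-1/\wt{q}}\nms{\Psi}_{L^{\wt{q}}_{t}}$, and $\nms{\Psi}_{L^{q}_{t}}\le\nms{\Psi}_{L^{\infty}_{t}}^{1-\wt{q}/q}\nms{\Psi}_{L^{\wt{q}}_{t}}^{\wt{q}/q}\lsm R^{1/q-1/\wt{q}}\nms{\Psi}_{L^{\wt{q}}_{t}}$, which is \eqref{proptarget2} and also covers $q=\infty$. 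This repaired argument is, in substance, the paper's: there one multiplies by $\phi_{[0,\delta^{-2}]}$ first, writes the weighted function as its own convolution with the reproducing kernel $\wc{\chi}^{\ast}_{\om}$ adapted to $\ta_{\om}^{\ast}$, and applies the mixed-norm Young inequality with the intermediate time exponent $s$.
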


Combining these two propositions gives the corollary:
\begin{cor}\label{globaldec}
If $2 \leq \wt{q} < q \leq \infty$, then $$\wt{D}_{\wt{q}, r}(\delta) \lsm_{\wt{q}, q} \wt{D}_{q, r}(2 \delta).$$
\end{cor}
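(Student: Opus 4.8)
The plan is to chain together Propositions \ref{equiv} and \ref{decrease}, keeping careful track of which range of exponents each is applied in. Throughout, the spatial exponent $r \in [1, \infty]$ plays no role beyond being fixed, so I suppress it from the discussion.

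First, since $\wt{q} \geq 2$, the pair $(\wt{q}, r)$ lies in $[2, \infty] \times [1, \infty]$, so the second inequality in Proposition \ref{equiv} applies and gives
\begin{align*}
\wt{D}_{\wt{q}, r}(\delta) \lsm_{\wt{q}} \ov{D}_{\wt{q}, r}(\delta).
\end{align*}
Next, since $1 \leq \wt{q} < q \leq \infty$, Proposition \ref{decrease} upgrades the time exponent on the time-localized constants:
\begin{align*}
\ov{D}_{\wt{q}, r}(\delta) \lsm_{\wt{q}, q} \ov{D}_{q, r}(\delta).
\end{align*}
Finally, since $q \geq 2$, the pair $(q, r)$ again lies in $[2, \infty] \times [1, \infty]$, so the first inequality in Proposition \ref{equiv} gives
\begin{align*}
\ov{D}_{q, r}(\delta) \lsm \wt{D}_{q, r}(2\delta).
\end{align*}
Composing these three estimates yields $\wt{D}_{\wt{q}, r}(\delta) \lsm_{\wt{q}, q} \wt{D}_{q, r}(2\delta)$, which is the assertion of the corollary.

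There is essentially no obstacle in the deduction itself once the two propositions are available; all of the substance has been pushed into them. The only point that requires attention is the bookkeeping: the reverse-H\"{o}lder gain must be carried out entirely at the level of the time-localized constants $\ov{D}$ (which is why Proposition \ref{decrease} is phrased for $\ov{D}$ rather than $\wt{D}$), and the two passages between $\wt{D}$ and $\ov{D}$ are legitimate precisely because they occur at the exponents $\wt{q}$ and $q$, both of which are at least $2$ as required by the hypothesis of Proposition \ref{equiv}; the mild loss $\delta \rightsquigarrow 2\delta$ in the final step is harmless for the intended application since it will be iterated only boundedly often or absorbed into a $\delta^{-\vep}$.
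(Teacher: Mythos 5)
Your proposal is correct and is exactly the argument the paper intends: the paper simply states that the corollary follows by "combining these two propositions," and your three-step chain (second half of Proposition \ref{equiv} at exponent $\wt{q}$, then Proposition \ref{decrease}, then the first half of Proposition \ref{equiv} at exponent $q$) is that combination, with the hypotheses $\wt{q}, q \geq 2$ and $\wt{q} < q$ checked at the right places.
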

Another way to interpret Corollary \ref{globaldec} in terms of Figure \ref{mixedfig}
is that if we know $\wt{D}_{q, r}(\delta) \lsm_{q, r, \vep} \delta^{-\vep}$ for some point
$(\frac{1}{r}, \frac{1}{q})$ (and $\frac{1}{q} \in (0, \frac{1}{2})$), 
then we know the estimate 
$\wt{D}_{\wt{q}, r}(\delta) \lsm_{q, \wt{q}, r, \vep} \delta^{-\vep}$
for all points $(\frac{1}{r}, \frac{1}{\wt{q}})$ above $(\frac{1}{r}, \frac{1}{q})$.

Applying this to $\wt{D}_{\frac{2(d + 2)}{d}, \frac{2(d + 2)}{d}}(\delta) \lsm_{\vep} \delta^{-\vep}$ then shows that $\wt{D}_{2, \frac{2(d + 2)}{d}}(\delta) \lsm_{\vep} \delta^{-\vep}$.
Interpolating with every point on the line $\frac{1}{q} = \frac{1}{r}$, then shows that
$\wt{D}_{q, r}(\delta) \lsm_{q, r, \vep} \delta^{-\vep}$ for all $(\frac{1}{r}, \frac{1}{q})$
in the closed triangle bounded by the lines $\frac{1}{q} = \frac{1}{2}$, $\frac{1}{r} = \frac{d}{2(d + 2)}$, and $\frac{1}{q} = \frac{1}{r}$.
Combining this observation with Corollary \ref{strichartzline}, then shows that if $(q, r)$
satisfy \eqref{qrregion}, then $\wt{D}_{q, r}(\delta) \lsm_{q, r, \vep} \delta^{-\vep}$,
finishing the proof that the constraints in \eqref{qrregion} are sufficient.
Thus it remains to prove Proposition \ref{equiv} and Proposition \ref{decrease}.

\subsubsection{Proof of Proposition \ref{equiv}}
We first prove that global decoupling implies time localized decoupling.

\begin{lemma}%\label{git}
For $1 \leq q, r \leq \infty$, we have $$\ov{D}_{q, r}(\delta) \lsm D_{q, r}(2\delta) \lsm \wt{D}_{q, r}(2 \delta).$$
\end{lemma}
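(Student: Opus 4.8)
The plan is to deal with the two inequalities separately. The second, $D_{q,r}(2\delta)\lsm\wt{D}_{q,r}(2\delta)$, is exactly Proposition \ref{smoothrough}, so all the work goes into $\ov{D}_{q,r}(\delta)\lsm D_{q,r}(2\delta)$. Fix a $c\delta$-separated $\Xi'\subset[-1,1]^d$ and an $f$ with $\wh{f}$ supported in $\bigcup_{\om\in\Xi'}2\ta_\om$. First I would replace the sharp time cutoff on the left-hand side of the definition of $\ov{D}_{q,r}(\delta,\Xi')$ by the Schwartz weight: since $\phi\ge 1_{[-1,1]}\ge 0$ and, in the coordinates in which $\phi_{[0,\delta^{-2}]}(t)=\phi(\delta^2 t-1/2)$, the set $[0,\delta^{-2}]$ becomes $[-1/2,1/2]$, one has $1_{[0,\delta^{-2}]}\le\phi_{[0,\delta^{-2}]}$ pointwise, so that
\begin{align*}
\nms{\sum_{\om\in\Xi'}\wt{P}_{\ta_\om}f}_{L^q_tL^r_x(\R^d\times[0,\delta^{-2}])}\le\nms{\sum_{\om\in\Xi'}\phi_{[0,\delta^{-2}]}\wt{P}_{\ta_\om}f}_{L^q_tL^r_x(\R^d\times\R)}.
\end{align*}

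The crucial point — and the step to be most careful about — is that $g:=\phi_{[0,\delta^{-2}]}\sum_{\om\in\Xi'}\wt{P}_{\ta_\om}f$ is still frequency-localized near the paraboloid, merely at the coarser scale $2\delta$. Multiplying by $\phi_{[0,\delta^{-2}]}$, a function of $t$ alone, convolves the Fourier transform in the $\eta$-variable only with a bump supported in $\{|\eta|\le\delta^2\}$, because $\supp\wh{\phi}\subset[-1,1]$ and $\phi_{[0,\delta^{-2}]}$ lives at time-scale $\delta^{-2}$. Since $M_\om$ fixes the $\eta$-axis, \eqref{taqtad} (applied at both scales $\delta$ and $2\delta$, noting that $M_\om$ does not depend on $\delta$) gives
\begin{align*}
2\ta_{\om,\delta}+(\{0\}^d\times[-\delta^2,\delta^2])=(\om,|\om|^2)+M_\om\big(2\ta_{0,\delta}+(\{0\}^d\times[-\delta^2,\delta^2])\big)\subset(\om,|\om|^2)+M_\om\ta_{0,2\delta}=\ta_{\om,2\delta},
\end{align*}
the inclusion being the elementary fact that $\{|\xi_i|\le 2\delta,\,|\eta|\le 4d\delta^2+\delta^2\}\subset\{|\xi_i|\le 2\delta,\,|\eta|\le 8d\delta^2\}$ when $d\ge 1$. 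Hence $\wh{g}$ is supported in $\bigcup_{\om\in\Xi'}\ta_{\om,2\delta}$. This is the one genuinely new ingredient: the single factor of $2$ in the scale is exactly enough room to absorb the $O(\delta^2)$ thickening of the $\eta$-support produced by the time truncation.

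It then remains to run $g$ through the rough decoupling constant at scale $2\delta$. Since $c=100\sqrt d$, the boxes $\{\ta_{\om,2\delta}\}_{\om\in\Xi'}$ are pairwise disjoint (they sit inside the pairwise-disjoint $4\ta_\om$), so for each $\om\in\Xi'$ one has $P_{\ta_{\om,2\delta}}g=\phi_{[0,\delta^{-2}]}\wt{P}_{\ta_\om}f$; moreover $\Xi'$ is a $2\delta$-separated subset of $[-1,1]^d$. Applying the definition of $D_{q,r}(2\delta,\Xi')$ to $g$, then the bound $D_{q,r}(2\delta,\Xi')\le D_{q,r}(2\delta)$, and recognizing $\nms{\phi_{[0,\delta^{-2}]}\wt{P}_{\ta_\om}f}_{L^q_tL^r_x(\R^d\times\R)}=\nms{\wt{P}_{\ta_\om}f}_{L^q_tL^r_x(\R^d\times\phi_{[0,\delta^{-2}]})}$ from the definition of the weighted mixed norm, I would conclude $\ov{D}_{q,r}(\delta,\Xi')\lsm D_{q,r}(2\delta)$, and then take the supremum over all $c\delta$-separated $\Xi'$. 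I do not expect any real obstacle beyond the support computation above; the cases $q=\infty$ or $r=\infty$ need only the usual trivial modifications.
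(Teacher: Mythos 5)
Your proposal is correct and follows essentially the same route as the paper: reduce the second inequality to Proposition \ref{smoothrough}, dominate the sharp time cutoff by $\phi_{[0,\delta^{-2}]}$, observe that multiplying by $\phi_{[0,\delta^{-2}]}$ thickens the $\eta$-support by only $O(\delta^2)$ so that each piece remains inside $\ta_{\om,2\delta}$, and then feed the resulting function through $D_{q,r}(2\delta,\Xi')$ using the disjointness of the $\ta_{\om,2\delta}$ to identify the projections. Your explicit computation via the linearity of $M_\om$ and the fact that it fixes the $\eta$-axis is just a cleaner write-up of the paper's containment $2\ta_{\om,\delta}+(\{0\}\times[-\delta^2,\delta^2])\subset\ta^{\ast}_{\om}\subset\ta_{\om,2\delta}$.
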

\begin{proof}
It suffices to just prove the first inequality as the second inequality follows from
Proposition \ref{smoothrough}.
Fix an arbitrary $c \delta$-separated subset $\Xi'$. Since $\phi_{[0, \delta^{-2}]} \geq 1_{[0, \delta^{-2}]}$, we have
\begin{align}\label{giteq1}
\nms{\sum_{\om \in \Xi'} \wt{P}_{\ta_{\om}}f}_{L^{q}_{t}L^{r}_{x}(\R^d \times [0, \delta^{-2}])} \leq \nms{\sum_{\om \in \Xi'}(\wt{P}_{\ta_{\om}}f)\phi_{[0, \delta^{-2}]}}_{L^{q}_{t}L^{r}_{x}(\R^d \times \R)}.
\end{align}
Since $\phi_{[0, \delta^{-2}]}(t)$ is Fourier supported in a $\delta^2$ neighborhood
of the origin, $(\wt{P}_{\ta_{\om}}f)(x, t)\phi_{[0, \delta^{-2}]}(t)$ is Fourier supported in $2\ta_{\om, \delta} + (\{0\} \times [-\delta^2, \delta^2])$ which is contained in the
parallelpiped,
\begin{align}\label{convsupport}
\ta^{\ast}_{\om} := \{(\xi_1, \ldots, \xi_{d}, \eta) \in \R^{d + 1}: |\xi_i - \om_i| \leq 2\delta \text{ for } i = 1, 2, \ldots, d, |\eta - 2\om \cdot (\xi -\om) - |\om|^2| \leq 5 d\delta^2\}.
\end{align}
This in turn is contained in $\ta_{\om, 2\delta}$ and hence
$\sum_{\om \in \Xi'}(\wt{P}_{\ta_{\om}}f)\phi_{[0, \delta^{-2}]}$ has Fourier transform
supported in $\bigcup_{j \in \Xi'}\ta_{j, 2\delta}$. 
Since $\Xi'$ is also a $2\delta$-separated set, applying the definition of $D_{q, r}(2\delta, \Xi')$ from \eqref{rseq1} gives that the right hand side of \eqref{giteq1} is equal to
\begin{align}
\begin{aligned}\label{sharpapp}
\|\sum_{j \in \Xi'}P_{\ta_{j, 2\delta}}(\sum_{\om \in \Xi'}(\wt{P}_{\ta_{\om}}f)&\phi_{[0, \delta^{-2}]})\|_{L^{q}_{t}L^{r}_{x}(\R^d \times \R)}\\
&\leq D_{q, r}(2\delta, \Xi')(\sum_{j \in \Xi'}\nms{P_{\ta_{j, 2\delta}}(\sum_{\om \in \Xi'}(\wt{P}_{\ta_{\om}}f)\phi_{[0, \delta^{-2}]})}_{L^{q}_{t}L^{r}_{x}(\R^d \times \R)}^{2})^{1/2}.
\end{aligned}
\end{align}
But now, given a $\ta_{j, 2\delta}$, from \eqref{convsupport} and that $\Xi'$ is $c\delta$-separated, we have that
\begin{align*}
P_{\ta_{j, 2\delta}}(\sum_{\om \in \Xi'}(\wt{P}_{\ta_{\om}}f)\phi_{[0, \delta^{-2}]}) = (\wt{P}_{\ta_{j}}f)\phi_{[0, \delta^{-2}]}
\end{align*}
since we cannot both have $\ta_{j, 2\delta} \cap \ta^{\ast}_{\om} \neq \emptyset$
and $|j - \om| \geq c\delta$ by our choice of $c$. Inserting the above into \eqref{sharpapp}
then shows that $\ov{D}_{q, r}(\delta, \Xi') \leq D_{q, r}(2\delta, \Xi') \leq D_{q, r}(2\delta)$. Taking the supremum over all $c\delta$-separated $\Xi'$ then completes the
proof.
\end{proof}

Next, we need to prove that time localized decoupling implies global decoupling.
\begin{lemma}%\label{tig}
If $1 \leq r \leq \infty$ and $2 \leq q \leq \infty$, then $$\wt{D}_{q, r}(\delta) \lsm_{q} \ov{D}_{q, r}(\delta).$$
\end{lemma}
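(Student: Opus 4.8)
The plan is to tile the time line by intervals of length $\delta^{-2}$ --- the natural dual scale to the $\eta$-extent $\delta^2$ of the boxes $\ta_{\om}$ --- apply the time-localized estimate $\ov{D}_{q, r}(\delta, \Xi')$ on each tile, and then reassemble. Assume first $q < \infty$. Fix a $c\delta$-separated set $\Xi'$ and $g$ with $\wh{g}$ supported in $\bigcup_{\om \in \Xi'}2\ta_{\om}$; we may assume the right-hand side of the claimed inequality is finite. Write $\R = \bigcup_{k \in \mathbb{Z}}I_k$ with $I_k := [k\delta^{-2}, (k + 1)\delta^{-2}]$. Since the $L^{q}_{t}L^{r}_{x}$ norm raised to the power $q$ is additive in the time variable,
\[
\nms{\sum_{\om \in \Xi'}\wt{P}_{\ta_{\om}}g}_{L^{q}_{t}L^{r}_{x}(\R^d \times \R)}^{q} = \sum_{k \in \mathbb{Z}}\nms{\sum_{\om \in \Xi'}\wt{P}_{\ta_{\om}}g}_{L^{q}_{t}L^{r}_{x}(\R^d \times I_k)}^{q}.
\]
For each $k$, translation in time by $k\delta^{-2}$ commutes with every $\wt{P}_{\ta_{\om}}$ and preserves the Fourier support condition (it only multiplies $\wh{g}$ by a unimodular factor in $\eta$), so applying the definition of $\ov{D}_{q, r}(\delta, \Xi')$ to $g(\cdot, \cdot + k\delta^{-2})$ followed by a change of variables gives
\[
\nms{\sum_{\om \in \Xi'}\wt{P}_{\ta_{\om}}g}_{L^{q}_{t}L^{r}_{x}(\R^d \times I_k)} \leq \ov{D}_{q, r}(\delta, \Xi')\Big(\sum_{\om \in \Xi'}\nms{\wt{P}_{\ta_{\om}}g}_{L^{q}_{t}L^{r}_{x}(\R^d \times \phi_{I_k})}^{2}\Big)^{1/2},
\]
where $\phi_{I_k}$ is the Schwartz weight attached to $I_k$ as in the notation section.

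Next I would raise this to the power $q$, sum over $k$, and interchange the order of summation. Writing $a_{k, \om} := \nms{\wt{P}_{\ta_{\om}}g}_{L^{q}_{t}L^{r}_{x}(\R^d \times \phi_{I_k})}$ and using $q/2 \geq 1$ --- this is the one place the hypothesis $q \geq 2$ is used --- Minkowski's inequality in $\ell^{q/2}$ in the variable $k$ gives
\[
\sum_{k}\Big(\sum_{\om \in \Xi'}a_{k, \om}^{2}\Big)^{q/2} \leq \Big(\sum_{\om \in \Xi'}\Big(\sum_{k}a_{k, \om}^{q}\Big)^{2/q}\Big)^{q/2}.
\]
Finally, since $\phi$ is Schwartz and the $I_k$ tile $\R$ into intervals of equal length $\delta^{-2}$, one has $\sum_{k}\phi_{I_k}(t)^{q} \lsm 1$ uniformly in $t$, so for each fixed $\om$,
\[
\sum_{k}a_{k, \om}^{q} = \int_{\R}\Big(\int_{\R^d}|\wt{P}_{\ta_{\om}}g(x, t)|^{r}\, dx\Big)^{q/r}\sum_{k}\phi_{I_k}(t)^{q}\, dt \lsm \nms{\wt{P}_{\ta_{\om}}g}_{L^{q}_{t}L^{r}_{x}(\R^d \times \R)}^{q}.
\]
Chaining the last three displays and taking $q$-th roots yields, for this fixed $\Xi'$, the bound $\nms{\sum_{\om}\wt{P}_{\ta_{\om}}g}_{L^{q}_{t}L^{r}_{x}} \lsm_{q} \ov{D}_{q, r}(\delta, \Xi')(\sum_{\om}\nms{\wt{P}_{\ta_{\om}}g}_{L^{q}_{t}L^{r}_{x}}^{2})^{1/2}$, and taking the supremum over all $c\delta$-separated $\Xi'$ gives $\wt{D}_{q, r}(\delta) \lsm_{q} \ov{D}_{q, r}(\delta)$. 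The case $q = \infty$ is identical with each $\sum_{k}$ replaced by $\sup_{k}$ and the bound $\sum_{k}\phi_{I_k}^{q} \lsm 1$ replaced by $\sup_{k}\phi_{I_k} \lsm 1$.

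This is the routine procedure for globalizing a time-localized decoupling inequality, so I do not expect a serious obstacle; the steps requiring the most care are the bookkeeping ones. First, one must translate the function $g$ rather than the interval $I_k$, so as to stay inside the fixed definition of $\ov{D}_{q, r}(\delta, \Xi')$, and check that this time translation leaves the Fourier support hypothesis intact. Second, the reassembly genuinely needs $q \geq 2$ (to push the $\ell^{q/2}$ time-sum inside the $\ell^1$ sum in $\om$ via Minkowski) and $q < \infty$ (to control $\sum_{k}\phi_{I_k}^{q}$ using the Schwartz decay of $\phi$), which is exactly the restriction in the statement.
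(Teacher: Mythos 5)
Your proposal is correct and follows essentially the same route as the paper: tile time into $\delta^{-2}$-intervals, apply the localized constant on each tile, interchange the $\ell^{q/2}_k$ and $\ell^1_{\om}$ sums via Minkowski using $q \geq 2$, and reassemble with $\sum_k \phi_{I_k}^q \lsm 1$. The only difference is that you make explicit the time-translation needed to reduce each tile to $[0,\delta^{-2}]$, which the paper leaves implicit.
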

\begin{proof}
Fix an arbitrary $c\delta$-separated subset $\Xi'$ of $[-1, 1]^d$. Partition $\R$ into intervals of length $\delta^{-2}$. Denote this collection of intervals by
$\mc{P}_{\delta^{-2}}(\R)$. Since $1_{I} \leq \phi_{I}$, we can write
\begin{align*}
\nms{\sum_{\om \in \Xi'}\wt{P}_{\ta_{\om}}f}_{L^q_t L^r_x (\R^d \times \R)} &= (\sum_{I\in\mc{P}_{\delta^{-2}}(\R)} \int_I (\int_{\R^{d}}|\sum_{\om \in \Xi'}\wt{P}_{\ta_{\om}}f|^r dx)^{q/r}dt)^{1/q}\\
&\leq \ov{D}_{q, r}(\delta) (\sum_{I\in \mc{P}_{\delta^{-2}}(\R)} (\sum_{\om\in \Xi'} \nms{\wt{P}_{\ta_{\om}}f}_{L^q_{t}L^r_x(\R^d\times \phi_I)}^2 )^{q/2})^{1/q}.
\end{align*}
Using that $q \geq 2$, by Minkowski's inequality, we can interchange
the $l^{q}_{I}$ and $l^2_{\om}$ norms. Using the definition of $L^{q}_{t}L^{r}_{x}(\R^d \times \phi_{I})$ and observing that
$\sum_{I \in \mc{P}_{\delta^{-2}}(\R)}\phi_{I}^{q} \leq \sum_{I \in \mc{P}_{\delta^{-2}}(\R)}\phi_{I} \lsm 1$ gives that the above is
\begin{align*}
\lsm_{q} \ov{D}_{q, r}(\delta)(\sum_{\om \in \Xi'}\nms{\wt{P}_{\ta_{\om}}f}_{L^{q}_{t}L^{r}_{x}(\R^d \times \R)}^{2})^{1/2}.
\end{align*}
Therefore $\wt{D}_{q, r}(\delta, \Xi') \lsm_{q} \ov{D}_{q, r}(\delta)$.
Taking the supremum over all $\Xi'$ the completes the proof.
\end{proof}

\subsubsection{Proof of Proposition \ref{decrease}}
Applying H\"{o}lder's inequality in time followed by the definition of $\ov{D}_{q, r}(\delta)$, we have
\begin{align*}
\nms{\sum_{\om \in \Xi'}\wt{P}_{\ta_{\om}}f}_{L^{\wt{q}}_{\#, t}L^{r}_{x}(\R^d \times [0, \delta^{-2}])} &\leq \nms{\sum_{\om \in \Xi'}\wt{P}_{\ta_{\om}}f}_{L^{q}_{\#, t}L^{r}_{x}(\R^d \times [0, \delta^{-2}])}\\
&\leq \ov{D}_{q, r}(\delta)(\sum_{\om \in \Xi'}\nms{\wt{P}_{\ta_{\om}}f}_{L^{q}_{\#, t}L^{r}_{x}(\R^d \times \phi_{[0, \delta^{-2}]})}^{2})^{1/2}.
\end{align*}
To finish the proof of Proposition \ref{decrease}, it now remains to show that
\begin{align}\label{proptarget2}
\nms{\wt{P}_{\ta_{\om}}f}_{L^{q}_{\#, t}L^{r}_{x}(\R^d \times \phi_{[0, \delta^{-2}]})} \lsm_{q, \wt{q}}\nms{\wt{P}_{\ta_{\om}}f}_{L^{\wt{q}}_{\#, t}L^{r}_{x}(\R^d \times \phi_{[0, \delta^{-2}]})}
\end{align}
and then apply the definition of $\ov{D}_{\wt{q}, r}(\delta, \Xi')$
followed by taking the supremum over all $\Xi'$.
The validity of \eqref{proptarget2} is 
essentially because of the uncertainty principle, $\wt{P}_{\ta_{\om}}f$ is Fourier supported on time intervals of length $\delta^2$ and thus 
locally constant on time intervals of length $\delta^{-2}$.
The Fourier support of $\wt{P}_{\ta_{\om}}f(x, t)\phi_{[0, \delta^{-2}]}(t)$ is contained in $\ta_{\om}^{\ast}$ as defined in \eqref{convsupport}.
Let $\chi_{\om}^{\ast}$ be a Schwartz function which is $1$ on $\ta_{\om}^{\ast}$ and vanishes
outside $2\ta_{\om}^{\ast}$. Then
$\wt{P}_{\ta_{\om}}f(x, t)\phi_{[0, \delta^{-2}]}(t) = (\wt{P}_{\ta_{\om}}f\phi_{[0, \delta^{-2}]} \ast \wc{\chi}^{\ast}_{\om})(x, t)$.
Young's inequality gives that the left hand side of \eqref{proptarget2} is
\begin{align*}
\delta^{2/q}\nms{\wt{P}_{\ta_{\om}}f\phi_{[0, \delta^{-2}]} \ast \wc{\chi}^{\ast}_{\om}}_{L^{q}_{t}L^{r}_{x}(\R^d \times \R)} &\leq \delta^{2/q}\nms{\wt{P}_{\ta_{\om}}f\phi_{[0, \delta^{-2}]}}_{L^{\wt{q}}_{t}L^{r}_{x}(\R^d \times \R)}\nms{\wc{\chi}^{\ast}_{\om}}_{L^{s}_{t}L^{1}_{x}(\R^d \times \R)}\\
&= (\delta^{2/\wt{q}}\nms{\wt{P}_{\ta_{\om}}f\phi_{[0, \delta^{-2}]}}_{L^{\wt{q}}_{t}L^{r}_{x}(\R^d \times \R)})(\delta^{2/q - 2/\wt{q}}\nms{\wc{\chi}^{\ast}_{\om}}_{L^{s}_{t}L^{1}_{x}(\R^d \times \R)})
\end{align*}
where $1 + \frac{1}{q} = \frac{1}{s} + \frac{1}{\wt{q}}$. 
It is not hard to see that $|\wc{\chi}_{\om}^{\ast}| \sim \delta^{d + 2}$ on a
$O(\delta^{-1}) \times \cdots \times O(\delta^{-1}) \times O(\delta^{-2})$
tube ``dual" to $\ta_{\om}^{\ast}$, pointing in the direction $(-2\om, 1)$ centered at the origin, and decays
rapidly outside this tube (this for example can be formalized in the same manner as in see Proposition \ref{dualtgen}).
Therefore, we have that
$\delta^{2/q - 2/\wt{q}}\nms{\wc{\chi}^{\ast}_{\om}}_{L^{s}_{t}L^{1}_{x}(\R^d \times \R)} \sim_{q, \wt{q}} \delta^{2/q - 2/\wt{q}}\delta^{d + 2}\delta^{-d - 2/s} \lsm 1$. 
This proves \eqref{proptarget2} and concludes the proof of Proposition \ref{decrease}.
\section{Proof of Proposition \ref{full}}
We now prove Proposition \ref{full}.
The proof comes from interpolation.
To show \eqref{fulleq}, we will actually show that
$\wt{D}_{q, r}(\delta)$ is $\lsm_{\vep}$ the right hand side of \eqref{fulleq} and then appeal to
Proposition \ref{smoothrough}.
To show sharpness up to a $\delta^{-\vep}$ loss,
we will use \eqref{lowerboundsum}
and that $D_{q, r}(\delta) \geq \mb{D}_{q, r}(\delta)$.

\subsection{Case 1: $q \leq r$ and $r \geq \frac{2(d + 2)}{d}$}
Since in Proposition \ref{full}, we assumed
$q, r \geq 2$, both $\frac{d}{r} - \frac{d}{2}$
and $\frac{d}{q} - \frac{d}{2}$ are $\leq 0$.
Since $q \leq r$, $\frac{d}{2} - \frac{d}{r} - \frac{2}{q} \leq \frac{d}{2} - \frac{d}{r} - \frac{2}{r}$.
Finally, since $r \geq \frac{2(d + 2)}{d}$,
$\frac{d}{2} - \frac{d + 2}{r} \geq 0$.
Therefore \eqref{lowerboundsum} implies that
$\mb{D}_{q, r}(\delta) \gtrsim_{q, r} \delta^{-(\frac{d}{2} - \frac{d + 2}{r})}$ in this case.
It now remains to show that $\wt{D}_{q, r}(\delta) \lsm_{q, r, \vep} \delta^{-(\frac{d}{2} - \frac{d + 2}{r}) - \vep}$.

We consider two cases. First consider the subcase when $q \leq \frac{2(d + 2)}{d}$. For each $q \in [2, \frac{2(d + 2)}{d}]$, interpolate the estimate
$\wt{D}_{q, \frac{2(d + 2)}{d}}(\delta) \lsm_{q, \vep} \delta^{-\vep}$ from Theorem \ref{main} with the trivial $L^{q}_{t}L^{\infty}_{x}$ estimate $\wt{D}_{q, \infty}(\delta) \lsm \delta^{-d/2}$ which follows from
the triangle inequality and Cauchy-Schwarz.
Note here we have also made use of the first bullet point on Page \pageref{1stbullet}.
Let $\ta$ be the number such that
\begin{align*}
\frac{1}{r} = \frac{\ta}{\frac{2(d+2)}{d}} + \frac{1 - \ta}{\infty}.
\end{align*}
Interpolation from Proposition \ref{interpolate} gives
\begin{align*}
\wt{D}_{q, r}(\delta) \lsm_{q}\wt{D}_{q, \frac{2(d+2)}{d}}(\delta)^{\ta}\wt{D}_{q, \infty}(\delta)^{1 - \ta} \lsm_{q,r,\vep} (\delta^{-\vep})^\theta (\delta^{-d/2})^{1-\ta},
\end{align*}
and by plugging in $\theta=\frac{2(d+2)}{rd}$
we have
\begin{align*}
\frac{d}{2}(1 - \ta)= \frac{d}{2}(1 - \frac{2(d + 2)}{rd}) = \frac{d}{2} - \frac{d + 2}{r}
\end{align*}
which completes the estimate in this subcase.

Next consider the subcase when $q \geq \frac{2(d + 2)}{d}$.
In this case we interpolate the trivial
bound $\wt{D}_{q, \infty}(\delta) \lsm \delta^{-d/2}$
with the Bourgain-Demeter paraboloid estimate:
$\wt{D}_{q, q}(\delta) \lsm_{q, \vep} \delta^{-(\frac{d}{2} - \frac{d + 2}{q}) - \vep}$ for $\frac{2(d + 2)}{d} \leq q \leq \infty$.
Note here we have also used Proposition \ref{roughsmooth}.
Let $\ta'$ be the number such that
\begin{align*}
\frac{1}{r} = \frac{\ta'}{q} + \frac{1 - \ta'}{\infty}.
\end{align*} 
Interpolation then gives
\begin{align*}
\wt{D}_{q, r}(\delta) \lsm_{q} \wt{D}_{q, q}(\delta)^{\ta'}\wt{D}_{q, \infty}(\delta)^{1 - \ta'} \lsm_{q, r, \vep} (\delta^{-(\frac{d}{2} - \frac{d + 2}{q}) - \vep})^{\ta'}(\delta^{-d/2})^{1 - \ta'}.
\end{align*}
Plugging in $\ta' = q/r$ and observing that
\begin{align*}
(\frac{d}{2} - \frac{d + 2}{q})\ta' + \frac{d}{2}(1 - \ta') = (\frac{d}{2} - \frac{d + 2}{q})\frac{q}{r} + \frac{d}{2}(1 - \frac{q}{r}) = \frac{d}{2} - \frac{d + 2}{r}
\end{align*}
then completes the estimate in this subcase.
This completes the proof of the estimate when $q \leq r$ and $r \geq \frac{2(d + 2)}{d}$.

\subsection{Case 2: $q \geq r$ and $\frac{2}{q} + \frac{d}{r} \leq \frac{d}{2}$}
Since $q \geq r$, we have $\frac{d}{2} - \frac{d}{r} - \frac{2}{q} \geq \frac{d}{2} - \frac{d}{r} - \frac{2}{r}$.
Therefore \eqref{lowerboundsum} implies that
$\mb{D}_{q, r}(\delta) \gtrsim_{q, r} \delta^{-(\frac{d}{2} - \frac{d}{r} - \frac{2}{q})}$
in this case. It now remains to show that
$\wt{D}_{q, r}(\delta) \lsm_{q, r, \vep} \delta^{-(\frac{d}{2} - \frac{d}{r} - \frac{2}{q}) - \vep}$.

Let $s$ be such that $\frac{2}{q} + \frac{d}{s} = \frac{d}{2}$.
Since $q \geq r$, $\wt{D}_{q, s}(\delta)\lsm_{q, \vep} \delta^{-\vep}$ by Theorem \ref{main}.
We interpolate this with $\wt{D}_{q, q}(\delta) \lsm_{q, \vep} \delta^{-(\frac{d}{2} - \frac{d + 2}{q}) - \vep}$.
Let $\ta''$ be the number such that
\begin{align*}
\frac{1}{r} = \frac{\ta''}{q} + \frac{1 - \ta''}{s}.
\end{align*}
Interpolation then gives
\begin{align*}
\wt{D}_{q, r}(\delta) \lsm_{q} \wt{D}_{q, q}(\delta)^{\ta''}\wt{D}_{q, s}(\delta)^{1 - \ta''} \lsm_{q, r, \vep} (\delta^{-(\frac{d}{2} - \frac{d + 2}{q}) - \vep})^{\ta''}(\delta^{-\vep})^{1 - \ta''}.
\end{align*}
Then
\begin{align*}
(\frac{d}{2} - \frac{d + 2}{q})\ta'' = (\frac{2}{q} + \frac{d}{s} - \frac{d + 2}{q})\ta'' = d(\frac{1}{s} - \frac{1}{q})\ta'' = d(\frac{1}{s} - \frac{1}{r}) = \frac{d}{2}  - \frac{d}{r}-\frac{2}{q}
\end{align*}
where in the first and last equalities we have used the definition of $s$, in the third equality we have used the definition of $\ta''$.
This completes the proof of the esitmate when $q \geq r$ and $\frac{2}{q} + \frac{d}{r} \leq \frac{d}{2}$.

%\subsection*{Data Availability and Conflict of Interest Statement}
%Data sharing not applicable to this article as no datasets were generated or analysed during the current study.
%On behalf of all authors, the corresponding author states that there is no conflict of interest.

\bibliographystyle{amsplain}
\bibliography{mixed}

\providecommand{\bysame}{\leavevmode\hbox to3em{\hrulefill}\thinspace}
\providecommand{\MR}{\relax\ifhmode\unskip\space\fi MR }
% \MRhref is called by the amsart/book/proc definition of \MR.
\providecommand{\MRhref}[2]{%
  \href{http://www.ams.org/mathscinet-getitem?mr=#1}{#2}
}
\providecommand{\href}[2]{#2}
\begin{thebibliography}{10}

\bibitem{Barron}
Alex Barron, \emph{On global-in-time {S}trichartz estimates for the
  semiperiodic {S}chr\"{o}dinger equation}, Anal. PDE \textbf{14} (2021),
  no.~4, 1125--1152.

\bibitem{BP}
Agnes~Ilona Benedek and Rafael Panzone, \emph{The space {$L\sp{p}$}, with mixed
  norm}, Duke Math. J. \textbf{28} (1961), 301--324.

\bibitem{BBGL}
Jonathan Bennett, Neal Bez, Susana Guti\'{e}rrez, and Sanghyuk Lee,
  \emph{Estimates for the kinetic transport equation in hyperbolic {S}obolev
  spaces}, J. Math. Pures Appl. (9) \textbf{114} (2018), 1--28.

\bibitem{BD14}
Jean Bourgain and Ciprian Demeter, \emph{The proof of the {$l^2$} decoupling
  conjecture}, Ann. of Math. (2) \textbf{182} (2015), no.~1, 351--389.

\bibitem{BGT}
Nicolas Burq, Patrick G\'{e}rard, and Nikolay Tzvetkov, \emph{Strichartz
  inequalities and the nonlinear {S}chr\"{o}dinger equation on compact
  manifolds}, Amer. J. Math. \textbf{126} (2004), no.~3, 569--605.

\bibitem{KNS}
Shinya Kinoshita, Shohei Nakamura, and Akansha Sanwal, \emph{{D}ecoupling
  inequality for paraboloid under shell type restriction and its application to
  the periodic {Z}akharov system}, arXiv:2212.01805.

\bibitem{thesis}
Zane~Kun Li, \emph{Decoupling for the parabola and connections to efficient
  congruencing}, Ph.D. thesis, UCLA, 2019, available at
  \url{https://escholarship.org/uc/item/0cz3756c}.

\bibitem{247B}
Terence Tao, \emph{247{B}, {N}otes 2: {D}ecoupling theory}, available at
  \url{https://terrytao.wordpress.com/2020/04/13/247b-notes-2-decoupling-theory/}.

\bibitem{YZ}
Yunfeng Zhang, \emph{{O}n {F}ourier restriction type problems on compact {L}ie
  groups}, arXiv:2005.11451.

\bibitem{MO}
Fan Zheng, \emph{Decoupling in mixed norm spaces}, available at
  \url{https://mathoverflow.net/q/229658}.

\end{thebibliography}

\end{document}